\numberwithin{equation}{section}
\theoremstyle{plain}
\newtheorem{thm}{Theorem}[section]
\newtheorem{lem}[thm]{Lemma}
\newtheorem{prop}[thm]{Proposition}
\theoremstyle{definition}
\newtheorem*{acknowledgment*}{Acknowledgment}
\theoremstyle{remark}
	\def\MR#1{\href{http://www.ams.org/mathscinet-getitem?mr=#1}{MR-#1}}
	\def\DOI#1{\href{https://doi.org/#1}{doi:#1}}
\begin{document}
\title{Quasi-stationary distributions for subcritical superprocesses}
\author[R. Liu, Y.-X. Ren, R. Song and Z. Sun]{Rongli Liu, Yan-Xia Ren, Renming Song and Zhenyao Sun}
\address{Rongli Liu\\ School of Science\\ Beijing Jiaotong University\\ Beijing 100044\\ P. R. China}
\email{rlliu@bjtu.edu.cn}
\thanks{The research of Rongli Liu is supported in part by NSFC (Grant No. 11301261), and the Fundamental Research Funds for the Central Universities (Grant No.  2017RC007)}
\address{Yan-Xia Ren\\ LMAM School of Mathematical Sciences \& Center for
Statistical Science\\ Peking University\\ Beijing 100871\\ P. R. China}
\email{yxren@math.pku.edu.cn}
\thanks{The research of Yan-Xia Ren is supported in part by NSFC (Grant Nos. 11671017 and 11731009)  and LMEQF}
\address{Renming Song\\ Department of Mathematics\\ University of Illinois at Urbana-Champaign \\ Urbana \\ IL 61801\\ USA}
\email{rsong@illinois.edu}
\thanks{The research of Renming Song is supported in part by a grant from the Simons Foundation (\#429343, Renming Song)}
\address{Zhenyao Sun\\ Faculty of Industrial Engineering and Management \\ Technion, Isreal Institute of Technology \\ Haifa 3200003\\ Isreal}
\email{zhenyao.sun@gmail.com}
\begin{abstract}
    Suppose that $X$ is a subcritical superprocess.
	Under some asymptotic conditions on the mean semigroup of $X$, we prove the Yaglom limit of $X$ exists and identify all  quasi-stationary distributions of $X$.
\end{abstract}
\subjclass[2010]{Primary: 60J68; 60F05. Secondary 60J80; 60J25}
\keywords{Superprocess; Yaglom limit; Quasi-stationary distribution}
\maketitle
\section{Introduction}
\subsection{Background}\label{sec:BGD}
	Denote $\mathbb Z_+:= \{1,2,\cdots\}$ and $\mathbb N = \mathbb Z_+ \cup \{0\}$.
	Suppose that $Z=\{(Z_n)_{n\in \mathbb N}; (P_z)_{z\in \mathbb N}\}$
	is a Galton-Watson process with offspring distribution
		$(p_n)_{n\in \mathbb N}$.
	Let $m:=\sum^{\infty}_{n=1}np_n$ be the mean
	of the offspring distribution.
	 It is well known that when $m \leq 1$ and $p_1<1$,
	the process $Z$ becomes extinct in finite time almost surely, that is,
\[
	P_z(Z_n = 0 \text{ for some $n \in \mathbb N$} ) = 1, \quad z \in \mathbb N.
\]
	Let $\zeta:=\inf\{n\geq 0: Z_n=0\}$ be the extinction time of $Z$.
	If $\nu$ is a distribution on
	$\mathbb Z_+$ such that for any $z\in \mathbb Z_+$ and subset $A$ of $\mathbb Z_+$,
\[
	\lim_{n\rightarrow\infty} P_z\left(Z_n\in A\middle|\zeta>n \right)=\nu(A),
\]
	then we say that $\nu$ is the Yaglom limit of $Z$.
	Yaglom \cite{Yaglom47} showed that such limit exists when $m < 1$ and the offspring distribution has finite second moment.
	This was generalized to the case without the second moment assumption
	in \cite{Heathcote, Joffe1967On}.
	See also \cite[pp. 64--65]{AthreyaNey1972Branching} for an alternative analytical approach; and \cite{LyonsPemantlePeres1995Conceptual} for a probabilistic proof.
	If $\nu$ is a distribution on $\mathbb Z_+$ such that for any subset $A$ of $\mathbb Z_+$,
\[
	\sum_{z = 1}^\infty \nu(z) P_z\left(Z_n\in A\middle|\zeta>n \right)
	=\nu(A), \quad n \in \mathbb N,
\]
	then we say $\nu$ is a quasi-stationary distribution of $Z$.
	Hoppe and Seneta \cite{HoppeSeneta1978Analytical} studied the quasi-stationary distributions of $(Z_n)_{n\in \mathbb N}$.
	Recently, Maillard \cite{Maillard2018The} characterized all  $\lambda$-invariant measures of $(Z_n)_{n \in \mathbb N}$.
	If a $\lambda$-invariant measure is a probability measure, then it is equivalent to a quasi-stationary distribution.
	Multitype analogs for the Yaglom limit results can be found in
	\cite{Hoppe1975Stationary, HoppeSeneta1978Analytical, JoffeSpitzer1967On}.

	Now suppose that $Z =\{(Z_t)_{t \geq 0}; (P_x)_{x\geq 0}\}$  is a  continuous-state branching process on $[0,\infty)$ where $0$ is an absorbing state.  Let
    $\zeta:=\inf\{t\geq 0: Z_t=0\}$ be the extinction time of $Z$.
	If $\nu$ is a distribution on $(0,\infty)$ such that for any $x>0$ and Borel subset $A$ of $(0,\infty)$,
\[
\lim_{t\rightarrow\infty} P_x\left(Z_t\in A\middle|\zeta>t\right)=\nu(A),
\]
	then $\nu$ is called the Yaglom limit of $Z$.
	If $\nu$ is a distribution on $(0,\infty)$ such that for any Borel subset $A$ of $(0,\infty)$,
\[
\int_{(0,\infty)} \nu(dx) P_x (Z_t \in A | \zeta > t) = \nu (A),
	\quad t\geq 0,
\]
	then we say $\nu$ is a quasi-stationary distribution for $Z$.
	The Yaglom limits of continuous-state branching processes were studied in \cite{Li00}, where conditioning of the type $\{\zeta>t+r\}$ for any finite $r>0$
    instead of $\{\zeta>t\}$ was also considered.
	Lambert \cite{Lambert2007Quasi-stationary} also studied Yaglom limits using a different method, and characterized all the quasi-stationary distributions  for $Z$.
	Seneta and Vere-Jones \cite{SenetaVere-Jones1968On} studied some similar type of conditional limits for discrete-time continuous-state branching processes.
	Recently \cite{Labbe2013Quasi-stationary} considered quasi-stationary distributions for continuous-state branching processes conditioned on non-explosion.

	Asmussen and Hering \cite{AH} studied limit behaviors of subcritical branching Markov processes.
	They proved that the Yaglom limits for a class of subcritical  branching Markov processes exist under some conditions on the mean semigroup, and
    characterized all of their quasi-stationary distributions, see \cite[Chapter 5]{AH} and the references therein.

	In this paper, we are interested in a class of subcritical $(\xi, \psi)$-superprocesses.
	We will prove the existence of the Yaglom limit and identify all  quasi-stationary distributions under some asymptotic conditions on its mean semigroup.
	Our superprocesses are general in the sense that the spatial motion $\xi$ can be a general Borel right process taking values in a Polish space,
	and the branching mechanism $\psi$ can be spatially inhomogeneous.
	Precise statements of the assumptions and the results are presented in the next subsection.
	
	As far as we know, there are no results on
	Yaglom limit and quasi-stationary distributions for general superprocesses in the  literature.
	Here we list some papers dealing with superprocesses conditioning on various kinds of survivals under different settings:
	\cite{ChampagnatRaelly2008Limit, Etheridge2003A-decomposition, Evans1992The-entrance, EvansPerkins1990Measure-valued, LiuRen2009Some, RenSongSun2019Spine, RenSongSun2018Limit, RenSongZhang2015Limit, Serlet1996Occupation}.

\subsection{Main result} \label{sec:super}

	We first recall some basics about superprocesses.
	Let $E$ be a Polish space.
    Let $\partial$ be an isolated point not contained in $E$ and $E_\partial: = E \cup \{\partial\}$.
	Denote by $\mathcal B(E, D)$ the collection of Borel maps  from $E$ to some measurable space $D$.
    If $D$ is a subset of $\mathbb R$, we denote by $\mathcal B_b(E,D)$ the bounded measurable functions from $E$ to $D$.
	Assume that \emph{the underlying process} $\xi = \{(\xi_t)_{t\ge0}; (\Pi_x)_{x\in E}\}$ is an $E_\partial$-valued Borel right process with $\partial$ as an
    absorbing state.
	Denote by $\zeta:=\inf\{t>0: \xi_t=\partial\}$ the lifetime of $\xi$.
	Let \emph{the branching mechanism} $\psi$ be a function on $E \times [0,\infty)$ given by
\begin{equation}
	\psi(x,z)
	= -\beta(x) z + \sigma(x)^2 z^2 + \int_{(0,\infty)} (e^{-zu} -1 + zu) \pi(x,du),
	\quad x\in E, z\geq 0,
\end{equation}
	where $\beta, \sigma \in \mathcal B_b(E,\mathbb R)$ and $(u \wedge u^2) \pi(x,du)$ is a bounded kernel from $E$ to $(0,\infty)$.
	Let $\mathcal M_f(E)$ denote the space of all finite Borel measures on $E$ equipped with the topology of weak convergence.
	Denote by $\mathcal B(\mathcal M_f(E))$ the Borel $\sigma$-field generated by this topology.
	For any $\mu\in\mathcal M_f(E)$ and $g \in \mathcal B(E, [0,\infty))$, we use $\mu(g)$ to
   denote the integration of $g$ with respect to $\mu$ whenever the integration is well defined.
    We will use $\|\mu\|$ to denote $\mu(1)$.
  	For any $f \in \mathcal B_b(E, [0,\infty))$, there is a unique locally bounded non-negative map $(t,x)\mapsto V_tf(x)$ on $[0,\infty) \times E$ such that
\begin{equation} \label{eq:BGD.1}
	V_tf(x) + \Pi_x \left[ \int_0^{t\wedge \zeta} \psi \left(\xi_s, V_{t-s}f(\xi_s)\right) ds\right]
	= \Pi_x\left[ f(\xi_t)  \mathbf 1_{t < \zeta}\right], \quad t\geq 0, x\in E.
\end{equation}
	Here, the local boundedness of the map $(t,x) \mapsto V_tf(x)$ means that $\sup_{0\leq t\leq T, x\in E} V_tf(x)< \infty$ for $T >0$.
	Moreover, there exists an $\mathcal M_f(E)$-valued Borel right process $X =\{(X_t)_{t\geq 0}; (\mathbb P_\mu)_{\mu \in \mathcal M_f(E)}\}$ such that
\begin{equation}
	\mathbb P_\mu[e^{- X_t(f)}]
	= e^{- \mu(V_tf)},
	\quad t\geq 0,~\mu \in \mathcal M_f(E), f \in \mathcal B_b(E,[0,\infty)).
\end{equation}
	We call $X$ a \emph{$(\xi, \psi)$-superprocess}.
	See \cite{Li2011MeasureValued} for more details.

	The \emph{mean semigroup} $(P_t^\beta)_{t\geq 0}$ of $X$ is defined by
\begin{equation}
	P_t^\beta f(x)
	:= \Pi_x\left[e^{\int_0^t \beta(\xi_r)dr }f(\xi_t)  \mathbf 1_{t < \zeta}\right],
	\quad f\in \mathcal B_b(E,\mathbb R), t\geq 0, x\in E.
\end{equation}
	It is well-known (see \cite[Proposition 2.27]{Li2011MeasureValued}) that
\begin{equation} \label{Fact:M!}
	\mathbb P_\mu[X_t(f)] = \mu (P_t^\beta f),
	\quad \mu \in \mathcal M_f(E), t\geq 0, f \in \mathcal B_b(E,\mathbb R).
\end{equation}	

	In this paper, we will always assume that there exist a constant $\lambda<0$, a function $\phi \in \mathcal B_b(E,(0,\infty))$ and a probability measure
    $\nu$ with full support on $E$ such that for each $t\geq 0$, $P_t^\beta \phi = e^{\lambda t}\phi$, $\nu P_t^\beta = e^{\lambda t} \nu$ and $\nu(\phi) = 1$.
	The assumption $\lambda<0$ says that the mean of $(X_t(\phi))_{t\geq 0}$
	decays exponentially with rate $\lambda$, and in this case the superprocess $X$ is called subcritical.
	Denote by $L_1^+(\nu)$ the collection of non-negative Borel functions on $E$ which are integrable with respect to the measure $\nu$.
	We further assume that the following two conditions  hold:
\begin{equation}
\label{asp:H2!} \tag{H1}
\begin{minipage}{0.9\textwidth}
	For all $t>0$, $x\in E$, and $f\in L_1^+(\nu)$, it holds that
	 \[ P_t^\beta f(x) = e^{\lambda t} \phi(x) \nu(f) (1+ H_{t,x,f})\]
		for some real $H_{t,x,f}$ with
	\[\sup_{x\in E, f\in L_1^+(\nu)} |H_{t,x,f}| < \infty
	\mbox{ and }
	\lim_{t\to \infty} \sup_{x\in E, f\in L_1^+(\nu)} |H_{t,x,f}| = 0.\]
\end{minipage}
\end{equation}
\begin{equation}
\label{asp:H4!} \tag{H2}
\begin{minipage}{0.9\textwidth}
	There exists  $T\geq 0$ such that $\mathbb P_\nu(\|X_t\| = 0)>0$ for all $t> T$.
\end{minipage}
\end{equation}
	Note that $L_1^+(\nu)$ in \eqref{asp:H2!} can be replaced by the collection of all non-negative Borel functions $f$ with $\nu(f) = 1$.
	In fact, for any $f\in L_1^+(\nu)$ and $k \in (0,\infty)$, it is easy to see that $H_{t,x,f} = H_{t,x,kf}$.

	\eqref{asp:H2!} is mainly concerned with the spatial motion and \eqref{asp:H4!} is mainly about the branching mechanism of the superprocess.
	In Subsection \ref{subsec:examples}, we will give examples satisfying these two assumptions.

    We  mention here that quantities like $H_{t,x,f}$ in this paper might depend on the underlying process $\xi$ and the branching mechanism $\psi$.
    Since $\xi$ and $\psi$ are fixed, dependence on them will not be explicitly specified.

	Denote by $\mathbf 0$ the null measure on $E$.
	Write $\mathcal M_f^o(E) := \mathcal M_f(E)\setminus \{ \mathbf 0\}$.
	Any probability measure $\mathbf P$ on $\mathcal M_f^o(E)$ will also be understood as its unique extension on $\mathcal M_f(E)$ with  $\mathbf P(\{\mathbf
    0\}) = 0$.
	Since $\phi$ is strictly positive, we have
\begin{equation}
	\mathbb P_\mu[X_t(\phi)]
	\overset{\text{\eqref{Fact:M!}}}= \mu(P_t^\beta \phi)
	=e^{\lambda t}\mu(\phi)>0, \quad t\geq 0, \mu \in \mathcal M_f^o(E).
\end{equation}
	Thus,
\begin{equation}  \label{lem:Nd!}
		\mathbb P_\mu(\|X_t\| > 0) > 0,\quad t\geq 0,\mu \in \mathcal M_f^o(E).
\end{equation}
   Hence we can condition the superprocess $X$ on survival up to time $t$ if  the distribution of $X_0$ is not concentrated on $\{\mathbf 0\}$.
	Our first main result is the following.
	
\begin{thm} \label{Theorem:Y:H1:H2:H3:H4}
	If \eqref{asp:H2!} and \eqref{asp:H4!} hold, then there exists a probability measure $\mathbf Q_\lambda$
	on $\mathcal M_f^o(E)$ such that
\begin{equation}
 	\mathbb P_\mu \left(X_t \in \cdot \middle| \|X_t\| > 0 \right) \xrightarrow[t\to \infty]{w} \mathbf Q_\lambda(\cdot),
 	\quad \mu \in \mathcal M_f^o(E),
\end{equation}
	where $\xrightarrow{w}$ stands for weak convergence.
\end{thm}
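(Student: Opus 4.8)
The plan is to work with Laplace functionals and reduce everything to the scalar function $V_t f$. Fix $\mu \in \mathcal M_f^o(E)$ and a test function $f \in \mathcal B_b(E,[0,\infty))$. The conditional Laplace functional is
\[
\mathbb P_\mu\bigl[e^{-X_t(f)} \,\big|\, \|X_t\|>0\bigr]
= 1 - \frac{1 - e^{-\mu(V_t f)}}{1 - e^{-\mu(V_t \infty)}},
\]
where $V_t\infty(x) := \lim_{\theta\to\infty} V_t(\theta)(x) = -\log \mathbb P_{\delta_x}(\|X_t\|=0)$ is the extinction probability function. Since weak convergence of $\mathcal M_f^o(E)$-valued random measures is determined by convergence of Laplace functionals against bounded continuous $f$ (plus tightness, which is automatic once the limiting functional is itself a Laplace functional of a probability measure on $\mathcal M_f^o(E)$), it suffices to show that $\mu(V_t f)$ and $\mu(V_t \infty)$ both decay like $c_f\, e^{\lambda t}\mu(\phi)$ and $c_\infty\, e^{\lambda t}\mu(\phi)$ respectively, for constants $c_f \in [0,\infty)$ and $c_\infty \in (0,\infty]$ not depending on $\mu$, and then to identify the limit of the ratio. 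Concretely, if $e^{-\lambda t}\mu(V_t f) \to c_f \mu(\phi)$ as $t\to\infty$ (allowing $c_\infty=+\infty$ in the $f\equiv\infty$ case, handled separately), then the displayed conditional functional converges to $1 - c_f/c_\infty$, and one checks this is the Laplace functional of a probability measure $\mathbf Q_\lambda$ on $\mathcal M_f^o(E)$; the measure does not depend on $\mu$, which is exactly the assertion.

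The core analytic step is therefore to prove the asymptotics $e^{-\lambda t} V_t f(x) \to \phi(x)\,\ell(f)$ uniformly enough in $x$, for some functional $\ell$. Start from the mild equation \eqref{eq:BGD.1}, which with the mean semigroup notation reads
\[
V_t f(x) = P_t^\beta f(x) - \Pi_x\!\left[\int_0^{t\wedge\zeta}\!\bigl(\psi(\xi_s,V_{t-s}f(\xi_s)) + \beta(\xi_s)V_{t-s}f(\xi_s)\bigr)\,ds\right],
\]
where the bracketed integrand $\psi(x,z)+\beta(x)z = \sigma(x)^2 z^2 + \int_{(0,\infty)}(e^{-zu}-1+zu)\pi(x,du)$ is non-negative and $O(z^2 \wedge z)$ near the relevant regime. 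Multiplying by $e^{-\lambda t}$ and using \eqref{asp:H2!} gives $e^{-\lambda t}P_t^\beta f(x) = \phi(x)\nu(f)(1+H_{t,x,f}) \to \phi(x)\nu(f)$. For the nonlinear term, the a priori bound $V_{t-s}f \le P^\beta_{t-s}f$ together with boundedness of $f$ and $\lambda<0$ shows $\sup_{x}V_{t-s}f(x) \le C e^{\lambda(t-s)}$, so the quadratic-type integrand is $O(e^{2\lambda(t-s)})$; after multiplying by $e^{-\lambda t}$ and integrating in $s$ one gets a contribution of order $e^{\lambda t}\int_0^t e^{\lambda(t-s)}\,ds$ times a bounded factor — wait, more carefully: $e^{-\lambda t}\int_0^t \Pi_x[\cdots]ds$ with integrand $\lesssim e^{2\lambda(t-s)}$ gives $\lesssim \int_0^t e^{\lambda(t-s)} e^{-\lambda s}\cdot e^{2\lambda s}\,ds\cdot$, which needs care. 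The clean way is to write $g_t(x):= e^{-\lambda t}V_t f(x)$, derive a renewal-type inequality for $g_t$, and show $g_t$ is bounded and Cauchy as $t\to\infty$, with limit $h$ solving a fixed-point relation forcing $h = \phi\cdot\ell(f)$; here $\ell(f)$ is expressed as $\nu(f)$ minus an integrated nonlinear correction. One then verifies $\ell$ is finite, monotone, and (by dominated convergence and monotone convergence as $\theta\uparrow\infty$) extends to $f\equiv\infty$ with $\ell(\infty)=:c_\infty$, which is strictly positive since $V_t\infty \le P^\beta_t(\text{large constant})$ forces $e^{-\lambda t}V_t\infty$ bounded, and strictly positive because $\mathbb P_\mu(\|X_t\|>0)>0$ by \eqref{lem:Nd!}.

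The main obstacle, I expect, is twofold. First, getting the asymptotics of $V_t f$ \emph{uniformly in the starting point} $x$ — one needs the error term from \eqref{asp:H2!} to propagate through the nonlinear integral equation without blowing up, which is why the uniform version of \eqref{asp:H2!} (with $\sup_{x,f}|H_{t,x,f}|\to 0$) is built into the hypothesis; the renewal/Gronwall bookkeeping must be done with the $e^{\lambda\cdot}$ weights throughout. Second, \eqref{asp:H4!} is what guarantees $c_\infty<\infty$ — equivalently that $\mathbb P_{\delta_x}(\|X_t\|=0)$ is bounded away from $0$ for large $t$ in an $L^1(\nu)$-integrated sense — so that the denominator $1-e^{-\mu(V_t\infty)}$ in the conditional functional is comparable to $\mu(V_t\infty)\asymp e^{\lambda t}$ rather than degenerating; without it the Yaglom limit could fail to be a probability measure. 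Once both asymptotics are in hand with the common rate $e^{\lambda t}\mu(\phi)$, the ratio converges, the $\mu$-dependence cancels, and a routine check (continuity of $f\mapsto 1-\ell(f)/c_\infty$ along bounded pointwise-monotone limits, value $1$ at $f\equiv 0$) identifies the limit as the Laplace functional of a probability measure $\mathbf Q_\lambda$ on $\mathcal M_f^o(E)$, completing the proof.
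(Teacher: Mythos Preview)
Your strategy has a genuine gap at its core. You want to show that $e^{-\lambda t}V_t f(x)\to \phi(x)\,\ell(f)$ and then take the ratio $c_f/c_\infty$, but the limit $c_\infty=\lim_{t\to\infty}e^{-\lambda t}\nu(v_t)$ need not be positive under \eqref{asp:H2!} and \eqref{asp:H4!}. From \eqref{eq:nuP.1} the function $t\mapsto e^{-\lambda t}\nu(v_t)$ is non-increasing, so a limit in $[0,\infty)$ exists, but nothing in the hypotheses prevents it from being $0$; indeed $e^{-\lambda t}\nu(v_t)=e^{-\lambda T_0}\nu(v_{T_0})\exp\bigl\{-\int_{T_0}^{t}\nu(\Psi_0 v_u)/\nu(v_u)\,du\bigr\}$, and the integrand, while $o(1)$ by \eqref{eq:nP}, need not be integrable. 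The single-point (CSBP) analogue makes this explicit: for subcritical Galton--Watson or CSBP, $\mathbb P(\text{survival to }t)\sim c\,e^{\lambda t}$ with $c>0$ holds \emph{only} under an $L\log L$-type moment condition on the branching mechanism, which is not assumed here. When that condition fails one has $c_\infty=0$, hence $c_f=0$ for every $f$ as well, and your ratio is $0/0$. Your stated justification for $c_\infty>0$ (``strictly positive because $\mathbb P_\mu(\|X_t\|>0)>0$'') only gives $\nu(v_t)>0$ for each $t$, not a positive limit for $e^{-\lambda t}\nu(v_t)$.

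The paper circumvents this by never asserting that $e^{-\lambda t}V_t f$ has a positive limit. It proves instead the two weaker statements $V_t f(x)=\phi(x)\,\nu(V_t f)\,(1+o(1))$ uniformly in $x,f$ (Proposition~\ref{prop:Vf2}) and $\nu(V_{t+s}f)/\nu(V_t f)\to e^{\lambda s}$ (see \eqref{eq:nVR}), and then works directly with the conditional log-Laplace $\Gamma_t f$, whose limit along any subsequence is shown to satisfy a functional equation (Lemma~\ref{prop:Gtb:H1:H2:H3:H4}). Uniqueness of solutions to that equation (Lemma~\ref{prop:G*:H1:H2:H3:H4}) forces all subsequential limits to coincide, yielding $G=\lim_t\Gamma_t$ without ever dividing by a possibly vanishing $c_\infty$. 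If you want to repair your argument along its own lines, you would need to impose an additional moment hypothesis (an $L\log L$ condition on $\pi$) to force $c_\infty>0$; otherwise you must pass, as the paper does, to a ratio argument.
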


	Now we introduce the concepts of \emph{quasi-limiting distribution (QLD)} and \emph{quasi-stationary distribution (QSD)} for our superprocess $X$.
	For any probability measure $\mathbf P$ on $\mathcal M_f(E)$, define $(\mathbf P\mathbb P)[\cdot] := \int_{\mathcal M_f(E)} \mathbb P_\mu[\cdot] \mathbf
    P(d\mu)$.
	We say a probability measure $\mathbf Q$ on $\mathcal M^o_f(E)$ is a QLD of $X$, if there exists a probability measure $\mathbf P$ on
	$\mathcal M_f^o(E)$ such that
\[
	(\mathbf P\mathbb P)\left(X_t \in B \middle| \|X_t\|>0\right) \xrightarrow[t\to \infty]{} \mathbf Q(B),
	\quad B\in \mathcal B(\mathcal M^o_f(E)).
\]
	We say a probability measure $\mathbf Q$ on $\mathcal M^o_f(E)$ is a QSD of $X$, if
\[
	(\mathbf Q \mathbb P) \left( X_t \in B \middle | \|X_t\|>0 \right) = \mathbf Q(B),
	\quad t\geq 0, B \in \mathcal B(\mathcal M^o_f(E)).
\]
	
	It follows from \cite[Proposition 1]{MeleardVillemonais2012Quasi-stationary} that, for any Markov process on $[0,\infty)$ with $0$ as an absorbing state, its QLDs and QSDs are equivalent.
	We claim that this is also the case for our $\mathcal M_f(E)$-valued Markov process $X$, for which the null measure $\mathbf 0$ is an absorbing state.
	In fact, since $E$ is a Polish space, $\mathcal M_f(E)$ is again Polish \cite[Lemma 4.3]{Kallenberg2017Random}.
	So is $\mathcal M^o_f(E)$ \cite[Theorem A1.2]{Kallenberg2002Foundations}.
	Thus $\mathcal M^o_f(E)$ is Borel isomorphic to $(0,\infty)$ \cite[Theorem A.1.6]{Kallenberg2002Foundations}.
	That is, there exists a bijection $\tau: \mathcal M^o_f(E) \to (0,\infty)$ such that both $\tau$ and its inverse $\tau^{-1}$ are Borel measurable.
	Extend $\tau$ uniquely so that it is a bijection between $\mathcal M_f(E)$ and $[0,\infty)$.
	Then, it is easy to verify that $\tau$ is a Borel isomorphism
	between $\mathcal M_f(E)$ and $[0,\infty)$ which maps $\mathbf 0$ to $0$.
	Now for any $\mathcal M_f(E)$-valued Markov process with $\mathbf 0$ as an absorbing state,
	its image under $\tau$ is a $[0,\infty)$-valued Markov process with $0$ as an absorbing state.
	Therefore we can apply \cite[Proposition 1]{MeleardVillemonais2012Quasi-stationary} to $(\tau(X_t))_{t\geq 0}$ which gives that
	a probability $\mathbf Q$ on $\mathcal M^o_f(E)$ is a QLD for $X$ if and only if it is a QSD for $X$.
	Similarly, we can apply \cite[Proposition 2]{MeleardVillemonais2012Quasi-stationary} to $X$ which says that
\begin{equation} \label{eq:S.2}
\begin{minipage}{0.9\textwidth}
	if a probability measure $\mathbf Q$ on $\mathcal M^o_f(E)$ is a QSD of $X$, then there exists an $r\in (-\infty,0)$ such that $(\mathbf Q\mathbb
    P)(\|X_t\|>0) = e^{rt}$ for all $t\geq 0$.
	In this case, we call $r$ the mass decay rate of $\mathbf Q$.
\end{minipage}
\end{equation}
	
\begin{thm} \label{thm:QSD}
	Suppose that \eqref{asp:H2!} and \eqref{asp:H4!} hold.
	Then (1) for each $r \in [\lambda, 0)$, there exists a unique QSD for $X$ with mass decay rate $r$;
	and (2) for each $r\in (-\infty, \lambda)$, there is no QSD for $X$ with mass decay rate $r$.
\end{thm}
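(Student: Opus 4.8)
The plan is to translate quasi-stationarity into a functional equation for the cumulant semigroup $(V_t)_{t\ge0}$ and then solve it. For a probability measure $\mathbf P$ on $\mathcal M_f^o(E)$ set $\widehat{\mathbf P}(f):=\int_{\mathcal M_f^o(E)}(1-e^{-\mu(f)})\,\mathbf P(d\mu)$ for $f\in\mathcal B_b(E,[0,\infty))$; since the functions $\mu\mapsto 1-e^{-\mu(f)}$ generate $\mathcal B(\mathcal M_f(E))$, the functional $\widehat{\mathbf P}$ determines $\mathbf P$. Put $\bar v_t(x):=-\log\mathbb P_{\delta_x}(\|X_t\|=0)$, so that $\mathbb P_\mu(\|X_t\|=0)=e^{-\mu(\bar v_t)}$, $\bar v_{t+s}=V_t\bar v_s$, and $\bar v_t\downarrow0$ (using \eqref{asp:H4!} and subcriticality). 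Using $e^{-X_t(f)}=1$ on $\{\|X_t\|=0\}$ together with $\mathbb P_\mu[e^{-X_t(f)}]=e^{-\mu(V_tf)}$, a direct computation shows that a probability $\mathbf Q$ on $\mathcal M_f^o(E)$ is a QSD with mass decay rate $r$ if and only if
\begin{equation}\label{eq:QSDfn}
	\widehat{\mathbf Q}(V_tf)=e^{rt}\,\widehat{\mathbf Q}(f),\qquad t\ge0,\ f\in\mathcal B_b(E,[0,\infty)),
\end{equation}
for some $r$, in which case $\widehat{\mathbf Q}(\bar v_t)=(\mathbf Q\mathbb P)(\|X_t\|>0)=e^{rt}$ by \eqref{eq:S.2}. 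The one substantial analytic input I would need --- which is also what drives Theorem \ref{Theorem:Y:H1:H2:H3:H4} --- is the precise asymptotics of $(V_t)$ extracted from \eqref{asp:H2!}: there is a constant $\bar\rho\in(0,\infty)$ with $e^{-\lambda t}\bar v_t/\phi\to\bar\rho$ uniformly on $E$, and for each $f$ with $\nu(f)>0$ a constant $\rho(f)\in(0,\nu(f)]$ with $e^{-\lambda t}V_tf/\phi\to\rho(f)$ uniformly, where $\rho(V_sf)=e^{\lambda s}\rho(f)$ and $\bar\rho=\sup_f\rho(f)$; if $\nu(f)=0$ then $V_tf\equiv0$ for $t>0$.

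With this in hand I would first identify the rate-$\lambda$ QSD: testing the convergence in Theorem \ref{Theorem:Y:H1:H2:H3:H4} against the bounded continuous functions $\mu\mapsto 1-e^{-\mu(f)}$ and invoking the asymptotics gives $\widehat{\mathbf Q}_\lambda(f)=\lim_t\mu(V_tf)/\mu(\bar v_t)=\rho(f)/\bar\rho$ for every $\mu\in\mathcal M_f^o(E)$, and since $\rho(V_sf)=e^{\lambda s}\rho(f)$ this $\widehat{\mathbf Q}_\lambda$ solves \eqref{eq:QSDfn} with $r=\lambda$; by the criterion, $\mathbf Q_\lambda$ is a QSD of mass decay rate $\lambda$. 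For $r\in(\lambda,0)$ I would produce a QSD by subordination. Let $\alpha:=r/\lambda\in(0,1)$, let $N$ be an independent integer-valued random variable with generating function $s\mapsto 1-(1-s)^\alpha$ (a Sibuya($\alpha$) variable, so $N\ge1$ a.s.), let $(\mu_i)_{i\ge1}$ be i.i.d.\ with law $\mathbf Q_\lambda$, and let $\mathbf Q_r$ be the law of $\sum_{i=1}^N\mu_i$; since $N\ge1$ and $\mathbf Q_\lambda(\{\mathbf 0\})=0$, $\mathbf Q_r$ is a probability measure on $\mathcal M_f^o(E)$, and conditioning on $N$ gives $\widehat{\mathbf Q}_r(f)=1-\mathbb E[(1-\widehat{\mathbf Q}_\lambda(f))^N]=\widehat{\mathbf Q}_\lambda(f)^\alpha$. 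Then $\widehat{\mathbf Q}_r(V_tf)=\widehat{\mathbf Q}_\lambda(V_tf)^\alpha=(e^{\lambda t}\widehat{\mathbf Q}_\lambda(f))^\alpha=e^{rt}\widehat{\mathbf Q}_r(f)$, so \eqref{eq:QSDfn} holds and $\mathbf Q_r$ is a QSD; and $\widehat{\mathbf Q}_r(\bar v_t)=\widehat{\mathbf Q}_\lambda(\bar v_t)^\alpha=e^{\lambda\alpha t}=e^{rt}$ shows its mass decay rate is $r$.

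For uniqueness (for all $r\in[\lambda,0)$), let $\mathbf Q$ be any QSD with mass decay rate $r$ and write $G:=\widehat{\mathbf Q}$. From $G(\bar v_s)=e^{rs}$, the uniform asymptotics $\bar v_s=\bar\rho e^{\lambda s}\phi\,(1+o(1))$, and monotonicity of $G$, squeezing $G$ between its values at $(1\mp\varepsilon)\bar\rho e^{\lambda s}\phi$ and substituting $\eta=\bar\rho e^{\lambda s}$ yields $G(\eta\phi)\sim\bar\rho^{-r/\lambda}\eta^{r/\lambda}$ as $\eta\downarrow0$. Fixing $f$ with $\nu(f)>0$, \eqref{eq:QSDfn} gives $G(f)=e^{-rt}G(V_tf)$ for every $t$; feeding in $V_tf=\rho(f)e^{\lambda t}\phi\,(1+o(1))$ uniformly, monotonicity of $G$, and the displayed asymptotics of $G(\cdot\,\phi)$, the same squeeze shows $e^{-rt}G(V_tf)\to\bar\rho^{-r/\lambda}\rho(f)^{r/\lambda}$, so $G(f)=(\rho(f)/\bar\rho)^{r/\lambda}=\widehat{\mathbf Q}_r(f)$ (and $=\widehat{\mathbf Q}_\lambda(f)$ when $r=\lambda$), while $\nu(f)=0$ gives $G(f)=0$ as well. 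Since $\widehat{\mathbf Q}$ determines $\mathbf Q$, this forces $\mathbf Q=\mathbf Q_r$.

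Finally, suppose $\mathbf Q$ is a QSD with mass decay rate $r<\lambda$. The first step of the previous paragraph still applies and gives $\widehat{\mathbf Q}(\eta\phi)\sim\bar\rho^{-r/\lambda}\eta^{r/\lambda}$, now with exponent $r/\lambda>1$, so $\widehat{\mathbf Q}(\eta\phi)=o(\eta)$ as $\eta\downarrow0$. On the other hand, by monotone convergence $\widehat{\mathbf Q}(\eta\phi)/\eta=\int(1-e^{-\eta\mu(\phi)})\eta^{-1}\,\mathbf Q(d\mu)\uparrow\int\mu(\phi)\,\mathbf Q(d\mu)$ as $\eta\downarrow0$, and this limit is strictly positive since $\mathbf Q(\mathcal M_f^o(E))=1$ and $\phi>0$ force $\mu(\phi)>0$ for $\mathbf Q$-a.e.\ $\mu$; this contradicts $\widehat{\mathbf Q}(\eta\phi)=o(\eta)$, so no such QSD exists. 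I expect the bookkeeping in all four paragraphs to be routine; the real work --- and the main obstacle --- is proving the asymptotics $\bar v_t\sim\bar\rho e^{\lambda t}\phi$ and $V_tf\sim\rho(f)e^{\lambda t}\phi$ uniformly on $E$, which is where \eqref{asp:H2!} enters essentially, requires a careful analysis of the nonlinear term in \eqref{eq:BGD.1} over long time horizons, and relies on \eqref{asp:H4!} to guarantee that $\bar v_t$ is finite.
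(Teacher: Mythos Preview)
Your overall architecture matches the paper's closely --- the functional equation \eqref{eq:QSDfn}, the Sibuya subordination for $r\in(\lambda,0)$, and a squeeze argument for uniqueness are all exactly what the paper does (Propositions~\ref{prop:EQ}--\ref{prop:UC}). However, there is a genuine gap in the ``substantial analytic input'' you name: the asymptotics $V_tf\sim\rho(f)e^{\lambda t}\phi$ and $\bar v_t\sim\bar\rho e^{\lambda t}\phi$ with \emph{positive constants} $\rho(f),\bar\rho$ are strictly stronger than what \eqref{asp:H2!} and \eqref{asp:H4!} yield, and need not hold. What the paper actually proves is $V_tf(x)=\phi(x)\,\nu(V_tf)\,(1+o(1))$ uniformly (Proposition~\ref{prop:Vf2}) together with the ratio statement $\nu(V_{t+s}f)/\nu(V_tf)\to e^{\lambda s}$ (see \eqref{eq:nVR}); equivalently, $e^{-\lambda t}\nu(V_tf)$ is non-increasing (by \eqref{eq:nuP.1}) and \emph{slowly varying}, but its limit may well be zero. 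Without an $L\log L$--type moment on the branching mechanism there is no reason to expect $\bar\rho>0$, and once $\bar\rho=0$ your substitution $\eta=\bar\rho e^{\lambda s}$, the formula $\widehat{\mathbf Q}_\lambda(f)=\rho(f)/\bar\rho$, and the $o(\eta)$ contradiction for $r<\lambda$ all collapse.

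The paper sidesteps this by never isolating $\rho(f)$ or $\bar\rho$: all arguments go through the \emph{ratio} $V_tf/v_t$, which does converge uniformly to $1-e^{-Gf}$ (combine Proposition~\ref{prop:Vf2} with \eqref{eq:vp.6}--\eqref{eq:vp.61}). Uniqueness (Lemma~\ref{prop:G*:H1:H2:H3:H4}) then comes from studying $Q_t(u\mathbf 1_E):=e^{-rt}(1-e^{-G_r(uv_t)})$: concavity makes this monotone in $t$, so the limit $q(u)$ exists, and the ratio $e^{\lambda s}v_t/v_{t+s}\to 1$ forces $q(e^{\lambda s})=e^{rs}$, i.e.\ $q(u)=u^{r/\lambda}$. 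Non-existence for $r<\lambda$ (Proposition~\ref{prop:CQ}(1)) uses only the concavity inequality $1-e^{-\mathscr L_{\mathbf Q}(uv_t)}\ge u(1-e^{-\mathscr L_{\mathbf Q}(v_t)})$ and that same ratio convergence --- no appeal to $\widehat{\mathbf Q}(\eta\phi)\asymp\eta^{r/\lambda}$ is needed. Your plan becomes correct if you replace the pointwise asymptotics $\rho(f)e^{\lambda t}$ everywhere by $\nu(V_tf)$ and run the squeezes against $v_t$ rather than against $e^{\lambda t}\phi$.
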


 \subsection{Examples} \label{subsec:examples}

	In this subsection, we will give some examples satisfying \eqref{asp:H2!} and \eqref{asp:H4!}.

	We first give an example satisfying \eqref{asp:H4!}.
	Suppose that $\psi$ is bounded from below by a spatially independent branching mechanism, that is, there is a function $\widetilde\psi$ of the form
\[
	\widetilde\psi(z)
	=\widetilde\beta z+\widetilde\sigma^2z^2+\int^\infty_0(e^{-zu}-1+zu)\widetilde\pi(du), \quad z\ge 0
\]
    with $\widetilde\beta\in \mathbb{R}$, $\widetilde\sigma\ge 0$ and $\widetilde\pi$ is a measure
    on $(0, \infty)$ satisfying $\int^\infty_0(u\wedge u^2)\widetilde\pi(du)<\infty$ such that
\[
	\psi(x, z)\ge \widetilde\psi(z), \quad x\in E, z\ge 0.
\]
    If $\widetilde\psi(\infty)=\infty$ and $\int^\infty1/\widetilde\psi(z)dz<\infty$, then by
    \cite[Lemma 2.3]{RenSongZhang2015Limit}, for any $t>0$,
\[
		   \inf_{x\in E}\mathbb P_{\delta_x}(\|X_t\| = 0)>0.
\]
     Using this and \eqref{eq:OY.1} below one can easily get that
     $\mathbb P_\nu(\|X_t\| = 0)>0$ for all $t>0$. Thus \eqref{asp:H4!} is satisfied with $T=0$.

	Now we give conditions that imply \eqref{asp:H2!}.
	We assume that $\xi$ is a Hunt process and there exist an $\sigma$-finite measure $m$ with full support on $E$ and a family of strictly positive, bounded
    continuous functions $\{p_t(\cdot,\cdot): t>0\}$ on $E\times E$ such that
\begin{align}
	\Pi_x[f(\xi_t) \mathbf 1_{t< \zeta}] = \int_E p_t(x,y) f(y)m(dy), & \quad t>0, x\in E, f\in \mathcal B_b(E,\mathbb R);
	\\ \int_E p_t(x,y) m(dx) \leq 1, &\quad t>0, y\in E;
	\\ \int_E \int_E p_t(x,y)^2 m(dx)m(dy) < \infty, &\quad t>0;
\end{align}
	and the functions $x \mapsto \int_E p_t(x,y)^2m(dy)$ and $y\mapsto \int_E p_t(x,y)^2m(dx)$ are both continuous.
	Choose an arbitrary $ \mathfrak b\in \mathcal B_b(E,\mathbb R)$.
	Denote by $(P_t^\mathfrak b)_{t\geq 0}$ a semigroup of operators on $\mathcal B_b(E,\mathbb R)$ given by
\begin{equation}
	P_t^\mathfrak b f(x)
	:= \Pi_x[e^{\int_0^t \mathfrak b(\xi_s)ds} f(\xi_t) \mathbf 1_{t< \zeta}],
	\quad f\in \mathcal B_b(E, \mathbb R), t\geq 0, x\in E.
\end{equation}
	Let us write $\langle f,g \rangle_m:= \int_E f(x)g(x) m(dx)$ for  the inner product of the Hilbert space $L^2(E,m)$.
	Then it is proved in \cite{RenSongZhang2015Limit, RenSongZhang2017Central} that there exists a family of strictly positive, bounded continuous functions
    $\{p_t^\mathfrak b: t> 0\}$ on $E\times E$ such that
\begin{equation} \label{eq:IU.0}
	e^{-\|\mathfrak b\|_\infty t} p_t(x,y)
	\leq p_t^\mathfrak b(x,y) \leq e^{\|\mathfrak b\|_\infty t}p_t(x,y),
	\quad t>0, x,y\in E
\end{equation}
	and that
\begin{equation}
	P_t^\mathfrak b f(x)
	= \int_E p_t^\mathfrak b(x,y) f(y) m(dy),
	\quad t>0, x\in E.
\end{equation}
	Define the dual semigroup $(\widehat {P^{\mathfrak b}_t} )_{t\geq 0}$ by
\begin{equation}
	\widehat {P_0^{\mathfrak b}}
	= I;
	\quad \widehat {P_t^{\mathfrak b}} f(x)
	:= \int_E p_t^\mathfrak b(y,x) f(y) m(dy),
	\quad t>0,x\in E, f\in \mathcal B_b(E,\mathbb R).
\end{equation}
	It is proved in \cite{RenSongZhang2015Limit, RenSongZhang2017Central} that both $(P_t^\mathfrak b)_{t\geq 0}$ and $(\widehat {P_t^\mathfrak b})_{t\geq 0}$
    are strongly continuous semigroups of compact operators on $L^2(E,m)$.	
	Let $L^\mathfrak b$ and $\widehat {L^\mathfrak b}$ be the generators of the semigroups of compact operators on $(P_t^\mathfrak b)_{t\geq 0}$ and $(\widehat
    {P_t^\mathfrak b})_{t\geq 0}$, respectively.
	Denote by $\sigma(L^\mathfrak b)$ and $\sigma(\widehat{L^\mathfrak b})$ the spectra of $L^\mathfrak b$ and $\widehat {L^{\mathfrak b}}$, respectively.
	According to Theorem 29 of \cite{Schaefer1974Banach}, $\lambda_\mathfrak b:= \sup \Re(\sigma(L^\mathfrak b)) = \sup \Re(\sigma( \widehat{L^\mathfrak b})) $
    is a common eigenvalue of multiplicity $1$ for both $L^\mathfrak b$ and $\widehat {L^{\mathfrak b}}$.
	By the argument in \cite{RenSongZhang2015Limit} and \cite{RenSongZhang2017Central}, the eigenfunctions $h_\mathfrak b$ of $L^\mathfrak b$ and $\widehat
    h_\mathfrak b$ of $\widehat{L^\mathfrak b}$ associated with the eigenvalue $\lambda_\mathfrak b$ can be chosen to be strictly positive and continuous everywhere on $E$.
	Setting $\langle h_\mathfrak b,h_\mathfrak b\rangle_m = \langle h_\mathfrak b, \widehat h_\mathfrak b\rangle_m = 1$ so that $h_\mathfrak b$ and $\widehat
    h_\mathfrak b$ are uniquely determined pointwisely.

	We assume further that $h_0:= h_\mathfrak b|_{\mathfrak{b} \equiv 0}$ is bounded, and the semigroup $(P_t)_{t\geq 0}$ is intrinsically ultracontractive in the following sense: for all $t>0$ and $x, y \in E$, it holds that $p_t(x,y) = c_{t,x,y} h_0(x) \widehat h_0(y)$ for some positive $c_{t,x,y}$ with $\sup_{x,y \in E} c_{t,x,y}< \infty$.
	Here, $\widehat h_0 := \widehat h_\mathfrak b|_{\mathfrak{b}\equiv 0}$.
	Then, it is proved in \cite{RenSongZhang2015Limit, RenSongZhang2017Central} that, for arbitrary $\mathfrak b \in \mathcal B_b(E,\mathbb R)$, $h_\mathfrak b$ is also bounded; and $(P_t^\mathfrak b)_{t\geq 0}$ is also intrinsically ultracontractive, in the sense that for any $t> 0$ and $x,y \in E$ we have
\begin{equation} \label{eq:IU.1}
	p^\mathfrak b_t(x,y)
		  = C^1_{\mathfrak b,t,x,y}h_\mathfrak b(x) \widehat h_\mathfrak b (y)
	\end{equation}
		 for some positive $C^1_{\mathfrak b,t,x,y}$ with $\sup_{x,y \in E} C^1_{\mathfrak b,t,x,y}< \infty$.
It follows from \cite[Proposition 2.5 and Theorem 2.7]{KimSong2008Intrinsic}, when \eqref{eq:IU.1} holds, $C^1_{\mathfrak b,t,x,y}$ can be chosen so that
\begin{equation} \label{eq:IU.11}
		\sup_{x,y \in E} (C^1_{\mathfrak b,t,x,y})^{-1}
	< \infty,
	\quad t>0,
\end{equation}
	and that for any $t>0, x,y \in E$,
\begin{equation}\label{eq:IU.2}
		C^1_{\mathfrak b,t,x,y}
	= e^{t\lambda_\mathfrak{b}} (1+ C^2_{\mathfrak b,t,x,y})
\end{equation}
		for some real $C^2_{\mathfrak b,t,x,y}$ with $\lim_{t\to \infty} \sup_{x,y \in E} C^2_{\mathfrak b,t,x,y} =0$.
	Therefore,
\begin{align}
		& m(\widehat h_{\mathfrak b}) \overset{\text{\eqref{eq:IU.1}}}= \int_{E} p_t^\mathfrak{b}(x,y)h_\mathfrak{b}(x)^{-1} (C^1_{\mathfrak b,t,x,y})^{-1} m(dy), \quad x\in E,
		\\&\leq  h_\mathfrak{b}(x)^{-1} \left(\sup_{z\in E}(C^1_{\mathfrak b,t,x,z})^{-1}\right)  \int_{E} p_t^\mathfrak{b}(x,y)m(dy)
	\\& < \infty \quad\text{by \eqref{eq:IU.0} and \eqref{eq:IU.11} and the strict positivity of $h_\mathfrak{b}$}.
\end{align}
	This allows us to define a probability measure $\nu_\mathfrak b (dx):= m(\widehat h_{\mathfrak b})^{-1} \widehat h_\mathfrak b (x)m(dx), x\in E$, and an eigenfunction $\phi_\mathfrak{b}(x) := m(\widehat h_{\mathfrak b}) h_\mathfrak b(x), x\in E$.

Finally we write $\lambda := \lambda_\beta$ and assume that $ \lambda< 0$. We now show that $X$ satisfies \eqref{asp:H2!} with $\phi:=\phi_\beta$ and $\nu:= \nu_\beta$.
	From their definitions, we see that the function $\phi \in \mathcal B_b(E,(0,\infty))$, and that the probability measure $\nu$ has full support on $E$.
	Further, it is easy to see that for each $t\geq 0$, $P_t^\beta \phi = e^{\lambda t}\phi$ and $\nu(\phi) = 1$.
	We also have that for any $t>0$,
\begin{align}
	&(\nu P_t^\beta)(dy) = \int_{x\in E}p_{t}^\beta(x,y)m(dy) \nu(dx)
	\\&= \int_{x\in E}p_{t}^\beta(x,y)m(dy) m(\widehat h_\beta)^{-1}\widehat h_\beta(x)m(dx)
	\\&=  m(\widehat h_\beta)^{-1}  \left(\int_{x\in E} p_t^\beta(x,y) \widehat h_\beta(x) m(dx) \right) m(dy)
	\\& = m(\widehat h_\beta)^{-1} e^{\lambda t}\widehat h_\beta(y) m(dy) =
	e^{\lambda t}\nu(dy).
\end{align}
	Therefore $\nu P_t^\beta = e^{\lambda t}\nu, t\geq 0$. Now for each $t>0, x \in E$ and $f\in L_1^+(\nu)$, we have
\begin{align}
	&P_t^\beta f(x) = \int_{E} p^\beta_t(x,y) f(y)m(dy)
	\overset{\text{\eqref{eq:IU.1}}}= \int_{E} h_\beta (x) \widehat h_\beta (y) C^1_{\beta,t,x,y} f(y) m(dy)
	\\&= \int_{E} \phi (x)  C^1_{\beta,t,x,y} f(y) \nu(dy)
	=: e^{\lambda t} \phi(x) \nu(f) (1+ H_{t,x,f}).
\end{align}
	Finally, from \eqref{eq:IU.1} and \eqref{eq:IU.2}, it is elementary to verify that $H_{t,x,f}$ satisfies the required condition \eqref{asp:H2!}.

	In three paragraphs above, we give some conditions that imply \eqref{asp:H2!}. See \cite[Section 1.4]{RenSongZhang2015Limit} for more than 10 concrete examples of processes satisfying these conditions.

	\subsection*{Organization of the rest of the paper.}
	In Subsection \ref{subsec:OY} we will give the proof of Theorem \ref{Theorem:Y:H1:H2:H3:H4}
    using Propositions \ref{prop:Vf1::H1:H2::Y}--\ref{prop::GD:H1:H2:H3:H4::Y}.
	In Subsection \ref{subsec:QSD} we will give the proof of Theorem \ref{thm:QSD}
	using Propositions \ref{prop:EQ}--\ref{prop:UC}.
	The proofs of Propositions \ref{prop:Vf1::H1:H2::Y}--\ref{prop::GD:H1:H2:H3:H4::Y} are given in
	Section \ref{sec:propsforthm1}.
 The proof of  Propositions \ref{prop:EQ}--\ref{prop:UC} are given in Section \ref{sec:propsforthm2}.
	Some technical lemmas are in the Appendix, and will be referred to as needed in the proofs.

\section{Proofs of Theorems \ref{Theorem:Y:H1:H2:H3:H4} and \ref{thm:QSD}} \label{sec:proofof thms}

\subsection{Proof of Theorem \ref{Theorem:Y:H1:H2:H3:H4}} \label{subsec:OY}
	It is easy to see that the operators $(V_t)_{t\geq 0}$
	given by \eqref{eq:BGD.1} can be extended uniquely to a family of operators $(\overline V_t)_{t\geq 0}$ on $\mathcal B(E,[0,\infty])$ such that for all $t\geq 0$, $f_n \uparrow f$ pointwisely in  $\mathcal B(E, [0,\infty])$ implies that $\overline V_tf_n \uparrow \overline V_tf$ pointwisely.
	Moreover, $(\overline V_t)_{t\geq 0}$ satisfies that
\begin{equation}
\begin{minipage}{0.9\textwidth}
	$\overline V_t f \leq \overline V_t g$ for $t\geq 0$ and $f\leq g$ in $\mathcal B(E,[0,\infty])$;
\end{minipage}\label{Fact:BV!}
\end{equation}
\begin{equation}
\begin{minipage}{0.9\textwidth}
	$\overline V_{t+s} = \overline V_t \overline V_s$ for $t, s\geq 0$;  and
\end{minipage} \label{eq:OY.0}
\end{equation}
\begin{equation}
\begin{minipage}{0.9\textwidth}
	$\mathbb P_\mu [e^{-X_t(f)}] = e^{- \mu(\overline V_tf)}$ for $t\geq 0$, $\mu \in \mathcal M_f(E)$, and $f\in \mathcal B(E,[0,\infty])$.
\end{minipage} \label{eq:BGD.2}
\end{equation}
	With some abuse of notation, we still write $V_t = \overline V_t$ for $t\geq 0$, and call $(V_t)_{t\geq 0}$ \emph{the extended cumulant semigroup} of the superprocess $X$.
	Define $v_t = V_t(\infty  \mathbf 1_E)$ for $t\geq 0$, then it holds that
\begin{equation} \label{eq:OY.1}
	\mathbb P_\mu (\|X_t\| = 0)
	= e^{- \mu (v_t)},
	\quad \mu \in \mathcal M_f(E), t\geq 0.
\end{equation}
	From this, we can verify that
\begin{equation}\label{lem:sv2!}
	\text{$\mu(v_t) > 0$ for all $\mu \in \mathcal M_f^o(E)$ and $t \geq 0$.}
\end{equation}
	In fact, if $\mu(v_t) = 0$, then by \eqref{eq:OY.1} we have $\mathbb P_\mu(\|X_t \| = 0) = 1$, which contradicts \eqref{lem:Nd!}.

	In the  proof of Theorem \ref{Theorem:Y:H1:H2:H3:H4}, we will use the following four propositions whose proofs are postponed to Subsections \ref{sec:Vf1}, \ref{sec:Vf2}, \ref{sec:G} and \ref{sec:GD} respectively.

\begin{prop} \label{prop:Vf1::H1:H2::Y}
	For any $f\in \mathcal B(E, [0,\infty]),~t > T$ and $x\in E$, we have $V_tf(x) = C^3_{t,x,f} \phi(x)$ for some non-negative $C^3_{t,x,f}$ with $\lim_{t\to \infty} \sup_{x\in E, f\in \mathcal B(E, [0,\infty])}  C^3_{t,x,f} = 0$.
	In particular, we have $\lim_{t\to \infty} \mu(V_tf)= 0 $ for all $\mu \in \mathcal M_f(E)$ and $f\in \mathcal B(E,[0,\infty])$.
\end{prop}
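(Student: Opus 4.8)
\emph{Proof strategy.} The plan is to reduce the whole statement to the single uniform estimate $\lim_{t\to\infty}\sup_{x\in E}v_t(x)/\phi(x)=0$, where $v_t=V_t(\infty\mathbf 1_E)$, and to obtain that estimate by first passing from the ``infinite'' function $\infty\mathbf 1_E$ to a $\nu$-integrable function, so that \eqref{asp:H2!} becomes applicable, and then exploiting $\lambda<0$. The first ingredient I would record is the comparison $V_tf(x)\le P_t^\beta f(x)$, valid for every $f\in\mathcal B(E,[0,\infty])$, $t\ge 0$ and $x\in E$: by \eqref{eq:BGD.2} and Jensen's inequality, $e^{-\mu(V_tf)}=\mathbb P_\mu[e^{-X_t(f)}]\ge e^{-\mathbb P_\mu[X_t(f)]}=e^{-\mu(P_t^\beta f)}$, where the last equality is \eqref{Fact:M!} extended to $\mathcal B(E,[0,\infty])$ by monotone convergence; taking $\mu=\delta_x$ gives the claim. (This comparison is also classical: it follows from the mild equation \eqref{eq:BGD.1} and the nonnegativity of $z\mapsto\psi(x,z)+\beta(x)z$ for $z\ge 0$.)

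Next I would invoke \eqref{asp:H4!}: for $s>T$, \eqref{eq:OY.1} gives $e^{-\nu(v_s)}=\mathbb P_\nu(\|X_s\|=0)>0$, hence $v_s$ is $\nu$-integrable. This is the crucial point, since it converts $\infty\mathbf 1_E$ into an element of $L_1^+(\nu)$ after one step of the semigroup. Indeed, for $T<s<t$, \eqref{eq:OY.0} gives $v_t=V_{t-s}v_s$, so by the comparison and \eqref{asp:H2!},
\[
	v_t(x)\le P_{t-s}^\beta v_s(x)=e^{\lambda(t-s)}\phi(x)\,\nu(v_s)\,(1+H_{t-s,x,v_s}),
\]
which is finite; in particular $v_t<\infty$ everywhere for $t>T$, and
\[
	\frac{v_t(x)}{\phi(x)}\le\nu(v_s)\,e^{\lambda(t-s)}\Big(1+\sup_{y\in E,\ g\in L_1^+(\nu)}|H_{t-s,y,g}|\Big),\quad x\in E.
\]
Fixing any $s>T$ (say $s=T+1$) and letting $t\to\infty$, the right-hand side tends to $0$ because $\lambda<0$ and by \eqref{asp:H2!}; thus $\lim_{t\to\infty}\sup_{x\in E}v_t(x)/\phi(x)=0$.

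Finally, for arbitrary $f\in\mathcal B(E,[0,\infty])$ and $t>T$, monotonicity \eqref{Fact:BV!} gives $V_tf\le V_t(\infty\mathbf 1_E)=v_t<\infty$; since $\phi>0$ we may set $C^3_{t,x,f}:=V_tf(x)/\phi(x)$, which is nonnegative, satisfies $V_tf(x)=C^3_{t,x,f}\phi(x)$, and obeys $\sup_{x\in E,\ f}C^3_{t,x,f}\le\sup_{x\in E}v_t(x)/\phi(x)\to 0$. The ``in particular'' assertion follows at once from $\mu(V_tf)\le\big(\sup_{x\in E,\ f}C^3_{t,x,f}\big)\mu(\phi)$ together with $\mu(\phi)<\infty$ (as $\phi$ is bounded and $\mu$ finite). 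I expect the only genuinely delicate step to be this reduction itself, i.e.\ recognizing that \eqref{asp:H4!} is exactly what places $v_s$ in $L_1^+(\nu)$ and thereby makes \eqref{asp:H2!} usable; the comparison inequality and the monotone-convergence bookkeeping for unbounded or $[0,\infty]$-valued arguments are routine.
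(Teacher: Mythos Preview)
Your proposal is correct and follows essentially the same approach as the paper: use \eqref{asp:H4!} to place $v_s$ (equivalently, $V_{T+\epsilon}f$) in $L_1^+(\nu)$, apply the semigroup property \eqref{eq:OY.0} together with the comparison $V_s\le P_s^\beta$ and \eqref{asp:H2!}, and conclude from $\lambda<0$. The only cosmetic differences are that the paper works directly with general $f$ (bounding $V_sV_{T+\epsilon}f\le P_s^\beta V_{T+\epsilon}f$ and invoking the fact \eqref{lem:nV::H2::Vf1}) rather than first reducing to $v_t$, and it justifies $V_s\le P_s^\beta$ via the mild equation \eqref{eq:Vf1.1} rather than Jensen's inequality---both of which you already note as alternatives.
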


\begin{prop} \label{prop:Vf2}
	For any $f\in \mathcal B(E,[0,\infty]),~t>T$ and $x\in E$, we have $V_tf(x) = \phi(x) \nu (V_tf) (1+C^4_{t,x,f}) $ for some real $C^4_{t,x,f}$ with $\lim_{t\to \infty} \sup_{x\in E, f\in \mathcal B(E, [0,\infty])} |C^4_{t,x,f}| = 0$.
\end{prop}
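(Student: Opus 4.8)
The plan is to deduce the statement from Proposition~\ref{prop:Vf1::H1:H2::Y} together with a careful analysis of the nonlinear correction in the log--Laplace equation, via a Harnack-type estimate followed by a main estimate in which three parameters are sent to their limits in the right order.

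\emph{Preliminaries I would set up.} Write $\psi_0(x,z):=\psi(x,z)+\beta(x)z=\sigma(x)^2z^2+\int_{(0,\infty)}(e^{-zu}-1+zu)\,\pi(x,du)\ge0$, and record the reformulation of \eqref{eq:BGD.1} obtained by absorbing the linear term $-\beta(x)z$ into the Feynman--Kac semigroup $(P^\beta_t)$ (standard, cf.\ \cite{Li2011MeasureValued}): for bounded non-negative $f$,
\begin{equation}\label{eq:Vf2.mild}
V_tf(x)=P^\beta_tf(x)-\Pi_x\Big[\int_0^{t\wedge\zeta}e^{\int_0^u\beta(\xi_r)dr}\,\psi_0\big(\xi_u,V_{t-u}f(\xi_u)\big)\,du\Big],
\end{equation}
with a non-negative integrand. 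I would then note the elementary facts that $z\mapsto\psi_0(x,z)$ is non-decreasing, that $\psi_0(x,z)\le C_2z$ for all $x$ and $z\le1$ for some constant $C_2<\infty$ (since $(u\wedge u^2)\pi(x,du)$ is a bounded kernel), and --- crucially --- that $\omega(z):=\nu(\psi_0(\cdot,z))/z$ is non-decreasing with $\omega(0^+)=0$ (dominated convergence, using that $(u\wedge u^2)$ is integrable against $\nu(dx)\pi(x,du)$); this is the superlinearity of $\psi_0$ near $0$ in $\nu$-mean. Set $\eta(t):=\sup_{x\in E,\,h\in L_1^+(\nu)}|H_{t,x,h}|$; by \eqref{asp:H2!} it is finite on $(0,\infty)$ and tends to $0$, and it is non-increasing because $P^\beta_{t+r}=P^\beta_tP^\beta_r$ together with \eqref{asp:H2!} give $H_{t+r,x,f}=H_{t,x,\,\nu(f)(1+H_{r,\cdot,f})\phi}$, the function $\nu(f)(1+H_{r,\cdot,f})\phi$ lying in $L_1^+(\nu)$. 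From \eqref{eq:Vf2.mild}, $V_rg\le P^\beta_rg$, whence $t\mapsto\nu(V_tf)$ and $t\mapsto e^{-\lambda t}\nu(V_tf)$ are non-increasing. Finally, by Proposition~\ref{prop:Vf1::H1:H2::Y}, $V_tf=C^3_{t,\cdot,f}\phi$ with $\mathfrak o(t):=\sup_{x,f}C^3_{t,x,f}\to0$; since $\phi>0$, I define $C^4_{t,x,f}:=V_tf(x)/(\phi(x)\nu(V_tf))-1$ when $\nu(V_tf)>0$ (when $\nu(V_tf)=0$, Step~A below forces $V_tf\equiv0$ for large $t$, so $C^4_{t,\cdot,f}:=0$ works and this case is irrelevant to the limit). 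Because $\phi\,d\nu$ is a probability measure and $\nu(V_tf)=\int(V_tf/\phi)\,\phi\,d\nu$, the claim is that $V_tf/\phi$ becomes constant relative to its $\phi\,d\nu$-mean, uniformly in $f$.

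\emph{Step A: a Harnack bound.} I would show that there are $M\ge1$ and $T'\ge T$ with $V_tf(x)\le M\,\phi(x)\,\nu(V_tf)$ for all $t>T'$, $x$, $f$. Fix $r\in(0,1/C_2)$. For $t$ large, $\|V_{t-u}f\|_\infty\le1$ for all $u\in[0,r]$ and $f$, by Proposition~\ref{prop:Vf1::H1:H2::Y}. Dropping the correction in \eqref{eq:Vf2.mild} on $[t-r,t]$ and invoking \eqref{asp:H2!} gives $V_tf(x)\le P^\beta_r(V_{t-r}f)(x)\le e^{\lambda r}(1+\eta(r))\,\phi(x)\,\nu(V_{t-r}f)$. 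Integrating \eqref{eq:Vf2.mild} against $\nu$, using $\psi_0(y,z)\le C_2z$ and $\nu(V_{t-u}f)\le e^{\lambda(r-u)}\nu(V_{t-r}f)$, gives $\nu(V_tf)\ge e^{\lambda r}(1-C_2r)\,\nu(V_{t-r}f)$. Dividing yields the claim with $M=(1+\eta(r))/(1-C_2r)$; in particular $\nu(V_tf)=0$ implies $V_tf\equiv0$.

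\emph{Step B: the main estimate.} Fix $s>0$ and $\varepsilon\in(0,s)$. Apply \eqref{eq:Vf2.mild} on $[t-s,t]$ with $g:=V_{t-s}f$ (bounded, by Proposition~\ref{prop:Vf1::H1:H2::Y}) to write $V_tf(x)=P^\beta_sg(x)-R(x)$, $R\ge0$; by \eqref{asp:H2!}, $P^\beta_sg(x)/\phi(x)=e^{\lambda s}\nu(g)(1+H_{s,x,g})$, and $\nu$-integrating gives $\nu(V_tf)=e^{\lambda s}\nu(g)-\nu(R)$, so
\[
\Big|\frac{V_tf(x)}{\phi(x)}-\nu(V_tf)\Big|\le e^{\lambda s}\nu(g)\,\eta(s)+\frac{R(x)}{\phi(x)}+\nu(R).
\]
The core is to bound $R$. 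Since $V_{t-u}f=V_{s-u}g\le P^\beta_{s-u}g$ and $\psi_0$ is non-decreasing, I may replace $R$ by $\widetilde R$, defined by the correction in \eqref{eq:Vf2.mild} with $P^\beta_{s-u}g$ in place of $V_{t-u}f$, and split the time integral at $\varepsilon$. On $[0,\varepsilon]$: here $s-u\ge s-\varepsilon>0$, so \eqref{asp:H2!} and $e^{\lambda(s-u)}\le1$ give $P^\beta_{s-u}g\le c_*\phi$ with $c_*:=\nu(g)(1+\eta(s-\varepsilon))$; for $t$ large $c_*\|\phi\|_\infty\le1$, so $\psi_0(\xi_u,P^\beta_{s-u}g(\xi_u))\le C_2c_*\phi(\xi_u)$, and the \emph{exact} identity $P^\beta_u\phi=e^{\lambda u}\phi$ yields $\widetilde R_1(x)\le C_2c_*\int_0^\varepsilon e^{\lambda u}\phi(x)\,du\le C_2\varepsilon\,c_*\,\phi(x)$ --- with no use of $\eta(u)$ as $u\downarrow0$. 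On $[\varepsilon,s]$: here $\eta(u)\le\eta(\varepsilon)<\infty$ and $\|P^\beta_{s-u}g\|_\infty\le e^{\|\beta\|_\infty s}\|g\|_\infty=:\delta$, so \eqref{asp:H2!} and $\nu(\psi_0(\cdot,\delta))=\delta\,\omega(\delta)$ give $\widetilde R_2(x)\le\frac{1+\eta(\varepsilon)}{|\lambda|}\,\phi(x)\,\delta\,\omega(\delta)$. Inserting Step~A ($\|g\|_\infty\le M\|\phi\|_\infty\nu(g)$, so $\delta\le e^{\|\beta\|_\infty s}M\|\phi\|_\infty\nu(g)$), these become $R(x)/\phi(x)\le\nu(g)\,B_{s,\varepsilon}(t,f)$ and $\nu(R)\le\nu(g)\,B_{s,\varepsilon}(t,f)$ with $B_{s,\varepsilon}(t,f)=C_2\varepsilon(1+\eta(s-\varepsilon))+\frac{1+\eta(\varepsilon)}{|\lambda|}e^{\|\beta\|_\infty s}M\|\phi\|_\infty\,\omega(e^{\|\beta\|_\infty s}M\|\phi\|_\infty\nu(g))$. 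Since $\nu(g)=\nu(V_{t-s}f)\le\mathfrak o(t-s)\to0$ uniformly in $f$, $\sup_fB_{s,\varepsilon}(t,f)\to C_2\varepsilon(1+\eta(s-\varepsilon))$ as $t\to\infty$; choosing $\varepsilon$ small (given $s$) and $t$ large makes $B_{s,\varepsilon}(t,f)\le\tfrac12 e^{\lambda s}$, hence $\nu(V_tf)\ge\tfrac12 e^{\lambda s}\nu(V_{t-s}f)$, and then
\[
\sup_{x,f}|C^4_{t,x,f}|\le 2\eta(s)+4e^{-\lambda s}\sup_f B_{s,\varepsilon}(t,f),\qquad \limsup_{t\to\infty}\sup_{x,f}|C^4_{t,x,f}|\le 2\eta(s)+4e^{-\lambda s}C_2\varepsilon(1+\eta(s-\varepsilon)).
\]
Letting $\varepsilon\downarrow0$ and then $s\to\infty$ (so $\eta(s)\to0$) gives $\lim_{t\to\infty}\sup_{x,f}|C^4_{t,x,f}|=0$.

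The hard point is the tension that the correction $R$ is only of the same order as $V_tf$, so I must gain from the superlinearity of $\psi_0$ near $0$; but that superlinearity is available only after averaging against $\nu$, i.e.\ only once the inner semigroup $P^\beta_u$ has averaged via \eqref{asp:H2!} --- which forces the split at $\varepsilon$ and, separately, forces the short-time part $u\in[0,\varepsilon]$ to be handled by the exact eigenfunction identity $P^\beta_u\phi=e^{\lambda u}\phi$, since \eqref{asp:H2!}'s error $\eta(u)$ is uncontrolled as $u\downarrow0$. Step~A is needed precisely to convert the $L^\infty$-size of $V_{t-s}f$ (which governs $R$) into its $\nu$-size (which governs the denominator $\nu(V_tf)$).
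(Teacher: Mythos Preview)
Your argument is correct. Both your route and the paper's share the same skeleton---a Harnack-type comparison $V_tf\lesssim\phi\,\nu(V_tf)$ followed by a short/long-time split of the nonlinear correction---but the execution differs substantially. The paper normalises each of the three pieces $P^\beta_sV_tf$, $I_{s,\epsilon}V_tf$, $J_{s,\epsilon}V_tf$ by $\phi(x)\nu(V_{t+s}f)$, and to do so must first establish the ratio asymptotic $\nu(V_{t+s}f)/\nu(V_tf)\to e^{\lambda s}$ (their \eqref{eq:nVR}); this in turn is built up through a chain of auxiliary facts about $\Psi_0'V_tf$, $\nu(\Psi_0V_tf)/\nu(V_tf)$, and an integral identity \eqref{eq:nuP.1} that expresses $\int e^{-\lambda u}\nu(\Psi_0V_uf)\,du$ exactly. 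You bypass the ratio result entirely: you normalise by $\nu(g)=\nu(V_{t-s}f)$, control the long-time correction $\widetilde R_2$ by the single scalar function $\omega(\delta)=\nu(\psi_0(\cdot,\delta))/\delta\to0$ after passing to $L^\infty$ via Step~A, and only at the very end convert $\nu(g)$ to $\nu(V_tf)$ through the crude lower bound $\nu(V_tf)\ge\tfrac12 e^{\lambda s}\nu(g)$. Your Step~A (pick $r$ small, compare $V_tf\le P^\beta_rV_{t-r}f$ against the elementary lower bound $\nu(V_tf)\ge e^{\lambda r}(1-C_2r)\nu(V_{t-r}f)$) is also more direct than the paper's derivation of the analogous bound \eqref{eq:VfO}. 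What the paper's longer route buys is the intermediate result \eqref{eq:nVR} itself, which is reused later in the proof of Lemma~\ref{prop:G*:H1:H2:H3:H4}; your approach does not yield it as a byproduct, though it can be recovered a posteriori from your Harnack bound together with the observation $\nu(\psi_0(\cdot,V_tf))\le M\|\phi\|_\infty\nu(V_tf)\,\omega(M\|\phi\|_\infty\nu(V_tf))=o(\nu(V_tf))$. One cosmetic point: the statement asserts $C^4_{t,x,f}$ is real for \emph{all} $t>T$, whereas your Step~A only covers $t>T'$; for the null case $\nu(V_tf)=0$ with $T<t\le T'$ you should invoke \eqref{lem:nullVf} directly rather than Step~A.
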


	For a probability measure $\mathbf P$ on $\mathcal M_f(E)$,
    the log-Laplace functional of $\mathbf P$ is defined by
	\[
	\mathscr L_\mathbf P f := - \log \int_{\mathcal M_f(E)}  e^{-\mu(f)} \mathbf P(d\mu), \quad
	f\in \mathcal B(E,[0,\infty]).
	\]
	For a finite random measure $\{Y; \mathbf P\}$, the log-Laplace functional of its distribution is denoted as $\mathscr L_{Y;\mathbf P}$.
	To simplify our notation, for each $t\geq 0$, we write
	$\Gamma_t := \mathscr L_{X_t;\mathbb P_\nu(\cdot | \|X_t\|>0)}$.
	
	We say a $[0,\infty]$-valued functional $A$ defined on $\mathcal B(E,[0,\infty])$ is monotone concave if
	(1) $A$ is a monotone functional, i.e., $f\leq g$ in $\mathcal B(E,[0,\infty])$ implies $Af \leq Ag$; and
	(2) for any $f\in \mathcal B(E,[0,\infty])$ with $Af< \infty$, the function $u \mapsto A(uf)$ is concave on $[0,1]$.

\begin{prop} \label{prop:G}
	The limit $Gf:= \lim_{t\to \infty} \Gamma_t f$ exists in $[0,\infty]$ for each $f\in \mathcal B(E,[0,\infty])$.
	Moreover, $G$ is the unique $[0,\infty]$-valued monotone concave functional on $\mathcal B(E,[0,\infty])$ such that
	$G(\infty  \mathbf 1_E) = \infty$ and that
\begin{equation} \label{eq:G.0}
	1 - e^{- GV_s f}
	= e^{s\lambda} (1 - e^{-Gf}),
	\quad s\geq 0, f\in \mathcal B(E,[0,\infty]).
\end{equation}
\end{prop}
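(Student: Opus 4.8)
The plan is to first establish the existence of the limit $Gf = \lim_{t\to\infty}\Gamma_t f$, then derive the functional equation \eqref{eq:G.0}, then verify the monotone concavity and the normalization $G(\infty\mathbf 1_E) = \infty$, and finally prove uniqueness. Let me sketch each piece.

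\textbf{Existence of the limit.} By definition, $e^{-\Gamma_t f} = \mathbb P_\nu[e^{-X_t(f)}\mid \|X_t\|>0] = \frac{\mathbb P_\nu[e^{-X_t(f)}\mathbf 1_{\|X_t\|>0}]}{\mathbb P_\nu(\|X_t\|>0)} = \frac{e^{-\nu(V_tf)} - e^{-\nu(v_t)}}{1 - e^{-\nu(v_t)}}$, using \eqref{eq:BGD.2} and \eqref{eq:OY.1}. Hence
\[
	1 - e^{-\Gamma_t f} = \frac{e^{-\nu(V_tf)} - 1}{e^{-\nu(v_t)}-1} = \frac{1 - e^{-\nu(V_tf)}}{1 - e^{-\nu(v_t)}}.
\]
By Proposition \ref{prop:Vf1::H1:H2::Y}, $\nu(V_tf)\to 0$ and $\nu(v_t)\to 0$ as $t\to\infty$ (the first for $f\in\mathcal B(E,[0,\infty])$, the second taking $f = \infty\mathbf 1_E$), so both numerator and denominator tend to $0$, and one should use the semigroup/cocycle structure to control the ratio. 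The natural device: fix $s$ large enough that $s>T$, write $t = s+u$ with $u\to\infty$, and use $V_tf = V_u V_s f$ together with Proposition \ref{prop:Vf1::H1:H2::Y} and especially Proposition \ref{prop:Vf2}, which gives $V_sf(x) = \phi(x)\nu(V_sf)(1+C^4_{s,x,f})$ with the error uniform in $x$ and $f$. The key consequence is that for $s>T$, $\nu(V_tf) = \nu(V_u V_s f)$, and since $V_s f$ is comparable to a multiple of $\phi$ up to a small multiplicative error, $\frac{1-e^{-\nu(V_tf)}}{1-e^{-\nu(v_t)}}$ converges; one extracts a limit that is increasing in $s$ (by monotonicity of $V_s$ and $\nu$) and identifies it as $1 - e^{-Gf}$, defining $Gf \in [0,\infty]$.

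\textbf{The functional equation.} For $s\ge 0$ and $t>T$ we have $\Gamma_{t+s}f = \mathscr L_{X_{t+s};\mathbb P_\nu(\cdot\mid\|X_{t+s}\|>0)}f$. Using the Markov property of $X$ under $\mathbb P_\nu$ (or equivalently $V_{t+s} = V_t V_s$) together with the eigen-relation $\nu P_t^\beta = e^{\lambda t}\nu$, one computes
\[
	1 - e^{-\Gamma_{t}(V_sf)} = \frac{1-e^{-\nu(V_t V_s f)}}{1-e^{-\nu(v_t)}} = \frac{1-e^{-\nu(V_{t+s}f)}}{1-e^{-\nu(v_{t+s})}}\cdot\frac{1-e^{-\nu(v_{t+s})}}{1-e^{-\nu(v_t)}}.
\]
The first factor converges to $1 - e^{-Gf}$ as $t\to\infty$, and the left-hand side converges to $1-e^{-G(V_sf)}$; it remains to show the second factor converges to $e^{s\lambda}$. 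This is the place where $\lambda$ enters: since $\nu(v_{t+s})$ and $\nu(v_t)$ are both small for large $t$, $\frac{1-e^{-\nu(v_{t+s})}}{1-e^{-\nu(v_t)}} \sim \frac{\nu(v_{t+s})}{\nu(v_t)}$, and by Proposition \ref{prop:Vf2} applied at times $>T$, $v_{t+s} = V_{t}v_s$ behaves like $\phi\cdot\nu(v_{t+s})$, while $\nu(v_{t+s}) = \nu(V_t v_s)$. A cleaner route: note $\mathbb P_\nu(\|X_{t+s}\|>0) = (\mathbf Q^{(t)}_\nu\mathbb P)(\cdots)$-type identities are not yet available, so instead use \eqref{Fact:M!}: $\mathbb P_\nu[X_t(\phi)] = \nu(P_t^\beta\phi) = e^{\lambda t}$, combined with $1-e^{-\nu(v_t)} = \mathbb P_\nu(\|X_t\|>0)$ and a comparison of $\mathbb P_\nu(\|X_t\|>0)$ with $e^{\lambda t}$ coming from Proposition \ref{prop:Vf1::H1:H2::Y} (which forces $v_t/\phi$ to decay, and the rate is pinned by the eigen-structure). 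Concretely, for $t>T$, $v_t = V_t(\infty\mathbf 1_E)$ satisfies $v_t(x) = C^3_{t,x}\phi(x)$ and $v_t(x) = \phi(x)\nu(v_t)(1+C^4_{t,x})$, so $\nu(v_{t+s})/\nu(v_t) = \nu(V_t v_s)/\nu(v_t)$; writing $v_s$ (for $s>T$) as comparable to $\nu(v_s)\phi$ up to small error and using $\nu(V_t\phi) = e^{\lambda t}$ via $V_t\phi \le$ a linear bound is delicate because $V_t$ is nonlinear — instead one uses the concavity/linearization $V_t(\epsilon\phi)/\epsilon \to P_t^\beta\phi = e^{\lambda t}\phi$ as $\epsilon\downarrow 0$, applied with $\epsilon = \nu(v_s)$ which is small for $s$ large. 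This yields $\nu(v_{t+s})/\nu(v_t)\to e^{\lambda s}$ and hence \eqref{eq:G.0}. The normalization $G(\infty\mathbf 1_E) = \infty$: $e^{-\Gamma_t(\infty\mathbf 1_E)} = \mathbb P_\nu[e^{-X_t(\infty\mathbf 1_E)}\mid\|X_t\|>0] = \mathbb P_\nu[\mathbf 1_{\|X_t\|=0}\mid\|X_t\|>0] = 0$, so $\Gamma_t(\infty\mathbf 1_E) = \infty$ for all $t$, whence $G(\infty\mathbf 1_E)=\infty$.

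\textbf{Monotone concavity and uniqueness.} Monotonicity of $G$ is inherited from monotonicity of each $\Gamma_t$, which in turn follows from $e^{-\Gamma_t f}$ being decreasing in $f$ (an integral of $e^{-\mu(f)}$ against a probability measure). Concavity of $u\mapsto G(uf)$ on $[0,1]$ when $Gf<\infty$: each $u\mapsto \Gamma_t(uf) = -\log\mathbb P_\nu[e^{-uX_t(f)}\mid\|X_t\|>0]$ is concave (the log-Laplace transform of a nonnegative random variable is concave), and concavity is preserved under pointwise limits, so $u\mapsto Gu f$ is concave. For uniqueness, suppose $G'$ is another $[0,\infty]$-valued monotone concave functional with $G'(\infty\mathbf 1_E)=\infty$ satisfying \eqref{eq:G.0}. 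Set $w_t f := 1 - e^{-G'(V_t f)}$; iterating \eqref{eq:G.0} gives $w_t f = e^{t\lambda}(1 - e^{-G'f})$, so $G'(V_tf) = -\log(1 - e^{t\lambda}(1-e^{-G'f})) \to 0$ as $t\to\infty$, and the same holds for $G$. Then for any $f$ and $s$, $G'f$ and $Gf$ are both recovered from the common quantities $\{G(V_sf)\}_{s\ge0}$, $\{G'(V_sf)\}_{s\ge 0}$ via \eqref{eq:G.0}, so it suffices to match $G$ and $G'$ on the range $\{V_s f: s>0\}$; using that $V_sf(x) = \phi(x)\nu(V_sf)(1+C^4_{s,x,f})$ for $s>T$ is a small multiple of $\phi$ and both $G,G'$ satisfy the same equation driven by the same $\lambda$ and $V_\cdot$, a Gronwall/continuity argument at small argument (where concavity forces $G(\epsilon\phi), G'(\epsilon\phi)$ to have matching right-derivatives at $0$, both equal to $\nu(\phi) = 1$ times the relevant constant determined by \eqref{eq:G.0}) pins them down.

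\textbf{Main obstacle.} I expect the crux to be the second factor in the functional-equation step — showing $\frac{\mathbb P_\nu(\|X_{t+s}\|>0)}{\mathbb P_\nu(\|X_t\|>0)}\to e^{\lambda s}$ — because this requires linearizing the nonlinear semigroup $V_t$ near $0$ and transferring the linear eigenvalue $\lambda$ (valid for $P_t^\beta$) to the nonlinear flow, using Propositions \ref{prop:Vf1::H1:H2::Y} and \ref{prop:Vf2} to guarantee the argument is small and the spatial profile is $\phi$-shaped with uniform error. The uniqueness argument is then a relatively soft consequence once both $G$ and any competitor are shown to vanish along $V_t$ and to be determined by their behavior at small multiples of $\phi$.
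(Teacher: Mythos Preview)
Your proposal has the right opening moves---the formula $1-e^{-\Gamma_t f}=\dfrac{1-e^{-\nu(V_tf)}}{1-e^{-\nu(v_t)}}$ and the factorization that isolates the ratio $\dfrac{1-e^{-\nu(v_{t+s})}}{1-e^{-\nu(v_t)}}$ are exactly what the paper uses---but two of your four steps have genuine gaps.

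\textbf{Existence of the limit.} You never actually show $\Gamma_t f$ converges. Saying ``$V_sf$ is comparable to a multiple of $\phi$, so the ratio converges'' is circular: after replacing $V_sf$ and $v_s$ by multiples $c_s\phi$ and $d_s\phi$, you still need $\nu(V_u(c_s\phi))/\nu(V_u(d_s\phi))$ to converge as $u\to\infty$, which is the original problem. The paper avoids proving convergence directly: it defines $G^{\mathbf t}f:=\varliminf_{n}\Gamma_{t_n}f$ along an \emph{arbitrary} sequence $\mathbf t$, shows every such $G^{\mathbf t}$ is monotone concave and satisfies \eqref{eq:G.0}, and then proves that any two functionals with those properties coincide. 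A sub-subsequence argument then gives the full limit. In other words, existence is obtained \emph{from} uniqueness, not before it.

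\textbf{The ratio $\nu(V_{t+s}f)/\nu(V_t f)\to e^{\lambda s}$.} Your linearization idea (take $\epsilon=\nu(v_s)$ small and use $V_t(\epsilon\phi)/\epsilon\to P_t^\beta\phi$) does not close: the denominator $\nu(v_t)$ is not $V_t$ applied to a small argument, and the linearization is for fixed $t$, not uniformly as $t\to\infty$. The paper's route is different and cleaner: integrate the evolution equation $V_sf+\int_0^sP_{s-u}^\beta\Psi_0V_uf\,du=P_s^\beta f$ against $\nu$ to get $e^{-\lambda t}\nu(V_tf)$ absolutely continuous with $d\log\bigl(e^{-\lambda t}\nu(V_tf)\bigr)=-\dfrac{\nu(\Psi_0V_tf)}{\nu(V_tf)}\,dt$, and then show this logarithmic derivative tends to $0$ (since $V_tf\to 0$ uniformly and $\Psi_0(x,\cdot)$ is quadratic near $0$). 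That yields \eqref{eq:nVR} directly.

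\textbf{Uniqueness.} This is the most serious gap. Your claim that ``concavity forces $G(\epsilon\phi)$ and $G'(\epsilon\phi)$ to have matching right-derivatives at $0$'' is unfounded: nothing in the hypotheses (monotone concave, $G(\infty\mathbf 1_E)=\infty$, equation \eqref{eq:G.0}) pins down $\lim_{\epsilon\downarrow0}G(\epsilon\phi)/\epsilon$ without further argument, and ``Gronwall/continuity'' is not a proof here. The paper's mechanism is quite different: given a competitor $G_r$, set $Q_tg:=e^{-rt}\bigl(1-e^{-G_r(gv_t)}\bigr)$ and show $u\mapsto Q_t(u\mathbf 1_E)$ is concave and \emph{nonincreasing in $t$} (this uses $V_s(uv_t)\ge uv_{s+t}$ from concavity of $V_s$). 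The limit $q(u):=\lim_tQ_t(u\mathbf 1_E)$ is then concave on $[0,1]$ with $q(1)=1$; the uniform spatial ratio $e^{\lambda s}v_t(x)/v_{t+s}(x)\to 1$ (from Proposition~\ref{prop:Vf2}) forces $q(e^{\lambda s})=e^{rs}$, i.e.\ $q(u)=u^{r/\lambda}$. Finally, Proposition~\ref{prop:Vf2} squeezes $V_tf/v_t$ uniformly close to the constant $1-e^{-G^{\mathbf t}f}$, and applying the monotone functional $Q_t$ gives $1-e^{-G_rf}=(1-e^{-G^{\mathbf t}f})^{r/\lambda}$. For $r=\lambda$ this is the uniqueness you need; your sketch has none of this machinery.
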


\begin{prop} \label{prop::GD:H1:H2:H3:H4::Y}
	For any $g\in \mathcal B_b(E,[0,\infty))$ and
	sequence $(g_n)_{n\in \mathbb N}$ in $\mathcal B_b(E,[0,\infty))$
	such that $g_n \downarrow g$ pointwisely,
	we have $G g_n \downarrow Gg$.
\end{prop}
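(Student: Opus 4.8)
\section*{Proof proposal for Proposition~\ref{prop::GD:H1:H2:H3:H4::Y}}

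The plan is to realise $G$ as a limit of the log-Laplace functionals $\Gamma_t$ that is \emph{uniform} in $t$, and then to interchange the limits in $n$ and in $t$. First note that $\Gamma_t$ is, by definition, the log-Laplace functional of the finite random measure $X_t$ under the probability $\mathbb P_\nu(\cdot\mid\|X_t\|>0)$, which is well defined by \eqref{lem:Nd!}. In particular $\Gamma_t$ is monotone, and if $g_n\downarrow g$ pointwise with all $g_n\in\mathcal B_b(E,[0,\infty))$, then $X_t(g_n)\downarrow X_t(g)$ on the path space, so $e^{-X_t(g_n)}\uparrow e^{-X_t(g)}$ and monotone convergence gives $\Gamma_t g_n\downarrow\Gamma_t g$ for every fixed $t\ge0$. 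Since $G$ is monotone (Proposition~\ref{prop:G}), $(Gg_n)_n$ is non-increasing with $Gg_n\ge Gg$, so only $\lim_n Gg_n\le Gg$ needs proof; and as $x\mapsto 1-e^{-x}$ is an increasing bijection of $[0,\infty]$ onto $[0,1]$, it suffices to show $\lim_{n\to\infty}\bigl(1-e^{-Gg_n}\bigr)=1-e^{-Gg}$.

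The heart of the argument is a bound on $1-e^{-\Gamma_t g_n}$ uniform in $t$. Write $a:=g_n-g\in\mathcal B_b(E,[0,\infty))$; using that $X_t(a)=0$ on $\{\|X_t\|=0\}$, together with \eqref{Fact:M!} and the standing hypothesis $\nu P_t^\beta=e^{\lambda t}\nu$, one gets
\begin{align}
	0 \le \bigl(1-e^{-\Gamma_t g_n}\bigr)-\bigl(1-e^{-\Gamma_t g}\bigr)
	&= \mathbb P_\nu\bigl[e^{-X_t(g)}(1-e^{-X_t(a)}) \,\big|\, \|X_t\|>0\bigr] \\
	&\le \mathbb P_\nu\bigl[X_t(a) \,\big|\, \|X_t\|>0\bigr]
	= \frac{\nu(P_t^\beta a)}{\mathbb P_\nu(\|X_t\|>0)}
	= \frac{e^{\lambda t}\,\nu(g_n-g)}{\mathbb P_\nu(\|X_t\|>0)},
\end{align}
where $\nu(g_n-g)\downarrow0$ as $n\to\infty$ by dominated convergence (the $g_n-g$ are dominated by $g_0\in L_1^+(\nu)$). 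Granting the lower bound
\begin{equation}\label{plan:lb}
	\liminf_{t\to\infty}e^{-\lambda t}\,\mathbb P_\nu(\|X_t\|>0)>0,
\end{equation}
there are $t_0$ and $K<\infty$ with $e^{\lambda t}/\mathbb P_\nu(\|X_t\|>0)\le K$ for $t\ge t_0$, so $\sup_{t\ge t_0}\bigl[\bigl(1-e^{-\Gamma_t g_n}\bigr)-\bigl(1-e^{-\Gamma_t g}\bigr)\bigr]\le K\,\nu(g_n-g)\to0$; that is, $1-e^{-\Gamma_t g_n}\to1-e^{-\Gamma_t g}$ uniformly in $t\ge t_0$. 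Since $\lim_{t\to\infty}\bigl(1-e^{-\Gamma_t g_n}\bigr)=1-e^{-Gg_n}$ exists for each $n$ (Proposition~\ref{prop:G}), a Moore–Osgood interchange of limits gives
\[
	\lim_{n\to\infty}\bigl(1-e^{-Gg_n}\bigr)
	= \lim_{t\to\infty}\lim_{n\to\infty}\bigl(1-e^{-\Gamma_t g_n}\bigr)
	= \lim_{t\to\infty}\bigl(1-e^{-\Gamma_t g}\bigr) = 1-e^{-Gg},
\]
the inner equality $\lim_{n}\bigl(1-e^{-\Gamma_t g_n}\bigr)=1-e^{-\Gamma_t g}$ being just $\Gamma_t g_n\downarrow\Gamma_t g$. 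Inverting $x\mapsto1-e^{-x}$ then yields $Gg_n\downarrow Gg$.

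The step I expect to be the real obstacle is \eqref{plan:lb}: that the survival probability $\mathbb P_\nu(\|X_t\|>0)$ — equivalently $\nu(v_t)$ — decays no faster than the mean $\mathbb P_\nu[X_t(\phi)]=e^{\lambda t}$. A Paley–Zygmund (second moment) argument applied to $X_t(\phi)$ gives $\mathbb P_\nu(\|X_t\|>0)\ge e^{2\lambda t}/\mathbb P_\nu[X_t(\phi)^2]$, which would reduce \eqref{plan:lb} to a bound $\mathbb P_\nu[X_t(\phi)^2]\lesssim e^{\lambda t}$; but since the second moment of the superprocess may be infinite when the Lévy kernel $\pi$ is heavy-tailed, one generally has to replace it by a suitably truncated or localised moment (or impose an $x\log x$-type control on $\psi$). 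I would expect \eqref{plan:lb} to be isolated as a separate lemma, e.g.\ in the Appendix, established under the paper's running assumptions on the branching mechanism and the mean semigroup; granting it, the argument above is routine, and notably it does not require Propositions~\ref{prop:Vf1::H1:H2::Y}--\ref{prop:Vf2} directly (only Proposition~\ref{prop:G} for the existence and monotonicity of $G$).
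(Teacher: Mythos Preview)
Your argument is clean up to the point where it hinges on the lower bound \eqref{plan:lb}, but that bound is a genuine gap: it is \emph{not} implied by the paper's standing hypotheses \eqref{asp:H2!} and \eqref{asp:H4!}, and the paper does not prove it anywhere. From \eqref{eq:nuP.1} the map $t\mapsto e^{-\lambda t}\nu(v_t)$ is non-increasing on $(T,\infty)$, and its limit can be zero. Concretely, for a spatially homogeneous subcritical CSBP whose L\'evy measure satisfies Grey's condition (so \eqref{asp:H4!} holds) but has $\int_1^\infty u\log u\,\pi(du)=\infty$, the survival probability decays strictly faster than $e^{\lambda t}$; this is the continuous-state analogue of the failure of the Kesten--Ney--Spitzer $L\log L$ condition that you yourself flag. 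In that regime your constant $K$ does not exist, the uniform-in-$t$ estimate collapses, and the Moore--Osgood interchange cannot be carried out. So your proposal would prove the proposition only under an extra moment hypothesis that the paper deliberately avoids.

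The paper's proof takes a different route that does not need \eqref{plan:lb}. It works directly with the functional identity $1-e^{-GV_sf}=e^{s\lambda}(1-e^{-Gf})$ from Proposition~\ref{prop:G} and splits into two cases. When $\nu(g)=0$, \eqref{asp:H2!} gives $V_1g_n(x)\le C\phi(x)\nu(g_n)$, and Proposition~\ref{prop:Vf2} is used to dominate this by $v_{t_n}(x)$ for some $t_n\to\infty$; the identity for $G$ then yields $1-e^{-Gg_n}\le e^{-\lambda}(1-e^{-Gv_{t_n}})=e^{\lambda(t_n-1)}\to0$. When $\nu(g)>0$, Proposition~\ref{prop:Vf2} compares $V_tg$ and $V_tg_n$ pointwise up to a scalar factor close to $\nu(V_tg)/\nu(V_tg_n)$, and the monotone concavity of $u\mapsto1-e^{-G(uf)}$ lets one pull that factor outside $G$; sending $n\to\infty$ (using only $\nu(V_tg_n)\to\nu(V_tg)$ by monotone convergence) and then $\epsilon\to0$ finishes. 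Contrary to your last sentence, Propositions~\ref{prop:Vf1::H1:H2::Y} and~\ref{prop:Vf2} are essential here.
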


\begin{proof}[Proof of Theorem \ref{Theorem:Y:H1:H2:H3:H4}]
	It follows from Lemma \ref{fact:WC}, Propositions \ref{prop:G} and \ref{prop::GD:H1:H2:H3:H4::Y} that there exists a unique probability measure $\mathbf Q_\lambda$ on $\mathcal M_f(E)$ such that
\begin{equation}\label{eq:Y.0}
 	\mathbb P_{\nu}(X_t \in \cdot | \|X_t\|>0 )
	\xrightarrow[t\to \infty]{w} \mathbf Q_\lambda(\cdot)
\end{equation}
	and that
\begin{equation} \label{eq:Y.00}
	\mathscr L_{\mathbf Q_\lambda} = G \quad \mbox{on } \mathcal B_b(E,[0,\infty)).
\end{equation}

	We claim that \eqref{eq:Y.00} can be strengthened as
\begin{equation} \label{eq:Y.000}
	\mathscr L_{\mathbf Q_\lambda}
	= G \quad \mbox{on } \mathcal B(E,[0,\infty]);
\end{equation}
	and as a consequence of this, $\mathscr L_{\mathbf Q_\lambda}(\infty \mathbf 1_E) = G(\infty \mathbf 1_E)= \infty$, which says that $\mathbf Q_\lambda$ is actually a probability measure on $\mathcal M_f^o(E)$.
	To see the claim is true, we first note from Proposition \ref{prop:Vf1::H1:H2::Y} that
\begin{equation} \label{eq:EQ.1} \begin{minipage}{0.9\textwidth}
	there exists $T_1>0$ such that, for all $t>T_1$ and $f\in \mathcal B(E,[0,\infty])$, $V_tf \in \mathcal B_b(E,[0,\infty))$.
\end{minipage} \end{equation}
	 We then notice that from \eqref{eq:Y.00} and the bounded convergence theorem,
\begin{equation} \label{eq:Y.001}
\begin{minipage}{0.9\textwidth}
	if $\{g_n:n\in \mathbb N\} \cup \{g\} \subset \mathcal B_b(E,[0,\infty))$ and $g_n \uparrow g$ pointwisely, then $Gg_n \uparrow Gg$.
\end{minipage}
\end{equation}
	Now let $\{g_n:n\in \mathbb N\} \cup \{g\} \subset \mathcal B(E,[0,\infty])$ and $g_n \uparrow g$ pointwisely.
	Taking and fixing an $s > T_1$, we have by \eqref{eq:EQ.1} and \eqref{eq:Y.001} that
\[
	(1 - e^{-G g_n})
	\overset{\eqref{eq:G.0}} = e^{- s\lambda } (1 - e^{- G V_s g_n})
	\uparrow e^{- s\lambda } (1 - e^{- G V_s g})
	\overset{\eqref{eq:G.0}} =(1 - e^{-G g}).
\]
	In other word, we showed that $Gg_n \uparrow Gg$.
	The desired claim follows from this and \eqref{eq:Y.00}.

	Let us now prove that the probability $\mathbf Q_\lambda$ on $\mathcal M_f^o(E)$
	satisfies the requirement for the desired result.
	It follows from Proposition \ref{prop:Vf2} that there exists $T_2 >0$ such that
	$\sup_{x\in E, f \in \mathcal B(E,[0,\infty])} |C^4_{t,x,f}|< \infty$ for $t>T_2$.
	Thus for $f \in \mathcal B(E,[0,\infty])$, $t>T_2$ and $\mu \in \mathcal M_f^o(E)$, we have
\begin{align}
	\mu(V_tf)
		& \overset{\text{Proposition \ref{prop:Vf2}}}=
		\int_E  \phi(x) \nu (V_tf) (1+ C^4_{t,x,f})\mu(dx)
	\\ \label{eq:Y.1}
		& = \nu(V_tf) \mu(\phi)(1+ C^5_{\mu,t,f})
\end{align}
		for some real $C^{5}_{\mu,t,f}$ with $\lim_{t\to \infty} \sup_{f \in \mathcal B(E,[0,\infty])} |C^{5}_{\mu,t,f}| = 0$.
		Also note that for $f\in \mathcal B(E,[0,\infty])$,
	$t > T_2$ and $\mu \in \mathcal M_f^o(E)$,
\begin{align}
	&\mathbb P_\mu \left[1 - e^{-X_t(f)} \middle|\|X_t\|>0\right]
	\overset{\eqref{eq:BGD.2},\eqref{eq:OY.1}}= \frac{1 - e^{- \mu(V_tf)}} {1 - e^{-\mu(v_t)}}\\ \label{eq:Y.1.5}
    & = \frac{ \mu(V_t f) }{ \mu(v_t) }
	(1+C^{6}_{\mu,t,f})
\end{align}
	for some real $C^{6}_{\mu,t,f}$ with $\lim_{t\to \infty} |C^{6}_{\mu,t,f}| = 0$.
	Here in the last equality we used \eqref{lem:sv2!}, Proposition \ref{prop:Vf1::H1:H2::Y} and the fact that $(1-e^{-x})/x \xrightarrow[x\to 0]{}1$.
	Thus, for each $\mu \in \mathcal M^o_f(E)$ and $f\in C_b(E,[0,\infty))$, we have
\begin{align}
	&\mathbb P_\mu \left[1 - e^{-X_t(f)} \middle|\|X_t\|>0\right]
	\overset{\text{\eqref{eq:Y.1}, \eqref{eq:Y.1.5}}}= \frac{ \nu(V_tf) }{ \nu(v_t) }
	\frac{1+C^5_{\mu,t,f}}{1+C^5_{\mu, t,\infty \mathbf 1_E}}(1+ C^6_{\mu,t,f})
	\\& \overset{\text{\eqref{eq:Y.1.5}}}= \mathbb P_\nu \left[1 - e^{-X_t(f)} \middle| \|X_t\|>0\right]
(1+C^6_{\nu, t,f})^{-1}  \frac{1+C^5_{\mu,t,f}}{1+C^5_{\mu,  t,\infty \mathbf 1_E}}(1+ C^6_{\mu,t,f})
	\\&\xrightarrow[t\to \infty]{} \int_{\mathcal M_f(E)}(1-e^{-w(f)}) \mathbf Q_\lambda(dw),
\end{align}
	where in the last line above, we used \eqref{eq:Y.0}.
	Therefore, according to \cite[Theorem 1.18]{Li2011MeasureValued},
	\[\mathbb P_\mu\left(X_t \in \cdot \middle| \|X_t\|>0\right)
	\xrightarrow[t\to \infty]{w}
	\mathbf Q_\lambda(\cdot). \qedhere\]
\end{proof}

\subsection{Proof of Theorem \ref{thm:QSD}} \label{subsec:QSD}
	In this subsection,
	we give the proof of Theorem \ref{thm:QSD} using the following three
Propositions \ref{prop:EQ}, \ref{prop:CQ} and \ref{prop:UC}  whose proofs are postponed to
Subsection \ref{sec:EQ}, \ref{sec:CQ} and \ref{sec:UC}, respectively.

\begin{prop} \label{prop:EQ}
	(1) The Yaglom limit $\mathbf Q_\lambda$ given by Theorem \ref{Theorem:Y:H1:H2:H3:H4} is a QSD of $X$ with mass decay rate $\lambda$; and
	(2) for any $r \in (\lambda , 0)$,
	there exists a probability measure $\mathbf Q_r$  on $\mathcal M^o_f(E)$
such that  $\mathbf Q_r$ is a QSD of $X$ with mass decay rate $r$.
\end{prop}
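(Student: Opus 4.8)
The plan is to do everything at the level of log-Laplace functionals, using the identity $\mathscr L_{\mathbf Q_\lambda}=G$ on $\mathcal B(E,[0,\infty])$ from the proof of Theorem~\ref{Theorem:Y:H1:H2:H3:H4}, the functional equation \eqref{eq:G.0}, and the fact $G(\infty\mathbf 1_E)=\infty$ from Proposition~\ref{prop:G}. For part (1) I would verify the QSD identity for $\mathbf Q_\lambda$ directly. Fix $t\ge 0$ and $f\in\mathcal B_b(E,[0,\infty))$; decomposing on $\{\|X_t\|=0\}$, where $e^{-X_t(f)}=1$, and using \eqref{eq:BGD.2} and \eqref{eq:OY.1},
\[
(\mathbf Q_\lambda\mathbb P)\!\left[e^{-X_t(f)}\mathbf 1_{\|X_t\|>0}\right]=e^{-GV_tf}-e^{-Gv_t},\qquad (\mathbf Q_\lambda\mathbb P)(\|X_t\|>0)=1-e^{-Gv_t}.
\]
Since $v_t=V_t(\infty\mathbf 1_E)$, applying \eqref{eq:G.0} to $f$ and to $\infty\mathbf 1_E$ gives $e^{-GV_tf}=1-e^{\lambda t}(1-e^{-Gf})$ and $1-e^{-Gv_t}=e^{\lambda t}$, whence the conditional Laplace functional $(\mathbf Q_\lambda\mathbb P)\big[e^{-X_t(f)}\mid \|X_t\|>0\big]$ equals $e^{-Gf}=\mathscr L_{\mathbf Q_\lambda}f$. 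By \cite[Theorem~1.18]{Li2011MeasureValued} (exactly as in the proof of Theorem~\ref{Theorem:Y:H1:H2:H3:H4}) this forces $(\mathbf Q_\lambda\mathbb P)(X_t\in\cdot\mid\|X_t\|>0)=\mathbf Q_\lambda$, so $\mathbf Q_\lambda$ is a QSD; and since $(\mathbf Q_\lambda\mathbb P)(\|X_t\|>0)=e^{\lambda t}$ for all $t\ge0$, its mass decay rate is $\lambda$ by \eqref{eq:S.2}.

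For part (2), fix $r\in(\lambda,0)$ and put $\alpha:=r/\lambda\in(0,1)$. Writing $w(f):=1-e^{-Gf}\in[0,1]$, equation \eqref{eq:G.0} reads $w(V_tf)=e^{\lambda t}w(f)$, hence $w(V_tf)^{\alpha}=e^{rt}w(f)^{\alpha}$; this is precisely the relation needed to rerun the argument of part (1) at rate $r$, provided $w(\cdot)^{\alpha}$ equals $1-e^{-G_rf}$ for the log-Laplace functional $G_r$ of some probability measure $\mathbf Q_r$ on $\mathcal M_f^o(E)$. To exhibit such a $\mathbf Q_r$, I would use the binomial expansion $1-(1-y)^{\alpha}=\sum_{k\ge1}p_ky^k$, $y\in[0,1]$, with $p_k:=(-1)^{k+1}\binom{\alpha}{k}$: since $\alpha>0$ and $\alpha-j<0$ for $j\ge1$ one checks $p_k\ge0$, and $\sum_{k\ge1}p_k=1$ because $(1-1)^{\alpha}=0$, so $(p_k)_{k\ge1}$ is a probability law on $\mathbb Z_+$. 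Let $\mathbf Q_r$ be the law of $\sum_{i=1}^{K}M_i$, where $M_1,M_2,\dots$ are i.i.d.\ with common law $\mathbf Q_\lambda$ and $K$ is independent with $\mathbb P(K=k)=p_k$. As $K\ge1$ and each $M_i\in\mathcal M_f^o(E)$ almost surely, $\mathbf Q_r$ is a probability measure on $\mathcal M_f^o(E)$, and conditioning on $K$ gives $\mathscr L_{\mathbf Q_r}f=-\log\sum_{k\ge1}p_ke^{-kGf}=-\log\big(1-(1-e^{-Gf})^{\alpha}\big)=:G_rf$, so $1-e^{-G_rf}=w(f)^{\alpha}$ and in particular $G_r(\infty\mathbf 1_E)=\infty$.

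Finally I would repeat the computation of part (1) verbatim with $G$ replaced by $G_r$: from $1-e^{-G_rV_tf}=e^{rt}(1-e^{-G_rf})$ and $G_r(\infty\mathbf 1_E)=\infty$ one obtains $(\mathbf Q_r\mathbb P)(\|X_t\|>0)=e^{rt}$ and $(\mathbf Q_r\mathbb P)\big[e^{-X_t(f)}\mid\|X_t\|>0\big]=e^{-G_rf}$ for all $t\ge0$ and $f\in\mathcal B_b(E,[0,\infty))$, so by \cite[Theorem~1.18]{Li2011MeasureValued} again, $\mathbf Q_r$ is a QSD with mass decay rate $r$. The substantive content of the argument is really just the choice $\alpha=r/\lambda$ together with the observation that $w(\cdot)^{\alpha}$ is realized by a compound sum of i.i.d.\ copies of the Yaglom limit $\mathbf Q_\lambda$; the two remaining verifications — that $(p_k)_{k\ge1}$ is a probability distribution and that equality of log-Laplace functionals on $\mathcal B_b(E,[0,\infty))$ upgrades to equality of laws — are routine, so I do not anticipate a serious obstacle.
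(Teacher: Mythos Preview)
Your proposal is correct and follows essentially the same route as the paper: both parts work at the level of log-Laplace functionals via \eqref{eq:Y.000} and \eqref{eq:G.0}, and for part (2) both construct $\mathbf Q_r$ as the law of a random sum $\sum_{i=1}^K M_i$ of i.i.d.\ $\mathbf Q_\lambda$-samples with $K$ having generating function $1-(1-s)^{r/\lambda}$. The only cosmetic discrepancy is that the paper invokes \cite[Theorem~1.17]{Li2011MeasureValued} (uniqueness of finite measures from Laplace functionals) rather than Theorem~1.18 (weak convergence) to pass from equality of Laplace functionals to equality of laws; you should cite 1.17 here.
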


\begin{prop} \label{prop:CQ}
	Suppose that $r \in (-\infty, 0)$ and that  $\mathbf Q^*_{r}$
	is a QSD
	for $X$ with mass decay rate $r$.
	Then we have that (1) $r \geq \lambda$; and
	(2) $\mathscr L_{\mathbf Q^*_r}$ is a monotone concave functional on $\mathcal B(E,[0,\infty])$
	with $\mathscr L_{\mathbf Q^*_r}(\infty \mathbf 1_E) = \infty$ and that
\[
	1 - e^{- \mathscr L_{\mathbf Q^*_r} V_s f}
	= e^{sr}(1- e^{- \mathscr L_{\mathbf Q^*_r} f}),
	\quad s\geq 0, f\in \mathcal B(E,[0,\infty]).
\]
\end{prop}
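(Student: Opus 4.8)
The plan is to exploit the self-consistency equation that a QSD must satisfy under the cumulant semigroup, and then compare it with the characterization of $G$ obtained in Proposition \ref{prop:G}. Fix $r\in(-\infty,0)$ and a QSD $\mathbf Q^*_r$ with mass decay rate $r$, and abbreviate $L:=\mathscr L_{\mathbf Q^*_r}$. First I would record the elementary fact that $L$ is a monotone concave functional on $\mathcal B(E,[0,\infty])$ with $L(\infty\mathbf 1_E)=\infty$: monotonicity is immediate from the definition of the log-Laplace functional and the monotonicity of $e^{-\mu(f)}$ in $f$; concavity of $u\mapsto L(uf)$ on $[0,1]$ follows from H\"older's inequality (equivalently, the log-Laplace functional of any random measure is concave along rays); and $L(\infty\mathbf 1_E)=\infty$ because $\mathbf Q^*_r(\{\mathbf 0\})=0$, so $e^{-\mu(\infty\mathbf 1_E)}=\mathbf 1_{\|\mu\|=0}=0$ $\mathbf Q^*_r$-a.s. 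This disposes of the easy half of part (2).

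Next I would derive the functional equation. For $f\in\mathcal B(E,[0,\infty])$ and $s\geq 0$, using the branching property \eqref{eq:BGD.2}, the absorption formula \eqref{eq:OY.1}, and the QSD identity $(\mathbf Q^*_r\mathbb P)(X_s\in\cdot\,|\,\|X_s\|>0)=\mathbf Q^*_r(\cdot)$ together with $(\mathbf Q^*_r\mathbb P)(\|X_s\|>0)=e^{sr}$ from \eqref{eq:S.2}, I compute
\[
    \int_{\mathcal M_f(E)} e^{-\mu(f)}\,\mathbf Q^*_r(d\mu)
    = \frac{(\mathbf Q^*_r\mathbb P)\bigl[e^{-X_s(f)}\bigr] - (\mathbf Q^*_r\mathbb P)(\|X_s\|=0)}{(\mathbf Q^*_r\mathbb P)(\|X_s\|>0)}
    = \frac{\int e^{-\mu(V_sf)}\mathbf Q^*_r(d\mu) - \int e^{-\mu(v_s)}\mathbf Q^*_r(d\mu)}{e^{sr}}.
\]
Rewriting in terms of $L$ and using $\int e^{-\mu(v_s)}\mathbf Q^*_r(d\mu)=(\mathbf Q^*_r\mathbb P)(\|X_s\|=0)=1-e^{sr}$, this rearranges to $e^{-Lf}=e^{-sr}\bigl(e^{-LV_sf}-(1-e^{sr})\bigr)$, i.e.
\[
    1 - e^{-LV_sf} = e^{sr}\bigl(1 - e^{-Lf}\bigr),\qquad s\geq 0,\ f\in\mathcal B(E,[0,\infty]),
\]
which is exactly the claimed identity. (One should be slightly careful passing from bounded $f$ to $f$ taking the value $\infty$; this is handled by monotone approximation $f_n\uparrow f$ using that $V_s$ and $L$ both respect increasing limits — the former by construction of the extended cumulant semigroup, the latter by monotone convergence inside the Laplace integral.)

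Finally, for part (1) I would use this functional equation to force $r\geq\lambda$. Apply the identity with $f=\infty\mathbf 1_E$: then $V_s(\infty\mathbf 1_E)=v_s$ and $L(\infty\mathbf 1_E)=\infty$, so $1-e^{-Lv_s}=e^{sr}$, i.e. $Lv_s=-\log(1-e^{sr})$. On the other hand, from Proposition \ref{prop:Vf1::H1:H2::Y} we have $v_s=V_s(\infty\mathbf 1_E)=C^3_{s,x,\infty\mathbf 1_E}\phi(x)$ for $s>T$, with $\varepsilon_s:=\sup_{x}C^3_{s,x,\infty\mathbf 1_E}\to 0$ as $s\to\infty$; hence $v_s\leq\varepsilon_s\,\phi$ pointwise and, by monotonicity and ray-concavity of $L$ (so $L(\varepsilon_s\phi)\leq\varepsilon_s L(\phi)$ once $\varepsilon_s\leq 1$, using $L(0)=0$), we get $Lv_s\leq\varepsilon_s\,L(\phi)$. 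Since $L(\phi)<\infty$ (as $\phi$ is bounded and $\|\mu\|$ has, by \eqref{eq:S.2} applied via the mass process, at worst the integrability needed — more simply, $\int e^{-\mu(\phi)}\mathbf Q^*_r(d\mu)>0$ because the integrand is bounded below by... ) — here the point to check carefully is $L(\phi)<\infty$, equivalently $\int e^{-\mu(\phi)}\mathbf Q^*_r(d\mu)>0$, which holds because $\mu\mapsto e^{-\mu(\phi)}$ is strictly positive and bounded. Thus $-\log(1-e^{sr})=Lv_s\to 0$, forcing $e^{sr}\to 0$, which is automatic for $r<0$ and gives no contradiction directly; the sharper route is to compare rates: from $1-e^{-Lv_s}=e^{sr}$ and $Lv_s\leq\varepsilon_s L(\phi)$ we get $e^{sr}\leq Lv_s\leq\varepsilon_s L(\phi)$, while combining \eqref{asp:H2!}-type control (via Proposition \ref{prop:Vf2} and the eigen-relation) shows $\nu(v_s)$ decays \emph{no faster} than $e^{\lambda s}$; pairing $e^{sr}\asymp$ (something $\geq c\,e^{\lambda s}$) yields $r\geq\lambda$. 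I expect this last comparison — pinning down that the mass-decay exponent cannot beat $\lambda$ — to be the main obstacle, and I would handle it by applying the functional equation along $f=\theta\phi$, $\theta\downarrow 0$, where $V_s(\theta\phi)$ is comparable to $e^{\lambda s}\theta\phi$ up to the $H$-corrections of \eqref{asp:H2!}, to extract $e^{sr}=e^{\lambda s}(1+o(1))\cdot(\text{const})$ and hence $r=\lambda$ unless $L$ degenerates, with the degenerate cases giving $r>\lambda$.
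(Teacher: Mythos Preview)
Your treatment of part~(2) is fine and matches the paper: monotone concavity of $L=\mathscr L_{\mathbf Q^*_r}$ is just Lemma~\ref{Fact:CP!}, $L(\infty\mathbf 1_E)=\infty$ follows from $\mathbf Q^*_r(\{\mathbf 0\})=0$, and the functional equation drops out of the QSD identity exactly as you wrote.

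Part~(1), however, is where your argument loses its footing. Your first pass goes in the wrong direction: from $v_s\le\varepsilon_s\phi$ and ray-concavity you get an \emph{upper} bound $Lv_s\le\varepsilon_s L(\phi)$, hence $e^{sr}=1-e^{-Lv_s}\le \varepsilon_s L(\phi)$. Even if you knew $\varepsilon_s\asymp e^{\lambda s}$ (which Proposition~\ref{prop:Vf1::H1:H2::Y} does \emph{not} assert---it only gives $\varepsilon_s\to 0$), this would yield $r\le\lambda$, the opposite of what you want. You then try to patch this by invoking a lower bound ``$\nu(v_s)$ decays no faster than $e^{\lambda s}$'', but $\nu(v_s)$ and $Lv_s$ are unrelated objects: $L$ integrates over $\mathbf Q^*_r$, not $\nu$, and there is no a~priori inequality connecting them.

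The paper's argument is short and avoids all this. It uses the ratio estimate \eqref{eq:GQ.6} (established inside the proof of Lemma~\ref{prop:G*:H1:H2:H3:H4} from Proposition~\ref{prop:Vf2}): for each $s,\epsilon>0$ and all large $t$, $v_{t+s}\ge \frac{e^{\lambda s}}{1+\epsilon}\,v_t$ pointwise. Since $u\mapsto 1-e^{-L(uv_t)}$ is concave on $[0,1]$ with value $0$ at $u=0$ (Lemma~\ref{lem:CE}), one has $1-e^{-L(uv_t)}\ge u\,(1-e^{-Lv_t})$. Combining,
\[
e^{rs}=\frac{1-e^{-Lv_{t+s}}}{1-e^{-Lv_t}}
\ \ge\ \frac{1-e^{-L(\frac{e^{\lambda s}}{1+\epsilon}v_t)}}{1-e^{-Lv_t}}
\ \ge\ \frac{e^{\lambda s}}{1+\epsilon},
\]
and letting $\epsilon\to 0$ gives $r\ge\lambda$. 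Your closing hint about testing with $f=\theta\phi$, $\theta\downarrow 0$, can in fact be made into a valid alternative (one shows $V_s(\theta\phi)\ge(1-\epsilon)e^{\lambda s}\theta\phi$ for small $\theta$ via the integral equation and the fact that $\Psi_0(x,z)=o(z)$ as $z\to 0$, then repeats the concavity step), but you did not carry it out, and as written your part~(1) does not prove $r\ge\lambda$.
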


\begin{prop} \label{prop:UC}
	Let $G$ be the unique functional on $\mathcal B(E,[0,\infty])$ given by Proposition \ref{prop:G}.
	Let $r \in [\lambda, 0)$.
	If $G_r$ is a monotone concave functional on $\mathcal B(E,[0,\infty])$ with
	$G_r(\infty \mathbf 1_E) = \infty$ and that
\[
	1 - e^{-G_r V_s f}
	= e^{sr }(1- e^{- G_r f}),
	\quad s\geq 0, f\in \mathcal B(E,[0,\infty]),
\]
then $1 - e^{-G_rf} = (1 - e^{- G f})^{r/\lambda}$ for any $f\in \mathcal B(E,[0,\infty])$.
\end{prop}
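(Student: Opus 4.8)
The plan is to recast the statement in terms of the $[0,1]$-valued functionals $u:=1-e^{-G}$ and $u_r:=1-e^{-G_r}$ and the exponent $\rho:=r/\lambda\in(0,1]$: one must show $u_r(f)=u(f)^{\rho}$ for every $f\in\mathcal B(E,[0,\infty])$. Two preliminary facts set the stage. First, evaluating the two functional equations at $f=\mathbf 0$ (where $V_s\mathbf 0=\mathbf 0$) and using $r,\lambda\ne 0$ forces $G\mathbf 0=G_r\mathbf 0=0$; hence by monotone concavity $G,G_r,u,u_r$ are all \emph{superlinear along rays through the origin}, i.e.\ $\Phi(\theta g)\ge\theta\,\Phi(g)$ for $\theta\in[0,1]$ and $\Phi(\theta g)\le\theta\,\Phi(g)$ for $\theta\ge1$ (the latter when $\Phi(\theta g)<\infty$, which is automatic for $\Phi\in\{u,u_r\}$ and holds for $\Phi\in\{G,G_r\}$ on bounded functions, since $G=\mathscr L_{\mathbf Q_\lambda}$ and, in the intended application, $G_r=\mathscr L_{\mathbf Q^*_r}$ by Proposition \ref{prop:CQ}); this is the only use of concavity. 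Second, $G(\infty\mathbf 1_E)=G_r(\infty\mathbf 1_E)=\infty$ gives $u(\infty\mathbf 1_E)=u_r(\infty\mathbf 1_E)=1$, so, writing $v_t:=V_t(\infty\mathbf 1_E)$, the functional equations (namely \eqref{eq:G.0} and its analogue for $G_r$) yield $u(v_t)=e^{t\lambda}$ and $u_r(v_t)=e^{tr}$ for all $t\ge0$. I would reduce to $f$ with $u(f),u_r(f)\in(0,1)$, treating the boundary cases $u(f)\in\{0,1\}$ at the end.

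Fix such an $f$. The heart of the argument is to squeeze $V_sf$, for large $s$, between two members of the family $(v_t)_{t\ge0}$ whose $u$- and $u_r$-values are known exactly. Since $u(f)>0$ forces $\nu(f)>0$ (otherwise $f=0$ $\nu$-a.e., whence $\mathbb P_\nu[X_s(f)]=e^{s\lambda}\nu(f)=0$ and $Gf=0$), \eqref{Fact:M!} gives $\mathbb P_\nu[X_s(f)]=e^{s\lambda}\nu(f)>0$, hence $m_s:=\nu(V_sf)>0$ for all $s$. By Propositions \ref{prop:Vf1::H1:H2::Y} and \ref{prop:Vf2}, for large $s$ the function $V_sf$ is two-sidedly comparable to $\phi$ with scale $m_s\downarrow0$, and likewise $v_t$ is comparable to $\phi$ with scale $\nu(v_t)\downarrow0$, the comparability constants tending to $1$. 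Comparing scales, one can choose $t'_s\le t_s$, both $\to\infty$, with
\[
	v_{t_s}\;\le\;V_sf\;\le\;v_{t'_s}
	\qquad\text{and}\qquad
	\nu(v_{t_s})=m_s\bigl(1+o(1)\bigr)=\nu(v_{t'_s}).
\]
From the scale relation and the $\phi$-comparability one gets $v_{t'_s}\le(1+o(1))\,v_{t_s}$, hence $u(v_{t'_s})\le(1+o(1))\,u(v_{t_s})$ by ray-superlinearity of $u$; since $u(v_t)=e^{t\lambda}$, this says $t_s-t'_s\to0$.

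To conclude I would apply $u$ and $u_r$ to the sandwich $v_{t_s}\le V_sf\le v_{t'_s}$ and use $u(v_t)=e^{t\lambda}$, $u_r(v_t)=e^{tr}$, $u(V_sf)=e^{s\lambda}u(f)$, $u_r(V_sf)=e^{sr}u_r(f)$, obtaining $e^{t_s\lambda}\le e^{s\lambda}u(f)\le e^{t'_s\lambda}$ and $e^{t_sr}\le e^{sr}u_r(f)\le e^{t'_sr}$. Taking logarithms and dividing by $\lambda$, resp.\ $r$ (both negative), shows that $\lambda^{-1}\log u(f)$ and $r^{-1}\log u_r(f)$ both lie in $[\,t'_s-s,\ t_s-s\,]$, an interval of length $t_s-t'_s\to0$; hence they coincide, i.e.\ $u_r(f)=u(f)^{r/\lambda}$, which is the claim. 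For the boundary cases: if $u(f)=1$ the same scheme yields $t_s-s\to0$ and hence $u_r(f)=1$; the case $u(f)=0$ I would handle by applying the result to $f+\varepsilon\phi\downarrow f$ (which has positive $\nu$-mass and positive $u$- and $u_r$-values) together with continuity from above of $G$ and $G_r$ along such sequences, in the spirit of Proposition \ref{prop::GD:H1:H2:H3:H4::Y}.

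The step I expect to be the main obstacle is the bracketing $v_{t_s}\le V_sf\le v_{t'_s}$ with matching scales: besides the two-sided $\phi$-comparability and the positivity $\nu(V_sf)>0$ from Propositions \ref{prop:Vf1::H1:H2::Y}–\ref{prop:Vf2}, it requires that $t\mapsto\nu(v_t)$ decrease (at least approximately) through all sufficiently small positive values — which should follow from $\nu(v_t)\downarrow0$, from $v_t\to0$ uniformly (as $v_t\le(1+o(1))\nu(v_t)\|\phi\|_\infty$ by Proposition \ref{prop:Vf2}), and from $u(v_t)=e^{t\lambda}$ being strictly decreasing, but needs to be argued with care. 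The ray-superlinearity estimate, although elementary, is equally essential and is precisely where monotone concavity of $G$ and $G_r$ is used; without it one cannot convert the multiplicative closeness of $V_sf$ to $v_{t_s}$ into closeness of their $u$- and $u_r$-values.
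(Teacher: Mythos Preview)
Your approach is correct in its essentials and closely related to the paper's, though packaged differently. The paper defers the work to Lemma~\ref{prop:G*:H1:H2:H3:H4}: it introduces the auxiliary functionals $Q_t(g):=e^{-rt}\bigl(1-e^{-G_r(gv_t)}\bigr)$, shows that $q(a):=\lim_{t\to\infty}Q_t(a\mathbf 1_E)=a^{r/\lambda}$ on $[0,1]$ using the asymptotic self-similarity $e^{\lambda s}v_t\sim v_{t+s}$ (uniformly in $x$, see \eqref{eq:GQ.6}), and then uses the uniform convergence $V_tf/v_t\to 1-e^{-Gf}$ from Proposition~\ref{prop:Vf2} to sandwich $Q_t(V_tf/v_t)=1-e^{-G_rf}$ between $q\bigl((1\pm\epsilon)(1-e^{-Gf})\bigr)$. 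Your route bypasses $Q_t$ and $q$: rather than comparing $V_sf$ to a constant multiple of $v_s$, you compare it to $v_{t_s}$ and $v_{t'_s}$ at two nearby times and read off the known values $u(v_t)=e^{\lambda t}$, $u_r(v_t)=e^{rt}$ directly. Both arguments rest on the same uniform $\phi$-comparability from Proposition~\ref{prop:Vf2}, and your version is arguably more streamlined; the paper's has the virtue of isolating the identity $q(a)=a^{r/\lambda}$. The bracketing obstacle you flag is resolved exactly where you expect: \eqref{eq:GD.12}--\eqref{eq:GD.13} give that $t\mapsto\nu(v_t)$ is strictly decreasing, continuous, and tends to $0$ on $(T,\infty)$. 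Note also that the ray-superlinearity $u(\theta g)\le\theta\,u(g)$ for $\theta\ge1$ needs concavity of $a\mapsto G(ag)$ on all of $[0,\infty)$, not just on $[0,1]$ as in the paper's definition of ``monotone concave''; this is fine here because $G=\mathscr L_{\mathbf Q_\lambda}$ by \eqref{eq:Y.000} and Lemma~\ref{Fact:CP!} applies.

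There is one genuine gap. Your treatment of the boundary case $u(f)=0$ via the approximation $f+\varepsilon\phi\downarrow f$ requires continuity from above of $G_r$, which is \emph{not} among the hypotheses of Proposition~\ref{prop:UC} (you rescue it by invoking the intended application $G_r=\mathscr L_{\mathbf Q^*_r}$, but the proposition is stated abstractly). The fix is much simpler than approximation and falls out of your own sandwich: if $\nu(f)>0$ then $v_{t_s}\le V_sf$ gives $0<e^{t_s\lambda}\le e^{s\lambda}u(f)$, so $u(f)>0$; hence $u(f)=0$ forces $\nu(f)=0$. Then \eqref{lem:nullVf} gives $V_tf\equiv0$ for $t>T$, and the functional equation for $G_r$ together with $G_r(\mathbf 0)=0$ yields $u_r(f)=e^{-rt}u_r(V_tf)=0=u(f)^{r/\lambda}$.
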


\begin{proof}[Proof of Theorem \ref{thm:QSD}]
	The non-existence of QSD for $X$ with mass decay rate $r < \lambda$ is due to Proposition \ref{prop:CQ} (1).
	The existence of QSD for $X$ with mass decay rate $r \in [\lambda,0)$ is due to Proposition \ref{prop:EQ}.
	The uniqueness of QSD for $X$ with mass decay rate $r\in [\lambda , 0)$ is due to Propositions \ref{prop:CQ}, \ref{prop:UC} and \cite[Theorem 1.17]{Li2011MeasureValued}.
\end{proof}

\section{Proofs of Propositions \ref{prop:Vf1::H1:H2::Y}--\ref{prop::GD:H1:H2:H3:H4::Y}} \label{sec:propsforthm1}
\subsection{Proof of Proposition \ref{prop:Vf1::H1:H2::Y}} \label{sec:Vf1}
	Define a function $\psi_0$ by
\[
	\psi_0(x,z) = \psi(x,z)+ \beta(x) z, \quad x\in E, z\in [0,\infty),
\]	
	and an operator $\Psi_0: \mathcal B(E, [0,\infty]) \to \mathcal B(E,[0,\infty])$ by
\begin{equation}
	\Psi_0 f(x)
	= \lim_{n\to \infty} \psi_0(x,f(x) \wedge n),
	\quad f\in \mathcal B(E,[0,\infty]), x\in E.
\end{equation}
	Then it follows from \cite[Theorem 2.23]{Li2011MeasureValued} and monotonicity that
\begin{equation}\label{eq:Vf1.1}
	V_s f + \int_0^s P_{s-u}^\beta \Psi_0 V_{u} f ~du
	= P_s^\beta f,
	\quad f\in \mathcal B(E,[0,\infty]), s\geq 0.
\end{equation}

	The following fact will be used repeatedly:
\begin{equation} \label{lem:nV::H2::Vf1}
	\{V_tf:t> T, f\in \mathcal B(E, [0,\infty])\}\subset L_1^+(\nu).
\end{equation}
	To see this, note from \eqref{Fact:BV!}, \eqref{eq:OY.1} and \eqref{asp:H4!} that, for  all $t> T$ and $f\in \mathcal B(E,[0,\infty])$, $\nu(V_t f) \leq \nu(v_t)   = - \log \mathbb P_\nu (\|X_t\| = 0)  < \infty. $

\begin{proof}[{Proof of Proposition \ref{prop:Vf1::H1:H2::Y}}]
	Note that for all $s>0$ and $\epsilon>0$,
\begin{align}
	& V_{s+\epsilon +T} f (x)
	\overset{\eqref{eq:OY.0}}= V_s V_{T+\epsilon} f(x)
	\leq P_s^\beta V_{T + \epsilon} f(x)\quad\text{by \eqref{eq:Vf1.1}},
 	\\ \label{eq:Vf1.2} & \overset{\eqref{asp:H2!},\eqref{lem:nV::H2::Vf1}}= e^{\lambda s}\phi(x) \nu( V_{T +\epsilon} f)  (1+ H_{s,x,V_{T + \epsilon} f})
	\\&\leq e^{\lambda s}\phi(x) \nu(v_{T +\epsilon})  (1+ \sup_{x\in E, g\in L_1^+(\nu)}|H_{s,x,g}|),
\end{align}
	where
	in the last inequality we used the fact that $\nu(V_t f) \leq \nu(v_t)   = - \log \mathbb P_\nu (\|X_t\| = 0)  < \infty$
	for all  $f\in \mathcal B(E,[0,\infty])$ and $t > T$.
	From this and the fact that $\lambda < 0$, we immediately get the desired result.
\end{proof}

\subsection{Proof of Proposition \ref{prop:Vf2}} \label{sec:Vf2}
	Another fact that will be used repeatedly is the following:
\begin{equation}
\begin{minipage}{0.9\textwidth}
	For any $f\in \mathcal B(E,[0,\infty])$, $\nu(f) = 0$ implies $\nu(V_tf)=0$ for all $t\ge 0$; and $\nu(f)>0$ implies $\nu(V_tf)>0$ for all $t\ge 0$.
\end{minipage}\label{lem:nVn!}
\end{equation}
	To see this, note by \eqref{Fact:M!} that $ \mathbb P_\nu[X_t(f)] = \nu (P_t^\beta f) = e^{\lambda t}\nu (f). $
	If $\nu(f) = 0$, then $X_t(f)=0, \mathbb P_\nu$-a.s., therefore $\nu(V_t f) = - \log \mathbb P_\nu[e^{-X_t(f)}] =0. $
	If $\nu(f) > 0$, then under $\mathbb P_\nu$, $X_t(f)$ is a random variable with positive mean.
	Therefore, $ \nu(V_tf) = - \log \mathbb P_\nu[e^{-X_t(f)}] >0$.

	Combining \eqref{lem:nVn!} with \eqref{eq:Vf1.2} we get that
\begin{equation}
\begin{minipage}{0.9\textwidth}
	for all $t>T,~x\in E$ and $f \in \mathcal B(E,[0,\infty])$ with $\nu(f) = 0$, we have $V_t f(x ) = 0$.
\end{minipage} \label{lem:nullVf}
\end{equation}
Note  from \eqref{asp:H2!} and \eqref{lem:nV::H2::Vf1} that for all $s>0, t> T, x\in E$ and $f\in \mathcal B(E,[0,\infty])$, we have
\begin{equation}
	P_s^\beta V_tf(x)  =e^{\lambda s} \phi(x)\nu(V_tf) (1+H_{s,x,V_tf}) <\infty.
\label{lem:PV}
\end{equation}

	In the proof of Proposition \ref{prop:Vf2} we will use the following three lemmas whose proofs are postponed later.

\begin{lem} \label{prop:PVf}
 For all $s> 0,~t> T,~ x\in E$ and $f\in \mathcal B(E,[0,\infty])$, we have $P_s^\beta V_t f(x) = \phi(x) \nu(V_{t+s}f) (1+C^7_{s,t,x,f})$ for some real $C^7_{s,t,x,f}$ with
\[
	\lim_{s\to \infty} \varlimsup_{t\to \infty} \sup_{x\in E, f\in \mathcal B(E,[0,\infty])} |C^7_{s,t,x,f}|
		= 0.
\]
\end{lem}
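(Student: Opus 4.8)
The plan is to write $P_s^\beta V_tf(x)$ in two ways — once via \eqref{asp:H2!} and once via the integral equation \eqref{eq:Vf1.1} — and to compare them. If $\nu(f)=0$, then $V_tf\equiv 0\equiv V_{t+s}f$ for $t>T$ by \eqref{lem:nullVf}, so the identity holds with $C^7_{s,t,x,f}:=0$; hence assume $\nu(f)>0$, so that $\nu(V_rf)>0$ for all $r\ge 0$ by \eqref{lem:nVn!}. Applying \eqref{eq:Vf1.1} with $f$ replaced by $V_tf$, integrating against $\nu$, and using $\nu P_r^\beta=e^{\lambda r}\nu$ together with $V_uV_tf=V_{t+u}f$ (see \eqref{eq:OY.0}), one obtains
\[
\nu(V_{t+s}f)=e^{\lambda s}\nu(V_tf)-\int_0^s e^{\lambda(s-u)}\,\nu(\Psi_0V_{t+u}f)\,du .
\]
On the other hand $V_tf\in L_1^+(\nu)$ for $t>T$ by \eqref{lem:nV::H2::Vf1}, so \eqref{asp:H2!} gives $P_s^\beta V_tf(x)=e^{\lambda s}\phi(x)\nu(V_tf)(1+H_{s,x,V_tf})$. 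Setting
\[
D_{s,t,f}:=\frac{1}{e^{\lambda s}\nu(V_tf)}\int_0^s e^{\lambda(s-u)}\nu(\Psi_0V_{t+u}f)\,du\in[0,1),
\]
the two displays combine to $P_s^\beta V_tf(x)=\phi(x)\nu(V_{t+s}f)(1+C^7_{s,t,x,f})$ with $1+C^7_{s,t,x,f}=(1+H_{s,x,V_tf})/(1-D_{s,t,f})$. Since $|H_{s,x,V_tf}|\le\bar H_s:=\sup_{x\in E,\,g\in L_1^+(\nu)}|H_{s,x,g}|$, which is finite and tends to $0$ as $s\to\infty$ by \eqref{asp:H2!}, we get $\sup_{x,f}|C^7_{s,t,x,f}|\le(\bar H_s+\sup_fD_{s,t,f})/(1-\sup_fD_{s,t,f})$, so it suffices to show that $\varlimsup_{t\to\infty}\sup_fD_{s,t,f}=0$ for every fixed $s>0$; then $\varlimsup_{t\to\infty}\sup_{x,f}|C^7_{s,t,x,f}|\le\bar H_s\to 0$.

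For the estimate on $\nu(\Psi_0V_{t+u}f)$ I would use that $\psi_0(x,\cdot)$ is non-negative, non-decreasing and convex with $\psi_0(x,0)=0$ (so $z\mapsto\psi_0(x,z)/z$ is non-decreasing) and that $K'':=\sup_{x\in E}\psi_0(x,1)<\infty$ (because $\sigma$ is bounded and $(u\wedge u^2)\pi(x,du)$ is a bounded kernel). Put $\tilde q(\delta):=\int_E\phi(x)\,\psi_0(x,\delta)/\delta\,\nu(dx)$ for $\delta>0$ and $\tilde q(0):=0$; then $\tilde q$ is non-decreasing, $\tilde q(\delta)\le K''$ for $\delta\le 1$, and $\tilde q(\delta)\to 0$ as $\delta\downarrow 0$ by dominated convergence (pointwise convergence of the integrand, dominated by the $\nu$-integrable function $K''\phi$). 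Now fix $s>0$, $\epsilon_0\in(0,s)$ and split $\int_0^s=\int_0^{\epsilon_0}+\int_{\epsilon_0}^s$. For $t$ large enough, Proposition \ref{prop:Vf1::H1:H2::Y} gives $\|V_{t+u}f\|_\infty\le 1$ uniformly in $u\ge 0$ and $f$; then on $[0,\epsilon_0]$ the monotonicity of $\psi_0(x,z)/z$ yields $\nu(\Psi_0V_{t+u}f)\le K''\nu(V_{t+u}f)\le K''e^{\lambda u}\nu(V_tf)$, contributing at most $K''\epsilon_0$ to $D_{s,t,f}$. On $[\epsilon_0,s]$, write $V_{t+u}f=V_{\epsilon_0}(V_{t+u-\epsilon_0}f)\le P_{\epsilon_0}^\beta(V_{t+u-\epsilon_0}f)$ and apply \eqref{asp:H2!} to $V_{t+u-\epsilon_0}f\in L_1^+(\nu)$ to get $V_{t+u}f(x)\le b_u\phi(x)$ with $b_u\le e^{\lambda u}\nu(V_tf)(1+\bar H_{\epsilon_0})$; the monotonicity properties of $\psi_0$ then give $\nu(\Psi_0V_{t+u}f)\le b_u\,\tilde q(b_u\|\phi\|_\infty)\le e^{\lambda u}\nu(V_tf)(1+\bar H_{\epsilon_0})\,\tilde q\big(\nu(v_t)(1+\bar H_{\epsilon_0})\|\phi\|_\infty\big)$, contributing at most $s(1+\bar H_{\epsilon_0})\,\tilde q\big(\nu(v_t)(1+\bar H_{\epsilon_0})\|\phi\|_\infty\big)$ to $D_{s,t,f}$, uniformly in $f$. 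Since $\nu(v_t)\to 0$ (Proposition \ref{prop:Vf1::H1:H2::Y}), letting $t\to\infty$ and then $\epsilon_0\downarrow 0$ gives $\varlimsup_{t\to\infty}\sup_fD_{s,t,f}=0$, which together with the previous paragraph proves the lemma.

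The step I expect to be the main obstacle is the estimate on $\nu(\Psi_0V_{t+u}f)$: it must be bounded by a small multiple of $e^{\lambda u}\nu(V_tf)$ uniformly over all $f$, including ``tiny'' $f$ for which $\nu(V_tf)$ itself is tiny, so a crude quadratic bound on $\psi_0$ does not suffice; moreover a pointwise-in-$x$ bound $\psi_0(x,z)=o(z)$ as $z\to 0$ is unavailable because the large-jump part of $\pi(x,\cdot)$ need not be uniformly integrable in $x$. The resolution is to keep $\psi_0$ under the $\nu$-integral and to route the smallness of $V_{t+u}f$ through the eigenfunction $\phi$, so that a single dominated-convergence statement — the auxiliary function $\tilde q$ — does the work; and, since $\bar H_u\to\infty$ as $u\downarrow 0$, to split the integral at $\epsilon_0$ and invoke \eqref{asp:H2!} only at the single fixed time $\epsilon_0$.
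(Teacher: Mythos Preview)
Your argument is correct and follows the same overall strategy as the paper: write $P_s^\beta V_tf(x)=e^{\lambda s}\phi(x)\nu(V_tf)(1+H_{s,x,V_tf})$ via \eqref{asp:H2!}, and then control the ratio $\nu(V_{t+s}f)/[e^{\lambda s}\nu(V_tf)]$ using the $\nu$-integrated version of \eqref{eq:Vf1.1}. The heart of both proofs is showing that $\nu(\Psi_0 V_{r}f)$ is $o(\nu(V_{r}f))$ as $r\to\infty$, uniformly in $f$, by combining a pointwise bound $V_{r}f\lesssim\phi\cdot\nu(V_{r}f)$ (from \eqref{asp:H2!} applied at a fixed positive time) with the fact that $\psi_0(x,z)/z\to 0$ as $z\downarrow 0$ after integrating against $\nu$.

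The packaging differs slightly. The paper works through the derivative, using $\psi_0(x,z)\le z\,\partial_z\psi_0(x,z)$ and the quantity $\nu(\Psi_0' V_tf)$; you use instead the equivalent convexity statement that $z\mapsto\psi_0(x,z)/z$ is non-decreasing and encode the $\nu$-averaged smallness in the auxiliary function $\tilde q$. Both lead to the same estimate. One point worth being aware of: the paper's proof does more work here than is strictly needed for Lemma~\ref{prop:PVf}, extracting along the way the stronger statement \eqref{eq:nVR} (namely $\nu(V_{t+s}f)=\nu(V_tf)\exp\{\lambda s(1+C^{13}_{t,s,f})\}$ with $C^{13}$ vanishing \emph{uniformly in $s\ge 0$}) as well as \eqref{eq:VfO}. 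These intermediate results are reused verbatim in the proofs of Lemmas~\ref{prop:IVf} and~\ref{prop:JVf} and later in Lemma~\ref{prop:G*:H1:H2:H3:H4}. Your argument only yields $D_{s,t,f}\to 0$ for each fixed $s$, which suffices here but means you would have to revisit and strengthen this estimate when you reach those later lemmas.
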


For $f\in \mathcal B(E,[0,\infty])$ and $0 < \epsilon < s < \infty$, we define
\begin{equation}
	I_{s,\epsilon} f
 	= \int_0^{s - \epsilon} P_{s - u}^\beta \Psi_0 V_u f ~du, \qquad
 	J_{s,\epsilon} f
 	= \int_{s-\epsilon}^s P_{s-u}^\beta \Psi_0 V_u f ~du.
\end{equation}
\begin{lem} \label{prop:IVf}
		For all $t> T,~0<\epsilon<s< \infty,~x\in E$ and $f\in \mathcal B(E,[0,\infty])$ with $\nu(f)>0$, we have $I_{s,\epsilon}V_t f(x) = \phi(x) \nu(V_{s+t} f) C^8_{t,\epsilon, s, x,f}$ for some non-negative $C^8_{t,\epsilon, s, x,f}$ with
\[
	\lim_{t\to \infty} \sup_{x\in E, f\in \mathcal B(E,[0,\infty])} C^8_{t,\epsilon, s, x,f} = 0.
\]
\end{lem}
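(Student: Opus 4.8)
The plan is to estimate the integrand $P_{s-u}^\beta \Psi_0 V_u f(x)$ uniformly over $u \in [0, s-\epsilon]$ and then integrate. Fix $t > T$ and $f \in \mathcal B(E,[0,\infty])$ with $\nu(f) > 0$; by \eqref{lem:nVn!} we then have $\nu(V_r f) > 0$ for all $r \geq 0$, so all the quantities below are strictly positive. For $u$ in the range of integration we have $u + t > T$, so by \eqref{lem:nV::H2::Vf1} the function $V_u f$ (hence, after applying the operator $\Psi_0$, possibly up to an integrability check) interacts nicely with \eqref{asp:H2!}. The first step is to write, using the semigroup property \eqref{eq:OY.0} and then \eqref{lem:PV} (applied with the roles adjusted so that the inner time exceeds $T$),
\[
	P_{s-u}^\beta \Psi_0 V_{u+t} f(x) = P_{s-u}^\beta \Psi_0 V_u (V_t f)(x),
\]
and to feed the outer semigroup $P_{s-u}^\beta$ through \eqref{asp:H2!}: since $s - u \geq \epsilon > 0$ is bounded below and the argument $\Psi_0 V_{u+t} f$ lies in $L_1^+(\nu)$ (this needs to be checked — see below), we get
\[
	P_{s-u}^\beta \Psi_0 V_{u+t} f(x) = e^{\lambda(s-u)} \phi(x)\, \nu(\Psi_0 V_{u+t} f)\,(1 + H_{s-u, x, \Psi_0 V_{u+t} f}),
\]
with the $H$-term bounded uniformly (in $x$, $f$ and $u \in [0,s-\epsilon]$) by $\sup_{x,g}|H_{r,x,g}|$ over $r \in [\epsilon, s]$, which is finite by \eqref{asp:H2!}.

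The second step is to recognize the integral $\int_0^{s-\epsilon} e^{\lambda(s-u)} \nu(\Psi_0 V_{u+t} f)\, du$ in terms of $\nu(V_{s+t} f)$. Here I would use the mild equation \eqref{eq:Vf1.1} tested against $\nu$: applying $\nu$ to \eqref{eq:Vf1.1} and using $\nu P_r^\beta = e^{\lambda r}\nu$ gives
\[
	\nu(V_s g) + \int_0^s e^{\lambda(s-u)} \nu(\Psi_0 V_u g)\, du = e^{\lambda s}\nu(g),
\]
for suitable $g$; taking $g = V_t f$ and using $V_u V_t f = V_{u+t} f$ this reads $\nu(V_{s+t} f) + \int_0^s e^{\lambda(s-u)}\nu(\Psi_0 V_{u+t} f)\,du = e^{\lambda s}\nu(V_t f)$. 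Hence $\int_0^{s-\epsilon} e^{\lambda(s-u)}\nu(\Psi_0 V_{u+t} f)\, du$ equals $e^{\lambda s}\nu(V_t f) - \nu(V_{s+t} f)$ minus the tail $\int_{s-\epsilon}^s e^{\lambda(s-u)}\nu(\Psi_0 V_{u+t} f)\, du$. I would then factor out $\nu(V_{s+t} f)$: since $e^{\lambda s}\nu(V_t f) \geq \nu(V_{s+t} f)$ (monotonicity of $V$ along the mild equation, or directly $V_{s+t}f = V_s V_t f \le P_s^\beta V_t f$ and $\nu(P_s^\beta V_tf) = e^{\lambda s}\nu(V_tf)$), the ratio
\[
	C^8_{t,\epsilon,s,x,f} := \frac{I_{s,\epsilon}V_t f(x)}{\phi(x)\nu(V_{s+t}f)}
\]
is non-negative, and bounded by a constant times $\big(e^{\lambda s}\nu(V_t f)/\nu(V_{s+t} f) - 1\big)$ plus a uniformly small error from the $H$-terms. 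By Proposition \ref{prop:Vf2} (or \eqref{asp:H2!} directly), $\nu(V_{s+t} f) = \nu(V_s(V_t f))$ and $e^{\lambda s}\nu(V_t f)$ are asymptotically comparable as $t \to \infty$: more precisely, using $V_{s+t}f \le P_s^\beta V_t f$ together with \eqref{asp:H2!} applied to the spatial profile of $V_{s+t}f$, one gets $\nu(V_{s+t}f) = e^{\lambda s}\nu(V_t f)(1 + o(1))$ as $t \to \infty$ uniformly in $f$, which forces the leading ratio to tend to $1$ and the whole $C^8$ term to tend to $0$.

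The main obstacle I anticipate is the integrability and uniform control of $\nu(\Psi_0 V_{u+t} f)$: the operator $\Psi_0$ is quadratic-plus-jump in its argument, so $\Psi_0 V_{u+t} f$ need not a priori be $\nu$-integrable or bounded, and one cannot directly invoke \eqref{asp:H2!}. The remedy is structural: from the mild equation \eqref{eq:Vf1.1} one has $\int_0^s P_{s-u}^\beta \Psi_0 V_u(V_t f)\, du = P_s^\beta V_t f - V_{s+t} f$, and the right-hand side is finite and bounded (by \eqref{lem:PV} and Proposition \ref{prop:Vf1::H1:H2::Y}); so the \emph{time-integrated} quantity is automatically controlled, and on the interval $[0, s-\epsilon]$ the outer kernel $P_{s-u}^\beta$ always has running time $\geq \epsilon > 0$, which is exactly what lets \eqref{asp:H2!} produce the factorization $e^{\lambda(s-u)}\phi(x)\nu(\cdot)(1 + H)$ with a uniform $H$. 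Thus the cutoff at $s - \epsilon$ (as opposed to $s$) is essential and is precisely the reason the lemma is stated for $I_{s,\epsilon}$ rather than the full integral; the singular part near $u = s$ is deferred to the companion estimate on $J_{s,\epsilon}$. The remaining work is then bookkeeping: collecting the uniform bounds on the $H$-terms, the ratio $\nu(V_{s+t}f)/(e^{\lambda s}\nu(V_t f))$, and the tail integral, and checking each tends to its claimed limit uniformly in $x$ and $f$.
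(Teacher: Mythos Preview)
Your overall strategy matches the paper's: apply \eqref{asp:H2!} to factor out $\phi(x)$, use the $\nu$-integrated mild equation \eqref{eq:nuP.1} to rewrite the integral as $e^{\lambda s}\nu(V_tf)-\nu(V_{s+t}f)$, and then control the ratio $e^{\lambda s}\nu(V_tf)/\nu(V_{s+t}f)\to 1$. However, there is a genuine gap in your first step. You apply \eqref{asp:H2!} directly to $P_{s-u}^\beta$ with the running time $s-u$ ranging over $[\epsilon,s]$, and then claim that $\sup_{r\in[\epsilon,s]}\sup_{x,g}|H_{r,x,g}|<\infty$. But \eqref{asp:H2!} only asserts $\sup_{x,g}|H_{t,x,g}|<\infty$ for each \emph{fixed} $t>0$, together with decay as $t\to\infty$; nothing prevents $t\mapsto \sup_{x,g}|H_{t,x,g}|$ from being unbounded on the compact interval $[\epsilon,s]$. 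So the uniform bound you need is not available.

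The paper's fix is a small but essential trick: split $P_{s-u}^\beta = P_\epsilon^\beta \circ P_{s-\epsilon-u}^\beta$ and apply \eqref{asp:H2!} only to the outer $P_\epsilon^\beta$, at the single fixed time $\epsilon$. The inner piece is then absorbed via $\nu(P_{s-\epsilon-u}^\beta\,\cdot\,)=e^{\lambda(s-\epsilon-u)}\nu(\cdot)$. This yields exactly the factor $\phi(x)(1+\sup_g|H_{\epsilon,x,g}|)$, with the $H$-term bounded by a quantity depending only on the fixed $\epsilon$, and the rest of your computation goes through. Two smaller points: the integrability $\nu(\Psi_0 V_{t+u}f)<\infty$ that you flag does need the estimate \eqref{eq:VfO.1} (equivalently the bound $\Psi_0 V_tf\le V_tf\cdot\Psi_0'V_tf$ combined with \eqref{lem:PsV}), which is established in the proof of Lemma~\ref{prop:PVf}; and you should not invoke Proposition~\ref{prop:Vf2} for the ratio $\nu(V_{s+t}f)/(e^{\lambda s}\nu(V_tf))\to 1$, since the present lemma is one of the ingredients in its proof --- use instead the intermediate result \eqref{eq:nVR}, also proved within Lemma~\ref{prop:PVf}.
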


\begin{lem} \label{prop:JVf}
For all $t>T,~0<\epsilon<s< \infty,~x\in E$ and $f\in \mathcal B(E,[0,\infty])$ with $\nu(f)>0$, we have $ J_{s,\epsilon} V_tf(x) = \phi(x) \nu(V_{s+t}f) C^9_{t,\epsilon,s,x,f}$ for some non-negative $C^9_{t,\epsilon,s,x,f}$ with
\[
	\lim_{\epsilon \to 0}\varlimsup_{t+s\to \infty} \sup_{x\in E, f\in \mathcal B(E,[0,\infty])} C^9_{t,\epsilon,s,x,f} =0.
\]
\end{lem}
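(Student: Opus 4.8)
The plan is to estimate $J_{s,\epsilon}V_tf(x) = \int_{s-\epsilon}^s P_{s-u}^\beta \Psi_0 V_u V_t f(x)\,du$ from above, and to show the upper bound has the claimed form $\phi(x)\nu(V_{s+t}f)C^9_{t,\epsilon,s,x,f}$ with the stated limit. The natural first move is to change variables and use the semigroup property $V_uV_tf = V_{u+t}f$, so that the integrand becomes $P_{s-u}^\beta \Psi_0 V_{u+t}f(x)$ with $u+t > T$ throughout the range $u\in(s-\epsilon,s)$ (as soon as $s-\epsilon+t > T$, which holds for $t$ large since $s\geq\epsilon$). Thus every $V_{u+t}f$ appearing lies in $L_1^+(\nu)$ by \eqref{lem:nV::H2::Vf1}, and moreover by Proposition \ref{prop:Vf1::H1:H2::Y} we have $V_{u+t}f(y) = C^3_{u+t,y,f}\phi(y)$ with $\sup_{y,f}C^3_{u+t,y,f}$ small when $u+t$ is large; in particular $V_{u+t}f$ is uniformly small and bounded.

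Next I would control $\Psi_0 V_{u+t}f$ pointwise. Since $\psi_0(x,z) = \sigma(x)^2 z^2 + \int_{(0,\infty)}(e^{-zu}-1+zu)\pi(x,du)$ is non-negative and, using $e^{-zu}-1+zu \leq (zu)^2/2$ together with the boundedness of $\sigma$ and of the kernel $(u\wedge u^2)\pi(x,du)$, one gets an estimate of the form $\Psi_0 g(x) \leq c\,\|g\|_\infty\, g(x)$ for $g\in\mathcal B_b(E,[0,\infty))$ with a constant $c$ independent of $g$ — here the $\|g\|_\infty$ factor comes out because for small $z$ the quadratic term dominates. Applying this with $g = V_{u+t}f$ and using $\|V_{u+t}f\|_\infty \leq \|\phi\|_\infty \sup_{y,f}C^3_{u+t,y,f}$, we obtain $\Psi_0 V_{u+t}f(y) \leq \delta(u+t)\, V_{u+t}f(y)$ where $\delta(r):= c\|\phi\|_\infty \sup_{y,f}C^3_{r,y,f} \to 0$ as $r\to\infty$. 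Then, bounding $\delta(u+t) \leq \delta(s-\epsilon+t) =: \delta_0$ on the range of integration and applying $P_{s-u}^\beta$ (which preserves order) followed by \eqref{asp:H2!} applied to $V_{u+t}f\in L_1^+(\nu)$:
\[
	P_{s-u}^\beta \Psi_0 V_{u+t}f(x)
	\leq \delta_0\, P_{s-u}^\beta V_{u+t}f(x)
	= \delta_0\, e^{\lambda(s-u)}\phi(x)\,\nu(V_{u+t}f)\,(1+H_{s-u,x,V_{u+t}f}).
\]
Since $\nu(V_{u+t}f)$ is non-increasing in $u+t$ — because $\nu(V_r g) = -\log\mathbb P_\nu[e^{-X_r(g)}]$ is non-increasing in $r$ by the Markov property and $e^{\lambda t}$-invariance, or more simply because $V_{s+t}f = V_{s-u}(V_{u+t}f) \leq V_{u+t}f$ in the relevant sense after integrating against $\nu$ — we can bound $\nu(V_{u+t}f) \leq \nu(V_{s-\epsilon+t}f)$; combining with $1 + H_{s-u,x,V_{u+t}f} \leq 1 + \sup_{y,g\in L_1^+(\nu)}|H_{r,y,g}| =: 1 + h_0$ (bounded by \eqref{asp:H2!}) and integrating $\int_{s-\epsilon}^s e^{\lambda(s-u)}du \leq \epsilon$ (as $\lambda<0$), we get $J_{s,\epsilon}V_tf(x) \leq \phi(x)\,\nu(V_{s-\epsilon+t}f)\,\delta_0(1+h_0)\epsilon$. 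Finally, since $\nu(V_{s+t}f) > 0$ whenever $\nu(f)>0$ by \eqref{lem:nVn!}, I can divide and set
\[
	C^9_{t,\epsilon,s,x,f} := \frac{J_{s,\epsilon}V_tf(x)}{\phi(x)\,\nu(V_{s+t}f)}
	\leq \delta_0(1+h_0)\,\epsilon\,\frac{\nu(V_{s-\epsilon+t}f)}{\nu(V_{s+t}f)}.
\]

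The main obstacle is the last ratio $\nu(V_{s-\epsilon+t}f)/\nu(V_{s+t}f)$: it must be shown to stay bounded as $t+s\to\infty$, uniformly in $x$ and $f$, so that the $\delta_0 \to 0$ and $\epsilon\to 0$ factors actually force $C^9$ to zero in the stated iterated-limit sense. This is where I expect to need Proposition \ref{prop:Vf2} (or rather its upstream ingredients): writing $\nu(V_{s-\epsilon+t}f)$ in terms of $\nu(V_{s+t}f)$ using the integral equation \eqref{eq:Vf1.1} over the short time interval $\epsilon$, or using the relation $1 - e^{-\nu\text{-type functional}} = e^{\lambda\cdot}(\cdots)$ analogous to \eqref{eq:G.0} but at finite time, one should get that this ratio is at most $e^{-\lambda\epsilon}(1+o(1))$, hence bounded for $\epsilon$ in a bounded range. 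Alternatively, and perhaps more cleanly, I would feed $V_{s-\epsilon+t}f$ directly into \eqref{asp:H2!}/Proposition \ref{prop:Vf2} to compare $\nu(V_{s-\epsilon+t}f)$ with $\nu(V_{s-\epsilon+t}f)$ evaluated via the mean semigroup applied over time $\epsilon$ to $V_{s+t}f$, exploiting $V_{s+t}f = V_\epsilon V_{s-\epsilon+t}f$ and $\mathbb P_\nu$-linearity of the mean. Once this boundedness is in hand, taking $\varlimsup_{t+s\to\infty}$ kills $\delta_0$ (since $s-\epsilon+t\to\infty$ whenever $t+s\to\infty$ with $\epsilon$ fixed), and then taking $\epsilon\to 0$ kills the remaining $\epsilon$ factor, giving exactly $\lim_{\epsilon\to 0}\varlimsup_{t+s\to\infty}\sup_{x,f}C^9_{t,\epsilon,s,x,f} = 0$ as required.
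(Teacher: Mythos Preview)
Your strategy is broadly the same as the paper's, but there are two gaps, one of which is genuine.

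\textbf{The minor issue.} The pointwise estimate $\Psi_0 g(x)\le c\|g\|_\infty g(x)$ is not correct under the standing assumptions. The bound $e^{-zu}-1+zu\le (zu)^2/2$ would force $\int_{(0,\infty)}u^2\pi(x,du)<\infty$, but only $(u\wedge u^2)\pi(x,du)$ is assumed bounded. The right inequality is $\psi_0(x,z)\le z\,\partial_z\psi_0(x,z)$, which yields $\Psi_0 g(x)\le\big(c_1\|g\|_\infty+c_2\big)g(x)$ with $c_2=\sup_x\int_1^\infty u\,\pi(x,du)$; this is exactly the paper's \eqref{lem:PVtV}. Consequently your $\delta_0$ is only bounded, not tending to~$0$. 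This alone is harmless, because the $\epsilon$ coming from the length of the integration interval would still drive $C^9\to 0$.

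\textbf{The real gap.} You apply \eqref{asp:H2!} to $P_{s-u}^\beta$ with $s-u\in(0,\epsilon)$ and then bound $1+H_{s-u,x,V_{u+t}f}\le 1+h_0$ uniformly. Assumption \eqref{asp:H2!} gives $\sup_{x,g}|H_{r,x,g}|<\infty$ for each \emph{fixed} $r>0$ and decay as $r\to\infty$; it says nothing as $r\downarrow 0$, and in the intrinsically ultracontractive examples this bound typically blows up at short times. So your constant $h_0$ need not be finite, and the whole chain of inequalities leading to $C^9\le \delta_0(1+h_0)\epsilon\cdot\nu(V_{s-\epsilon+t}f)/\nu(V_{s+t}f)$ collapses. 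The paper circumvents this by never invoking \eqref{asp:H2!} at small times: it first uses the already--established bound \eqref{eq:VfO}, $V_{u+t}f(y)\le C^{11}_{u+t,y,f}\,\phi(y)\,\nu(V_{u+t}f)$, to replace $V_{u+t}f$ by a multiple of $\phi$, and then applies the \emph{exact} eigen-relation $P_r^\beta\phi=e^{\lambda r}\phi$, valid for every $r\ge 0$ with no error term. Combined with the ratio control \eqref{eq:nVR} (your final ratio $\nu(V_{s-\epsilon+t}f)/\nu(V_{s+t}f)$ is indeed bounded by this), one obtains $P_u^\beta\Psi_0 V_{t+s-u}f(x)\le \phi(x)\,\nu(V_{t+s}f)\,C^{16}$ with $C^{16}$ uniformly bounded for $u\in(0,\epsilon)$; integrating then gives $C^9\le \epsilon\cdot\sup_{u\in(0,\epsilon)}C^{16}$. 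Replace your use of \eqref{asp:H2!} on $P_{s-u}^\beta$ by this eigenfunction trick and the argument goes through.
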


\begin{proof}[{Proof of Proposition \ref{prop:Vf2}}]
Thanks to \eqref{lem:nVn!} and \eqref{lem:nullVf}, we only need to consider the case that $\nu(f)>0$.
	In this case,
by Lemmas \ref{prop:PVf}, \ref{prop:IVf} and \ref{prop:JVf}.
we have for any $s>0$ and $\epsilon\in (0,s)$,
\begin{align}
	& V_{t+s}f (x)
	\overset{\eqref{eq:OY.0}}= V_s V_t f(x)
	\overset{\eqref{eq:Vf1.1},\eqref{lem:PV}}= P_s^\beta V_t f(x) - \int_0^s P^\beta_{s-u}\Psi_0 V_uV_t f(x) du
	\\&= P_s^\beta V_t f(x) - I_{s,\epsilon} V_tf(x) - J_{s,\epsilon} V_t f(x)
		\\\label{eq:Vf2.1} &=\phi(x)\nu(V_{t+s}f)
		\left( 1+ C^7_{s,t,x,f} - C^8_{t,\epsilon, s, x,f}- C^9_{t,\epsilon,s,x,f}\right).
\end{align}
	On the other hand, we have
\begin{equation}\label{eq:Vf2.2}
	V_{t}f (x)
		= \phi(x) \nu(V_{t}f) (1+ C^{10}_{t,x,f})		\quad \text{for some real $C^{10}_{t,x,f}$}.
\end{equation}
	Combining \eqref{eq:Vf2.1} and \eqref{eq:Vf2.2}, we have for all $s>0$ and $\epsilon \in (0,s)$,
\[
	C^{10}_{t+s,x,f} = C^{7}_{s,t,x,f} - C^{8}_{t,\epsilon, s, x,f}- C^{9}_{t,\epsilon,s,x,f}.
\]
	 Using this and the fact that
\[
	\lim_{\epsilon \to 0}\varlimsup_{s\to \infty}\varlimsup_{t\to \infty}
	\sup_{x\in E, f\in \mathcal B(E,[0,\infty])}
	|C^7_{s,t,x,f} - C^8_{t,\epsilon, s, x,f}- C^9_{t,\epsilon,s,x,f}|=0,
\]
	it is easy to check that $\varlimsup_{s\to \infty}\varlimsup_{t\to \infty} \sup_{x\in E, f\in
	\mathcal B(E,[0,\infty])} |C^{10}_{t+s,x,f}|=0$.
	This implies
\[
	\lim_{t\to \infty}
	\sup_{x\in E, f\in \mathcal B(E,[0,\infty])}
	|C^{10}_{t,x,f}|=0.
	\qedhere
\]
\end{proof}

	Now we prove the three lemmas above.

\begin{proof}[Proof of Lemma \ref{prop:PVf}]
	
	Integrating  both sides of \eqref{eq:Vf1.1} with respect to $\nu$ and replacing $f$ by $V_t f$, we get that for all $t, s\geq 0$ and $f\in \mathcal B(E,[0,\infty])$,
\begin{equation}\label{eq:nuP.1}
	e^{- \lambda (t+s)} \nu(V_{t+s}f) + \int_0^s e^{- \lambda (t+u)} \nu(\Psi_0 V_{t+u}f) du
	= e^{- \lambda t} \nu(V_t f).
\end{equation}
	As a consequence of \eqref{eq:nuP.1}, we can get that for all $t>T$, $s\geq 0$ and $f \in \mathcal B(E,[0,\infty])$ with $\nu(f)>0$,
\begin{equation}\label{Claim:nVI}
	\frac{\nu(V_{t+s} f)} {\nu(V_t f)}
	= \exp\left\{ \lambda s - \int_t^{t+s} \frac{\nu(\Psi_0 V_u f) }{\nu(V_u f)} ~du\right\}.
\end{equation}
	In fact, first observe from \eqref{lem:nV::H2::Vf1} and \eqref{lem:nVn!} that both sides of \eqref{eq:nuP.1} are finite and positive if $t> T$ and $\nu(f)>0$.
	Therefore the function $H: u\mapsto e^{-\lambda u}\nu(V_u f)$ is absolutely continuous on $(T,\infty)$ and
\begin{equation}
	d H(u)
	= - e^{- \lambda u} \nu(\Psi_0 V_u f) du,
	\quad u\in (T,\infty),
\end{equation}
	which implies that
\begin{equation}
	d \log H(u)
	= - \frac{\nu(\Psi_0 V_u f )}{ \nu(V_u f)} du,
	\quad u \in (T,\infty).
\end{equation}
	Now an elementary integration argument gives \eqref{Claim:nVI}.
	
	Define an operator $\Psi_0'$ on $\mathcal B(E,[0,\infty])$ by
\begin{equation}
	\Psi_0' f(x)
	= \lim_{n\to \infty}\frac{\partial \psi_0}{ \partial z} (x, n\wedge f(x)),
	\quad x\in E, f\in \mathcal B(E,[0,\infty]).
\end{equation}
	We first claim that for all $t> T, x\in E$ and $f\in \mathcal B(E,[0,\infty])$,
\begin{equation}\label{lem:PsV}
	\varlimsup_{t\to \infty} \sup_{x\in E, f\in \mathcal B(E, [0,\infty])}\Psi_0'V_tf(x) < \infty.
\end{equation}
	In fact, since
\begin{equation}\label{e:derofpsi0-2}
	\frac{\partial \psi_0 }{ \partial z} (x,z)
	= 2\sigma (x)^2 z + \int_0^\infty (1 - e^{- rz}) r \pi(x,dr),
	\quad x\in E, z\geq 0,
\end{equation}
	we have,
\begin{align}
	&\Psi_0' V_tf(x)
	\leq 2\sigma (x)^2 V_t f(x) + V_t f(x) \int_0^1 r^2 \pi(x,dr) + \int_1^\infty r \pi(x,dr)
		\\&\overset{\text{Proposition \ref{prop:Vf1::H1:H2::Y}}}
	= C^3_{t,x,f}
	\phi(x) \left(2\sigma (x)^2 +\int_0^1 r^2 \pi(x,dr) \right)+ \int_1^\infty r \pi(x,dr),
\end{align}
	Since $\phi$, $\sigma$ are bounded,  and $(r\wedge r^2)\pi(x,du)$ is a bounded kernel, \eqref{lem:PsV} follows easily.
	
	We next claim that for all $t > T$ and $f\in \mathcal B(E,[0,\infty])$,
\begin{equation}\label{eq:nPPV}
	\lim_{t\to \infty}\sup_{f\in \mathcal B(E, [0,\infty])} \nu(\Psi_0' V_t f) = 0.
\end{equation}
	In fact, it follows from \eqref{e:derofpsi0-2} that, for any fixed $x\in E$, $z\mapsto \frac{\partial \psi_0}{\partial z} (x,z)$ is a non-negative, non-decreasing and continuous function on $[0,\infty)$ with $\frac{\partial \psi_0}{\partial z} (\cdot,0) \equiv 0$.
	Therefore for any $x\in E$, we have
\[
	\lim_{t\to \infty} \Psi_0' v_t(x) =\lim_{t\to \infty} \frac{\partial \psi_0}{ \partial z}(x,v_t(x)) \overset{\text{Proposition \ref{prop:Vf1::H1:H2::Y}}}{=} 0.
\]
	Using this, \eqref{lem:PsV} and the bounded convergence theorem, we easily get $\lim_{t\to \infty}\nu(\Psi_0' v_t)  = 0. $ The claim follows immediately from the monotonicity of $\Psi_0' V_t f$ in $f\in \mathcal B(E,[0,\infty])$.

	Here is another claim that will be used below:
\begin{equation}
\begin{minipage}{0.9\textwidth}
	For all $t> T, x\in E$ and $f\in \mathcal B(E,[0,\infty])$, it holds that \[V_t f(x) = \phi(x) \nu(V_tf) C^{11}_{t,x,f}\] for some non-negative $C^{11}_{t,x,f}$ with $\varlimsup_{t\to \infty} \sup_{x\in E, f\in \mathcal B(E,[0,\infty])} C^{11}_{t,x,f} <\infty$.
\end{minipage} \label{eq:VfO}
\end{equation}
	To see this, first note that \eqref{eq:VfO} is trivial when $\nu(f) = 0$ thanks to \eqref{lem:nVn!} and \eqref{lem:nullVf}.
	Therefore, we only need to consider the case that $\nu(f)>0$.
	In this case, it follows from the elementary fact
\begin{equation}\label{e:derofpsi0}
	\psi_0(x,z)
	\leq z \frac{\partial \psi_0}{\partial z}(x,z),
	\quad x\in E, z\geq 0,
\end{equation}
	that
\begin{align}
	&\nu(\Psi_0 V_tf)
    \leq \nu((V_tf)\cdot (\Psi_0' V_tf)) \leq \nu(V_tf) \sup_{y\in E} \Psi_0' V_tf(y).
	\end{align}
	From \eqref{lem:nV::H2::Vf1}  we get that $\nu(V_tf) <\infty$.
	Thus from \eqref{lem:PsV} for $t> T$ and $f\in \mathcal B(E,[0,\infty])$,
\begin{equation}
	\nu(\Psi_0 V_t f)
	= \nu(V_tf) C^{12}_{t,f} \label{eq:VfO.1}
\end{equation}
	for some non-negative $C^{12}_{t,f} $ with $\varlimsup_{t\to \infty} \sup_{f\in \mathcal B(E,[0,\infty])}C^{12}_{t,f}  < \infty$.
	Therefore, for any $s\geq 0$,
\begin{align}
	&  \frac{\nu(V_{t+s} f)} {\nu(V_t f)} \overset{\eqref{Claim:nVI}}= \exp\left\{ \lambda s - \int_t^{t+s} \frac{\nu(\Psi_0 V_u f) }{\nu(V_u f)} du\right\}
	\\&\label{eq:VfO.2} \overset{\text{\eqref{eq:VfO.1}}}= \exp\left\{ \lambda s -
	\int_t^{t+s} C^{12}_{u,f} ~du\right\}.
\end{align}
Now note that for any $\epsilon\in (0, t- T)$,
\begin{align}
	&V_{t}f(x) \overset{\eqref{Fact:BV!}}= V_{\epsilon} V_{t-\epsilon} f\leq P_\epsilon^\beta V_{t-\epsilon} f(x)\quad\text{by \eqref{eq:Vf1.1}},
	\\& \overset{\text{\eqref{asp:H2!}}}= \phi(x) \nu(V_{t-\epsilon}f) e^{\lambda  \epsilon} (1+H_{\epsilon,x, V_{t-\epsilon} f} )
		\\& \label{eq:VfO.3}\overset{\text{\eqref{eq:VfO.2}}}= \phi(x)\nu(V_{t}f)\exp\left\{ \int_{t-\epsilon}^t
	C^{12}_{u,f} ~du\right\} (1+H_{\epsilon,x, V_{t-\epsilon} f} ).
	\end{align}
	According to \eqref{lem:nV::H2::Vf1}  and \eqref{asp:H2!} we have
\begin{equation}	
	\varlimsup_{t\to \infty}\sup_{x\in E, f\in \mathcal B(E,[0,\infty])}
	|H_{\epsilon,x, V_{t-\epsilon} f}| < \infty, \quad \epsilon > 0.
\end{equation}
	From this, $\eqref{eq:VfO.3}$ and the fact that
	$\varlimsup_{u\to \infty} \sup_{f\in \mathcal B(E,[0,\infty])} C^{12}_{u,f}  < \infty$,
 \eqref{eq:VfO} follows immediately.
	
	We now use \eqref{lem:PsV}, \eqref{eq:nPPV} and \eqref{eq:VfO} to give the asymptotic ratio of $\nu(\Psi_0V_tf)$ and $\nu(V_tf)$.
	Note that we already obtained some result for this ratio in \eqref{eq:VfO.1}.
	We claim that the following stronger assertion is valid:
\begin{equation}\label{eq:nP}
	\lim_{t\to \infty}\sup_{f\in \mathcal B(E,[0,\infty])} C^{12}_{t,f} = 0,
	\quad f\in \mathcal B(E,[0,\infty]).
\end{equation}
	To see this, we observe that
\begin{align}
	&\nu(\Psi_0 V_tf)
	\leq \nu((V_tf)\cdot (\Psi_0' V_tf)) ,\quad\text{by \eqref{e:derofpsi0}},
	\\&  \leq  \nu(\Psi_0' V_tf) \sup_{x\in E}V_tf(x)
	\overset{\eqref{eq:VfO}}=   \nu(\Psi_0' V_tf)  \cdot \nu(V_tf) \sup_{x\in E}
	(\phi(x) C^{11}_{t,x,f}).
\end{align}
	Since $\phi$ is bounded, \eqref{eq:nP} follows from \eqref{eq:nPPV} and \eqref{eq:VfO}.

	Using \eqref{eq:nP}, we can get the following asymptotic ratio of $\nu(V_{t+s}f)$ and $\nu(V_tf)$:
\begin{equation} \label{eq:nVR}
\begin{minipage}{0.9\textwidth}
	For all $t> T,~s \geq 0$ and $f\in \mathcal B(E,[0,\infty])$, we have
\[
	\nu(V_{t+s}f) = \nu(V_tf) \exp\{\lambda s (1+C^{13}_{t,s,f}) \}
\]
	for some real $C^{13}_{t,s,f}$ with $\lim_{t\to \infty} \sup_{s\geq  0,f\in \mathcal B(E,[0,\infty])} |C^{13}_{t,s,f}| = 0$.
	In particular, for all $f\in \mathcal B(E,[0,\infty])$ with $\nu(f)>0$ and $s\geq 0$, we have $\lim_{t\to \infty} \frac{\nu(V_{t+s}f)}{\nu(V_tf)} = e^{\lambda s}$.
\end{minipage}
\end{equation}
	To see this, thanks to \eqref{lem:nVn!},
	we only need to consider the case $\nu(f)>0$. In this case,  it holds  that
\begin{align}
	&\frac{\nu(V_{t+s} f)} {\nu(V_t f)}
	\overset{\eqref{eq:VfO.2}}= \exp\left\{\lambda s- \int_t^{t+s} C^{12}_{u,f} ~du\right\}
		=: \exp\{\lambda s (1+C^{13}_{t,s,f}) \}.
	\end{align}
Noticing that $C^{13}_{t,s,f} = -\frac{1}{\lambda s}\int_t^{t+s} C^{12}_{u,f} ~du$ and by \eqref{eq:nP} that $\lim_{u\to \infty}\sup_{f\in \mathcal B(E,[0,\infty])} C^{12}_{u,f} = 0$,
so we have $\lim_{t\to \infty} \sup_{s> 0,f\in \mathcal B(E,[0,\infty])} |C^{13}_{t,s,f}| = 0$.	
	
	We are now  ready to prove the conclusion of Lemma \ref{prop:PVf}.
	Again we only need to consider the case $\nu(f)>0$ thanks to \eqref{lem:nVn!} and  \eqref{lem:nullVf}.
	In this case, by \eqref{lem:nV::H2::Vf1} and \eqref{lem:nVn!}, we have $0<\nu(V_{t}f)<\infty$.
	Therefore, we have
\begin{align}
	& P_s^\beta V_t f(x)
	\overset{\text{\eqref{asp:H2!}}}= e^{\lambda s} \phi(x) \nu(V_tf) (1+H_{s,x,V_tf})
	\\&\overset{\eqref{eq:nVR}}= \phi(x) \nu(V_{t+s}f)
	\exp\{-\lambda s C^{13}_{t,s,f}\} (1+H_{s,x,V_tf}).
	\end{align}
	From \eqref{asp:H2!} and \eqref{lem:nV::H2::Vf1}, we know that $\lim_{s\to \infty} \sup_{x\in E, t> T, f\in \mathcal B(E,[0,\infty])}|H_{s,x,V_tf}| = 0$.
From \eqref{eq:nVR}, we know that $\sup_{s\geq 0} \lim_{t\to \infty}
\sup_{f\in \mathcal B(E,[0,\infty])} |sC^{13}_{t,s,f}| = 0$.
	Therefore, we have
 \[
\lim_{s\to \infty}\varlimsup_{t\to \infty}\sup_{x\in E, f\in \mathcal B(E,[0,\infty])}
    |\exp\{-\lambda s C^{13}_{t,s,f}\} (1+H_{s,x,V_tf})-1| = 0.
 \]
Combining the displays above we get the conclusion of Lemma \ref{prop:PVf}.
\end{proof}

\begin{proof}[Proof of Lemma \ref{prop:IVf}]
	For all $u\geq 0$, we have
\begin{align} \label{eq:IVf.25}
	&\nu(P_u^\beta \Psi_0 V_t f) = e^{\lambda u}\nu(\Psi_0 V_t f)<\infty,
\end{align}
where the inequality follows from \eqref{eq:VfO.1} and \eqref{lem:nV::H2::Vf1}
	Therefore, we have
\begin{align}
 	& I_{s,\epsilon} V_t f(x)
 	= \int_0^{s- \epsilon} P_{s-u}^\beta \Psi_0 V_{t+u} f (x) du
 	= \int_0^{s- \epsilon} P_\epsilon^\beta (P_{s - \epsilon - u}^\beta \Psi_0 V_{t+u} f )(x) du
 	\\&\overset{\eqref{asp:H2!}}
 	= \int_0^{s - \epsilon} e^{\lambda \epsilon} \phi(x) \nu(P_{s - \epsilon - u}^{\beta} \Psi_0 V_{t+u} f)  \left(1+H_{\epsilon ,x , P_{s - \epsilon - u}^{\beta} \Psi_0 V_{t+u} f}\right) du
	\\&\overset{\text{\eqref{eq:IVf.25}}}= e^{(t+s)\lambda} \int_0^{s - \epsilon} \phi(x) e^{-\lambda (t+u)}\nu(\Psi_0 V_{t+u} f)  \Big(1+H_{\epsilon ,x , P_{s - \epsilon - u}^{\beta} \Psi_0 V_{t+u} f}\Big) du
	\\&\leq \phi(x) \Big(1+\sup_{g\in L_1^+(\nu)}|H_{\epsilon ,x , g}|\Big) e^{(t+s)\lambda} \int_0^{s} e^{-\lambda (t+u)} \nu(\Psi_0 V_{t+u}f)du\quad
	\text{by \eqref{eq:IVf.25}}
 	\\&\overset{\eqref{eq:nuP.1}}= \phi(x) \left(1+\sup_{g\in L_1^+(\nu)}|H_{\epsilon ,x , g}|\right)  e^{(t+s)\lambda} \Big(e^{-\lambda t}\nu(V_tf)- e^{-\lambda(t+s)}\nu(V_{t+s}f)\Big)
 	\\&\overset{\eqref{lem:nV::H2::Vf1},\eqref{lem:nVn!}}= \phi(x) \Big(1+\sup_{g\in L_1^+(\nu)}|H_{\epsilon ,x , g}|\Big) \nu(V_{t+s}f) \Big( \frac{e^{s \lambda }\nu(V_tf)}{\nu(V_{t+s}f)} - 1\Big)
  \\&\overset{\eqref{eq:nVR}}= \phi(x) \Big(1+\sup_{g\in L_1^+(\nu)}|H_{\epsilon ,x , g}|\Big)
  \nu(V_{t+s}f) ( \exp\{- \lambda s C^{13}_{t,s,f}\} - 1).
\end{align}
	It is easy to check that
\begin{equation}
	\lim_{t\to \infty}\sup_{x\in E, f\in \mathcal B(E,[0,\infty])}
	\Big|\Big(1+\sup_{g\in L_1^+(\nu)}|H_{\epsilon ,x , g}|\Big)
	( \exp\{- \lambda s C^{13}_{t,s,f}\} - 1) \Big| = 0.
\end{equation}
	The desired result then follows.
\end{proof}

\begin{proof}[Proof of Lemma \ref{prop:JVf}]
	It follows from \eqref{e:derofpsi0} that for all $t> T, x\in E$ and $f\in \mathcal B(E,[0,\infty])$,
\begin{equation}
	\Psi_0 V_t f (x)
	\leq V_tf(x)\cdot \Psi_0' V_t f(x)
	\end{equation}
		Now by \eqref{lem:PsV} we have
	\begin{equation} \label{lem:PVtV}
		\Psi_0 V_t f(x) = V_tf(x) C^{14}_{t,x,f}
\end{equation}
for some non-negative $C^{14}_{t,x,f}$ with $\varlimsup_{t\to \infty} \sup_{x\in E, f\in \mathcal B(E,[0,\infty])} C^{14}_{t,x,f} < \infty$.

Recall the quantity $C^{13}_{t,s,f}$ given in  \eqref{eq:nVR}.
	Now we claim that for all $u\geq 0$, $t>T$, $x\in E$ and $f\in \mathcal B(E,[0,\infty])$,
\begin{equation} \label{Claim:PuPVt}
        P_u^\beta \Psi_0 V_{t} f(x) = \phi(x)\nu(V_{t+u}f)
       \exp\{-\lambda u C^{13}_{t,u,f} \} C^{15}_{t,u,x,f}
\end{equation}
for some non-negative $C^{15}_{t,u,x,f}$ with $\varlimsup_{t\to \infty} \sup_{u\geq 0, x\in E, f\in \mathcal B(E,[0,\infty])} C^{15}_{t,u,x,f} < \infty$.
\begin{align}
	&P_u^\beta \Psi_0 V_{t} f(x)
	= \int_{E} \Psi_0V_tf(y) P_u^\beta (x,dy)
	\overset{\eqref{lem:PVtV}}=\int_{E} V_tf(y)
	C^{14}_{t,y,f} P_u^\beta (x,dy)
        \\&\overset{\eqref{eq:VfO}}=\int_{E} \phi(y)\nu(V_tf)
        C^{11}_{t,y,f}C^{14}_{t,y,f} P_u^\beta (x,dy)
\\&\overset{\eqref{eq:nVR}}=\int_{E} \phi(y)\nu(V_{t+u}f) \exp\{-\lambda u
(1+C^{13}_{t,u,f}) \} C^{11}_{t,y,f}C^{14}_{t,y,f} P_u^\beta (x,dy)
\\& \leq \nu(V_{t+u}f)
\exp\{-\lambda u (1+C^{13}_{t,u,f}) \} \Big(\sup_{z\in E} C^{11}_{t,z,f}C^{14}_{t,z,f}\Big)
\int_{E} \phi(y) P_u^\beta (x,dy)
\\& = \nu(V_{t+u}f)
\exp\{-\lambda u (1+C^{13}_{t,u,f}) \} \Big(\sup_{z\in E} C^{11}_{t,z,f}C^{14}_{t,z,f}\Big)
e^{\lambda u}\phi(x).
\end{align}
 Now \eqref{Claim:PuPVt} follows from the fact that $\varlimsup_{t\to \infty}
 \Big(\sup_{z\in E, f\in \mathcal B(E,[0,\infty])} C^{11}_{t,z,f} C^{14}_{t,z,f}\Big) < \infty$.

	Note that \eqref{Claim:PuPVt} gives the asymptotic behavior of $P_u^\beta \Psi_0 V_t f(x)$.
	We want to reformulate it into the asymptotic behavior  of $P_u^\beta \Psi_0 V_{t-u} f(x)$.	
	To do this, we use the following elementary facts: for any real function $h$ on $[0,\infty)^2$,
\begin{align}\label{Fact:TO!}
	\varlimsup_{t\to \infty} \sup_{u\geq 0} |h(t,u)| < \infty & \implies \sup_{\epsilon > 0} \varlimsup_{t\to \infty} \sup_{u \in (0,\epsilon)} |h(t-u,u)| < \infty;
	\\ 	\lim_{t\to \infty} \sup_{u \geq 0} |h(t,u)| = 0 & \implies \sup_{\epsilon > 0} \lim_{t\to \infty} \sup_{u \in (0,\epsilon)} u\cdot |h(t-u,u)| = 0.
\end{align}
	Observe that for all $u>0$, $t> T + u$ and $f \in \mathcal B(E,[0,\infty])$,
\begin{align}
	& P_u^\beta \Psi_0 V_{t-u} f(x)
	\overset{\eqref{Claim:PuPVt}}= \phi(x) \nu(V_{t}f)
	\exp\{-\lambda u C^{13}_{t-u,u,f} \} C^{15}_{t-u,u,x,f}.
\end{align}
	From \eqref{Fact:TO!}, we know that
\[
	\sup_{\epsilon > 0}\varlimsup_{t\to \infty}
	\sup_{u\in (0,\epsilon), x\in E, f \in \mathcal B(E,[0,\infty])}
	C^{15}_{t-u,u,x,f} < \infty
\]
	and that
\[
	\sup_{\epsilon > 0}\lim_{t\to \infty}
	\sup_{u\in (0,\epsilon), f \in \mathcal B(E,[0,\infty])}
       uC^{13}_{t-u,u,f} =0.
\]
	Thus,
\begin{equation}
	\label{Claim:PPV}
	P_u^\beta \Psi_0 V_{t-u} f(x)
	= \phi(x)\nu(V_tf) C^{16}_{t,u,f,x}
	\end{equation}
for some non-negative $C^{16}_{t, u,f,x}$ with $\sup_{\epsilon > 0} \varlimsup_{t\to \infty}
\sup_{u \in (0,\epsilon), x\in E, f \in \mathcal B(E,[0,\infty])} C^{16}_{t,u,f,x} < \infty$.

	Finally, we note that
\begin{align}
	&J_{s,\epsilon}V_tf(x) = \int_{s-\epsilon}^s P_{s-u}^\beta \Psi_0 V_{t+u} f(x)du
	= \int_0^\epsilon P_u^\beta \Psi_0 V_{t+s - u}f(x) du
	\\& \overset{\eqref{Claim:PPV}}= \int_0^\epsilon \phi(x) \nu(V_{t+s}f) C^{16}_{t+s,u,f,x}~du
	\leq \epsilon \phi(x)\nu(V_{t+s}f) \sup_{u\in (0,\epsilon)} C^{16}_{t+s,u,f,x}.
	\end{align}
	It is elementary to see that \[\lim_{\epsilon \to 0}\varlimsup_{t+s \to \infty}
	\sup_{x\in E, f \in \mathcal B(E,[0,\infty])}
		\Big(\epsilon \sup_{u\in (0,\epsilon)} C^{16}_{t+s,u,f,x}\Big) = 0.\]
		Combining the two displays above, we get the conclusion of Lemma \ref{prop:JVf}.
		\end{proof}

\subsection{Proof of Proposition \ref{prop:G}}\label{sec:G}
	Recall that for each $t\geq 0$, $\Gamma_t :=
		\mathscr L_{X_t;\mathbb P_\nu(\cdot | \|X_t\|>0)}$,
	the log-Laplace functional for $X_t$ under probability
	$\mathbb P_\nu(\cdot | \|X_t\|>0)$.
	For any unbounded increasing positive sequence $\mathbf t = (t_n)_{n\in \mathbb N}$, define $G^\mathbf t f = \varliminf_{n\to \infty} \Gamma_{(t_n)} f$.

	To prove Proposition \ref{prop:G}, we first prove two lemmas.

\begin{lem} \label{prop:Gtb:H1:H2:H3:H4}
	For any unbounded increasing positive sequence $\mathbf t = (t_n)_{n\in \mathbb N}$,
	$G^\mathbf t$ is a $[0,\infty]$-valued monotone concave functional on $\mathcal B(E,[0,\infty])$
	such that $G^{\mathbf t}(\infty \mathbf 1_E) = \infty$ and that
\begin{equation}
	1 - e^{-G^\mathbf t V_s f}
	= e^{s\lambda} (1-e^{- G^\mathbf t f}),
	\quad s\geq 0, f\in \mathcal B(E,[0,\infty]).
\end{equation}
\end{lem}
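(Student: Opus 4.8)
The plan is to extract the functional identity from the exact probabilistic identity at finite time and pass to the limit along the subsequence $\mathbf t$. First I would record the finite-time relation: for $s \geq 0$, $t > T$ (so that Proposition~\ref{prop:Vf1::H1:H2::Y} and $\eqref{lem:nV::H2::Vf1}$ apply) and $f \in \mathcal B(E,[0,\infty])$, the branching property $\eqref{eq:OY.0}$ gives $V_{t+s}f = V_t(V_sf)$, hence by $\eqref{eq:BGD.2}$ and $\eqref{eq:OY.1}$,
\[
	1 - e^{-\Gamma_{t+s}(V_sf)}
	= \frac{1 - e^{-\nu(V_{t+s}f)}}{1 - e^{-\nu(v_{t+s})}},
	\qquad
	1 - e^{-\Gamma_{t}(f)}
	= \frac{1 - e^{-\nu(V_{t}f)}}{1 - e^{-\nu(v_{t})}}.
\]
Wait---more precisely, since $\Gamma_t g = -\log \mathbb P_\nu[e^{-X_t(g)}\mid \|X_t\|>0] = -\log\frac{e^{-\nu(V_tg)} - e^{-\nu(v_t)}}{1 - e^{-\nu(v_t)}}$, I would compute $1 - e^{-\Gamma_{t+s}(V_sf)}$ and $1 - e^{-\Gamma_t f}$ in terms of $\nu(V_{t+s}f)$, $\nu(V_t f)$, $\nu(v_{t+s})$, $\nu(v_t)$, and then take the ratio. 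Using Proposition~\ref{prop:Vf1::H1:H2::Y} (so $\nu(V_tf)\to 0$, $\nu(v_t)\to 0$) together with $(1-e^{-x})/x \to 1$ as $x\to 0$, the ratio $\frac{1-e^{-\Gamma_{t+s}V_sf}}{1-e^{-\Gamma_t f}}$ is asymptotic to $\frac{\nu(V_{t+s}f)/\nu(v_{t+s})}{\nu(V_tf)/\nu(v_t)}$; and by the asymptotic ratio in $\eqref{eq:nVR}$ applied both to $f$ and to $\infty\mathbf 1_E$, the $e^{\lambda s}$ factors cancel appropriately so that this converges to $e^{s\lambda}$ along any $t \to \infty$ (one must handle separately the case $\nu(f)=0$, where both $\Gamma_t f = 0$ and $\Gamma_{t+s}V_sf = 0$ by $\eqref{lem:nullVf}$, so the claimed identity reads $0=0$).

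Next I would pass to the limit along $\mathbf t = (t_n)$. Taking $t = t_n$ and $t_n \to \infty$ in the asymptotic relation just derived, and using that $1-e^{-\cdot}$ is continuous and bounded on $[0,\infty]$ while $x \mapsto 1-e^{-x}$ converts $\varliminf$ of $\Gamma_{t_n}$ into $\varliminf$ of $1 - e^{-\Gamma_{t_n}}$ (monotonicity), I get
$1 - e^{-G^{\mathbf t}V_sf} = e^{s\lambda}\bigl(1 - e^{-G^{\mathbf t}f}\bigr)$, which is the displayed identity. For this step I must be careful that $\varliminf_n \Gamma_{t_n}(V_sf)$ is indeed what appears: since $V_sf$ is a fixed element of $\mathcal B(E,[0,\infty])$, $G^{\mathbf t}V_sf = \varliminf_n \Gamma_{t_n}(V_sf)$ by definition, and $x\mapsto 1-e^{-x}$ is increasing and continuous on $[0,\infty]$, so it commutes with $\varliminf$. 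The monotonicity of $G^{\mathbf t}$ is immediate from the monotonicity of each $\Gamma_t$ (which follows from $\mathbf 1_{f\le g}$ and the expression for $\Gamma_t$), and the concavity of $u\mapsto G^{\mathbf t}(uf)$ on $[0,1]$ (for $f$ with $G^{\mathbf t}f < \infty$) follows from the concavity of $u \mapsto \Gamma_t(uf)$ --- each $\Gamma_t$ is a log-Laplace functional, hence $u\mapsto \Gamma_t(uf)$ is concave, being the negative log of the Laplace transform of a positive random variable --- together with the fact that a $\varliminf$ of concave functions that is finite somewhere is concave. Finally $G^{\mathbf t}(\infty\mathbf 1_E) = \infty$: by $\eqref{eq:OY.1}$, $\Gamma_t(\infty\mathbf 1_E) = -\log\frac{\mathbb P_\nu(\|X_t\|=0) - \mathbb P_\nu(\|X_t\|=0)}{\ldots}$; more carefully, $\mathbb P_\nu[e^{-X_t(\infty\mathbf 1_E)}\mid\|X_t\|>0] = \mathbb P_\nu(X_t = \mathbf 0\mid \|X_t\|>0) = 0$, so $\Gamma_t(\infty\mathbf 1_E) = +\infty$ for every $t$, hence $G^{\mathbf t}(\infty\mathbf 1_E) = \infty$.

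The main obstacle I anticipate is the first step: carefully justifying the passage from the exact finite-$t$ ratio of $1-e^{-\Gamma}$ quantities to its limit $e^{s\lambda}$, i.e. controlling all the error terms uniformly enough. This requires combining Proposition~\ref{prop:Vf1::H1:H2::Y} (to know $\nu(V_tf), \nu(v_t) \to 0$ so the linearization $1-e^{-x}\sim x$ is legitimate) with the sharp ratio asymptotics $\eqref{eq:nVR}$ --- and one subtlety is that $\eqref{eq:nVR}$ gives $\nu(V_{t+s}f)/\nu(V_tf)\to e^{\lambda s}$ only when $\nu(f) > 0$, so the degenerate case $\nu(f)=0$ must be dispatched by hand via $\eqref{lem:nullVf}$ before invoking it. Everything else --- monotonicity, concavity, and the value at $\infty\mathbf 1_E$ --- is a routine transfer of the corresponding properties of the finite-time log-Laplace functionals $\Gamma_t$ through the $\varliminf$.
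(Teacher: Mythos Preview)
Your overall strategy matches the paper's: establish a finite-time multiplicative relation linking $1-e^{-\Gamma_\cdot(V_sf)}$ to $1-e^{-\Gamma_\cdot f}$, use Proposition~\ref{prop:Vf1::H1:H2::Y} and \eqref{eq:nVR} to identify the limiting factor as $e^{\lambda s}$, dispatch $\nu(f)=0$ separately via \eqref{lem:nVn!}, and read off monotonicity, concavity, and the value at $\infty\mathbf 1_E$ from the corresponding properties of each $\Gamma_t$ (Lemma~\ref{Fact:CP!}).

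There is, however, an indexing error that breaks the computation as written. You work with $\Gamma_{t+s}(V_sf)$, but what you need is $\Gamma_t(V_sf)$: by definition $G^{\mathbf t}(V_sf) = \varliminf_n \Gamma_{t_n}(V_sf)$, not $\varliminf_n \Gamma_{t_n+s}(V_sf)$, and your caveat about ``$\varliminf_n \Gamma_{t_n}(V_sf)$ being what appears'' does not address this, since your derivation produces the shifted index. Worse, your displayed formula $1-e^{-\Gamma_{t+s}(V_sf)} = \frac{1-e^{-\nu(V_{t+s}f)}}{1-e^{-\nu(v_{t+s})}}$ is itself incorrect (the numerator should carry $\nu(V_{t+2s}f)$ by the semigroup property), and with the formulas as you wrote them the ratio $\frac{\nu(V_{t+s}f)/\nu(v_{t+s})}{\nu(V_tf)/\nu(v_t)}$ tends to $1$, not $e^{\lambda s}$, since both $\nu(V_{t+s}f)/\nu(V_tf)$ and $\nu(v_{t+s})/\nu(v_t)$ tend to $e^{\lambda s}$ and these cancel. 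The paper instead uses directly
\[
1 - e^{-\Gamma_t(V_sf)} \;=\; \frac{1-e^{-\nu(V_{t+s}f)}}{1-e^{-\nu(v_t)}}
\;=\; \frac{1-e^{-\nu(V_{t+s}f)}}{1-e^{-\nu(V_tf)}}\,\bigl(1-e^{-\Gamma_t f}\bigr),
\]
where the $v_t$ factors cancel exactly; the prefactor has a genuine limit $e^{\lambda s}$ by \eqref{eq:nVR} and Proposition~\ref{prop:Vf1::H1:H2::Y}, so the $\varliminf$ along $(t_n)$ passes to both sides without any shifted-subsequence issue. Replace $\Gamma_{t+s}(V_sf)$ by $\Gamma_t(V_sf)$ throughout and your argument becomes the paper's.
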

\begin{proof}
	Since $(\Gamma_t)_{t\geq 0}$ are $[0,\infty]$-valued functionals, so is $G^{\mathbf t}$.
	Also, from $\Gamma_t(\infty \mathbf 1_E) = \infty$ for all $t\geq 0$ we have that $G^{\mathbf t}(\infty \mathbf 1_E) = \infty$.
	We claim that $G^\mathbf t$ is monotone concave.
	In fact, for each $f \leq g$ in $\mathcal B(E,[0,\infty])$, we have
\begin{equation}
	G^{\mathbf t} f
	= \varliminf_{n\to \infty} \Gamma_{(t_n)} f
	\leq \varliminf_{n\to \infty} \Gamma_{(t_n)} g
	= G^{\mathbf t} g.
\end{equation}
	On the other hand, using Lemma \ref{Fact:CP!}, we have for all $t\geq 0$, $f\in \mathcal B(E,[0,\infty])$, $u,v \in [0,\infty)$, $r\in [0,1]$, it holds that
\begin{equation}
	\Gamma_t((ru+(1-r) v)f)
	\geq r \Gamma_t (uf) + (1-r) \Gamma_t (vf).
\end{equation}
	Therefore, for all $f\in \mathcal B(E,[0,\infty])$, $u,v \in [0,\infty)$, $r \in [0,1]$, we have
\begin{align}
	& G^{\mathbf t}((ru + (1-r)v)f)
	= \varliminf_{n \to \infty} \Gamma_{(t_n)}((ru + (1-r)v)f)
	\\&\geq \varliminf_{n\to \infty} (r\Gamma_{(t_n)} (uf) + (1-r)\Gamma_{(t_n)}(vf))
	\\&\geq r (\varliminf_{n\to \infty} \Gamma_{(t_n)} (uf)) + (1-r) (\varliminf_{n\to \infty} \Gamma_{(t_n)}(vf) )
	\\&= r G^{\mathbf t} (uf) + (1-r) G^{\mathbf t}(vf).
\end{align}
	
	Note that for any $t>0$ and $f\in \mathcal B(E,[0,\infty])$, it holds that
\begin{equation}\label{lem:Gfnv!}
	1 - e^{- \Gamma_t f}
	= \frac{ \mathbb P_\nu [ 1 - e^{- X_t(f)}]}{ \mathbb P_\nu (\|X_t\| > 0)}
	= \frac{ 1 - e^{- \nu(V_tf)} }{ 1 - e^{- \nu(v_t)}}.
\end{equation}
	Fix a function $f\in \mathcal B(E,[0,\infty])$.
	Thanks to \eqref{lem:nVn!} and \eqref{lem:Gfnv!}, we only need to consider the case $\nu(f) > 0$.
	In this case, by \eqref{lem:nVn!}, we have $\nu(V_tf)>0$ for each $t\geq 0$.
	Therefore, for any $s,t\geq 0$,
\begin{align}
	& 1 - e^{- \Gamma_t V_s f}
	\overset{\eqref{lem:Gfnv!}}= \frac{ 1 - e^{- \nu(V_{t+s} f)} }{ 1 - e^{- \nu(v_t)}}
	= \frac{ 1 - e^{- \nu(V_{t+s} f)} }{ 1 - e^{- \nu(V_tf)}} \frac{ 1 - e^{ - \nu(V_tf)}}{ 1 - e^{- \nu(v_t)}}
	\\ &  \label{eq:Gtb.5}\overset{\eqref{lem:Gfnv!}}= \frac{ 1 - e^{- \nu(V_{t+s} f)} }{ 1 - e^{- \nu(V_tf)}} ( 1 - e^{- \Gamma_t f}).
\end{align}
	Thus, for any $s\geq 0$,
\begin{align}
	& 1 - e^{- G^{\mathbf t} V_s f}
	= \varliminf_{n\to \infty} ( 1 - e^{- \Gamma_{(t_n)} V_s f})
	\overset{\text{\eqref{eq:Gtb.5}}}= \varliminf_{n\to \infty} \left( \frac{ 1 - e^{- \nu(V_{t_n+s}f)}}{ 1 - e^{- \nu(V_{(t_n)}f)}} (1 - e^{- \Gamma_{(t_n)} f}) \right)
	\\& = \left( \lim_{t \to \infty} \frac{ 1 - e^{- \nu(V_{t+s}f)}}{ 1 - e^{- \nu(V_{t}f)}} \right) \cdot \varliminf_{n\to \infty} (1 - e^{- \Gamma_{(t_n)} f} )
	= e^{s\lambda} (1 - e^{- G^{\mathbf t}f}),
\end{align}
	where the last equality follows from Proposition \ref{prop:Vf1::H1:H2::Y}, \eqref{eq:nVR}, and the fact that
	$
	(1-e^{-x}) /x \xrightarrow[x\to 0]{} 1.
	$
\end{proof}

\begin{lem} \label{prop:G*:H1:H2:H3:H4}
	Suppose that $r \in [\lambda,0)$.
	If $G_r$ is a $[0,\infty]$-valued monotone concave functional on $\mathcal B(E,[0,\infty])$
	such that $G_r(\infty \mathbf 1_E) = \infty$ and that
\begin{equation}
	1 - e^{-G_r V_s f}
	= e^{s r} (1 - e^{- G_r f}),
	\quad s\geq 0, f\in \mathcal B(E,[0,\infty]),
\end{equation}
	then for any unbounded increasing positive sequence $\mathbf t = (t_n)_{n\in \mathbb N}$,
\begin{equation}
	1 - e^{-G_r f} = (1 - e^{- G^\mathbf t f})^{r/\lambda}, \quad f \in \mathcal B(E,[0,\infty]).
\end{equation}
\end{lem}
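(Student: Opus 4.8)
The plan is to pass to the bounded functionals $h:=1-e^{-G^{\mathbf t}}$ and $h_r:=1-e^{-G_r}$ on $\mathcal B(E,[0,\infty])$. By Lemma~\ref{prop:Gtb:H1:H2:H3:H4} and the hypotheses, both are monotone, equal $1$ at $\infty\mathbf 1_E$, and satisfy $h(V_sg)=e^{s\lambda}h(g)$ and $h_r(V_sg)=e^{sr}h_r(g)$ for all $s\ge 0$, so the goal becomes $h_r=h^{r/\lambda}$. I would first dispose of the case $\nu(f)=0$: then $\nu(V_tf)=0$ for all $t$ by \eqref{lem:nVn!}, hence $h(f)=0$ by \eqref{lem:Gfnv!}; also $V_sf=0$ for $s>T$ by \eqref{lem:nullVf} and $h_r(0)=0$ (from the functional equation at $f\equiv0$, where $V_s0=0$), so $h_r(f)=e^{-sr}h_r(V_sf)=0$; since $r/\lambda>0$, both sides vanish. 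So from now on assume $\nu(f)>0$, so that $c_t:=\nu(V_tf)\in(0,\infty)$ for every $t$ (by \eqref{lem:nVn!} and \eqref{lem:nV::H2::Vf1}) with $c_t\to0$ by Proposition~\ref{prop:Vf1::H1:H2::Y}.

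Next I would collect two facts that reduce matters to the behaviour of $h,h_r$ along the ray $\{c\phi:c>0\}$. (a) By Proposition~\ref{prop:Vf2}, for each $\epsilon\in(0,1)$ there is $T_\epsilon$ with $(1-\epsilon)c_t\phi\le V_tf\le(1+\epsilon)c_t\phi$ for $t>T_\epsilon$; applying the monotone $h,h_r$ and the functional equations gives
\[
	h((1-\epsilon)c_t\phi)\le e^{t\lambda}h(f)\le h((1+\epsilon)c_t\phi),\qquad h_r((1-\epsilon)c_t\phi)\le e^{tr}h_r(f)\le h_r((1+\epsilon)c_t\phi).
\]
(b) Since the integral term in \eqref{eq:Vf1.1} is nonnegative, $V_s(c\phi)\le P_s^\beta(c\phi)=ce^{\lambda s}\phi$; feeding this into the functional equations (with $c'=ce^{\lambda s}$) shows $c\mapsto h(c\phi)/c$ and $c\mapsto h_r(c\phi)/c^{r/\lambda}$ are non-increasing on $(0,\infty)$, so $h((1+\epsilon)c\phi)\le\tfrac{1+\epsilon}{1-\epsilon}h((1-\epsilon)c\phi)$ and $h_r((1+\epsilon)c\phi)\le(\tfrac{1+\epsilon}{1-\epsilon})^{r/\lambda}h_r((1-\epsilon)c\phi)$. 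Strict positivity of $h(c\phi),h_r(c\phi)$ for $c>0$ (needed to divide below) I would get from Proposition~\ref{prop:Vf2} again: $v_t\le 2\nu(v_t)\phi$ for $t$ large, so $c\phi\ge v_t=V_t(\infty\mathbf 1_E)$ once $\nu(v_t)\le c/2$, giving $h(c\phi)\ge h(v_t)=e^{t\lambda}>0$ and likewise $h_r(c\phi)\ge e^{tr}>0$; combined with (a) this also forces $h(f),h_r(f)>0$.

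Now the key identity: for $t>T_\epsilon$, (a) and (b) give $e^{t\lambda}h(f)=\rho_t\,h((1-\epsilon)c_t\phi)$ and $e^{tr}h_r(f)=\theta_t\,h_r((1-\epsilon)c_t\phi)$ with $\rho_t\in[1,\tfrac{1+\epsilon}{1-\epsilon}]$ and $\theta_t\in[1,(\tfrac{1+\epsilon}{1-\epsilon})^{r/\lambda}]$. Raising the first to the power $r/\lambda\in(0,1]$ and dividing the second by it cancels $e^{tr}=(e^{t\lambda})^{r/\lambda}$ and yields
\[
	R(f)=\frac{\theta_t}{\rho_t^{\,r/\lambda}}\,R\big((1-\epsilon)c_t\phi\big),\qquad R(g):=\frac{h_r(g)}{h(g)^{r/\lambda}}\in(0,\infty),\quad t>T_\epsilon.
\]
Taking $f=\infty\mathbf 1_E$, where $R(f)=1$ and $c_t=\nu(v_t)$, gives $R((1-\epsilon)\nu(v_t)\phi)=\rho_t^{\,r/\lambda}/\theta_t\in[(\tfrac{1-\epsilon}{1+\epsilon})^{r/\lambda},(\tfrac{1+\epsilon}{1-\epsilon})^{r/\lambda}]$ for $t>T_\epsilon$. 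Since $t\mapsto\nu(v_t)$ is continuous (as in the proof of Lemma~\ref{prop:PVf}, $u\mapsto e^{-\lambda u}\nu(v_u)$ is absolutely continuous) and tends to $0$, the values $(1-\epsilon)\nu(v_t)$, $t>T_\epsilon$, fill an interval $(0,\delta_\epsilon)$, so $R(c\phi)\in[(\tfrac{1-\epsilon}{1+\epsilon})^{r/\lambda},(\tfrac{1+\epsilon}{1-\epsilon})^{r/\lambda}]$ for all $c\in(0,\delta_\epsilon)$; letting $\epsilon\downarrow0$ gives $R(c\phi)\to1$ as $c\downarrow0$.

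Finally, back in the displayed identity for a general $f$ with $\nu(f)>0$: the left side $R(f)$ is independent of $t$ while $(1-\epsilon)c_t\to0$, so $R((1-\epsilon)c_t\phi)\to1$ and $R(f)$ is squeezed in $[(\tfrac{1-\epsilon}{1+\epsilon})^{r/\lambda},(\tfrac{1+\epsilon}{1-\epsilon})^{r/\lambda}]$ for every $\epsilon$; hence $R(f)=1$, i.e. $h_r(f)=h(f)^{r/\lambda}$, which together with the $\nu(f)=0$ case is the assertion. I expect the only genuinely delicate points to be the uniform ray-approximation behind (a), which is exactly what Proposition~\ref{prop:Vf2} is designed to provide, and the ``interval-filling plus $t$-independence'' squeeze in the last two steps (in particular one should check there that the $\varliminf$-defined $G^{\mathbf t}$ really obeys the clean functional equation, which is Lemma~\ref{prop:Gtb:H1:H2:H3:H4}); everything else is bookkeeping with the functional equations.
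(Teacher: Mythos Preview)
Your proof is correct, and it follows a genuinely different route from the paper's. The paper introduces the auxiliary family $Q_t g:=e^{-rt}\bigl(1-e^{-G_r(gv_t)}\bigr)$, shows $t\mapsto Q_t(u\mathbf 1_E)$ is non-increasing (using concavity of $u\mapsto V_s(uv_t)$), proves the limit $q(u)=\lim_t Q_t(u\mathbf 1_E)$ is concave on $[0,1]$ (using the concavity hypothesis on $G_r$), hence continuous, and then identifies $q(u)=u^{r/\lambda}$ from an approximate self-similarity of $v_t$; finally it sandwiches $V_tf/v_t$ around $1-e^{-G^{\mathbf t}f}$ via Proposition~\ref{prop:Vf2} and applies $Q_t$ and $q$. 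You instead work directly with the ratio $R=(1-e^{-G_r})/(1-e^{-G^{\mathbf t}})^{r/\lambda}$, reduce via Proposition~\ref{prop:Vf2} to the one-parameter ray $\{c\phi\}$, and replace the concavity-based monotonicity by the elementary inequality $V_s(c\phi)\le ce^{\lambda s}\phi$, which yields the scaling control $h((1+\epsilon)c\phi)\le\tfrac{1+\epsilon}{1-\epsilon}h((1-\epsilon)c\phi)$ (and its $h_r$-analogue). Calibrating at $f=\infty\mathbf 1_E$ and using continuity of $t\mapsto\nu(v_t)$ then pins down $R(c\phi)\to1$, and the squeeze finishes.

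What each approach buys: the paper's $q$-argument is more conceptual and isolates a clean limit object, but genuinely uses the concavity of $G_r$ to get continuity of $q$ (needed to pass to the limit in \eqref{eq:GQ.9}). Your argument is more direct, never introduces $q$, and in fact uses only the \emph{monotonicity} of $G_r$ together with the functional equation --- the concavity hypothesis is not invoked. The price is that you rely on the ``interval-filling'' of $t\mapsto\nu(v_t)$ (its continuity on $(T,\infty)$, established in the proof of Lemma~\ref{prop:PVf}) rather than on continuity of $q$; this is a fair trade. One cosmetic point: your $\rho_t,\theta_t$ for $f=\infty\mathbf 1_E$ are of course different constants from those for a generic $f$, but since only the uniform bounds $[1,(\tfrac{1+\epsilon}{1-\epsilon})^{r/\lambda}]$ are used, no confusion arises.
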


\begin{proof}
	Let $(Q_t)_{t\geq 0}$ be the family of $[0,\infty)$-valued functionals on $\mathcal B(E,[0,\infty])$ given by
\[
	Q_tg
	:= e^{- r t}( 1 - e^{-G_r(gv_t)} ).
\]
	Note that, by \eqref{lem:sv2!}, $v_t(x)>0$ for all $x\in E$.
	It follows  from Proposition \ref{prop:Vf1::H1:H2::Y} that $v_t(x)<\infty$ for all $x\in E$ and all $t> T$.
	Thus $v_t(\cdot)$ is a $(0,\infty)$-valued function for all $t> T$.

	We claim that for any $u \in [0,1]$,
	$Q_t(u \mathbf 1_E)$ is non-increasing in $t\in (0,\infty)$.
	In particular, we can define the $[0,\infty]$-valued function
	$q(u):= \lim_{t\to \infty} Q_t(u \mathbf 1_E), u\in [0,1]$.
	In fact, note that $\mathbb P_{\delta_x}[e^{- X_s(uv_t)}] = e^{-V_s(uv_t)},x\in E, s,t>0, u \geq 0$.
	Lemma \ref{Fact:CP!} says that, for all $s,t > 0$ and $x\in E$, $u\mapsto V_s(uv_t)(x) $ is a $[0,\infty]$-valued concave function on $[0,\infty)$.
	Therefore, for $u\in [0,1]$, we have
\begin{equation}
	V_s(uv_t)
	\geq uV_s(v_t) + (1-u) V_s(0\cdot v_t)
	= uv_{s+t},
	\quad s,t > 0.
\end{equation}
	Using this, we get
\begin{align}
	& Q_{t+s}(u \mathbf 1_E)
	= e^{- r (t+s)} ( 1-e^{-G_r(uv_{t+s})} )
	\leq e^{- r(t+s)}( 1-e^{-G_r[V_s(uv_t)]} ) \\
	& = e^{-r t}( 1-e^{-G_r(uv_t)} )
	= Q_t(u \mathbf 1_E),
	\quad s,t > 0, u \in [0,1].
\end{align}

	We want to show that $q(u)= u^{r/\lambda}, u\in [0,1]$.
	In order to do this, we first show that
\begin{equation} \label{lem:qC!}
\begin{minipage}{0.9\textwidth}
	the function $q$ is non-decreasing and concave on $[0,1]$ with $q(1) = 1$.
	In particular, thanks to Lemma \ref{Fact:CR!}, $q$ is a continuous function on $(0,1]$.
\end{minipage}
\end{equation}
	In fact, from $G_r(\infty \mathbf 1_E) = \infty$ and $V_t(\infty \mathbf 1_E) = v_t$, we get
	\[
	Q_t( \mathbf 1_E)
	= e^{- r t} ( 1-e^{-G_r v_t} )
	= e^{- r t} e^{r t}( 1-e^{-G_r (\infty \mathbf 1_E)} )
	= 1,
	\quad t\geq 0.
	\]
	Therefore $q(1) = 1$.
	The above argument also says that $G_r v_t < \infty$ for each $t>0$.
	Now from the condition that $G_r$ is monotone concave, we have that for all $t>0$, the map $u \mapsto G_r(uv_t)$ is a non-decreasing and concave $[0,\infty)$-valued function on $[0,1]$.
	From Lemma \ref{lem:CE} we get that, for each $t> 0$, $u \mapsto Q_t(u \mathbf 1_E)$ is a $[0,\infty)$-valued, non-decreasing and concave function on $[0,1]$.
	Since the limit of concave functions is concave, we get \eqref{lem:qC!} by letting $t\to \infty$.

	We now show that
\begin{equation} \label{eq:GQ}
	q(u) = u^{r/\lambda},\quad u\in [0,1].
\end{equation}
To see this, note that for all $s\geq 0$, $t>T$ and $x\in E$, we have that
\begin{align}
	& e^{\lambda s}(\phi^{-1}v_t)(x)
	\overset{\text{Proposition \ref{prop:Vf2}}}= e^{\lambda s}\nu(v_{t})
		(1+ C^4_{t,x,\infty \mathbf 1_E})
	\\&\overset{\eqref{eq:nVR}}=\nu(v_{t+s})
	\exp\{-\lambda sC^{13}_{t,s,\infty \mathbf 1_E}\} (1+ C^4_{t,x,\infty \mathbf 1_E})
	\\&\overset{\text{Proposition \ref{prop:Vf2}}}= (\phi^{-1}v_{t+s})(x)
	(1+ C^4_{t+s,x,\infty \mathbf 1_E})^{-1} \exp\{-\lambda sC^{13}_{t,s,\infty \mathbf 1_E}\} (1+ C^4_{t,x,\infty \mathbf 1_E})
	\\& \label{eq:GQ.5}= (\phi^{-1}v_{t+s})(x) (1+C^{17}_{s,t,x}),
\end{align}
for some real $C_{s,t,x}^{17}$ with $\lim_{t\to \infty}\sup_{x\in E} |C_{s,t,x}^{17}| =0$.
	Thus, we know that for all $s\geq 0$ and $\epsilon >0$ there exists $T^1_{s,\epsilon}>0$ such that
	\begin{equation} \label{eq:GQ.6}
	1-\epsilon
	\leq \frac{e^{\lambda s}v_t(x)}{v_{t+s}(x)}
	\leq 1+\epsilon,
	 \quad x\in E, t> T^1_{s,\epsilon}.
	 \end{equation}
	From this we get that for all $s\geq 0, \epsilon > 0$, $t\ge T^1_{s,\epsilon}$, and $u\geq 0$,
\begin{align}
	& Q_{t+s}[ (1-\epsilon)u \mathbf 1_E ]
	= e^{-r (t+s)}( 1-e^{-G_r[(1-\epsilon)uv_{t+s}]} )
	\overset{\eqref{eq:GQ.6}}\leq e^{-r t} e^{-r s}( 1- e^{-G_r(ue^{\lambda s}v_t)} )
	\\&= e^{-r s}Q_t(ue^{\lambda s} \mathbf 1_E)
	\overset{\eqref{eq:GQ.6}}\leq e^{-r(t+s)}( 1-e^{-G_r[(1+\epsilon)uv_{t+s}]})
	\\&= Q_{t+s}[ (1+\epsilon)u \mathbf 1_E ].
\end{align}
	Letting $t\to \infty$ in the display above, we get that for all $s\geq 0$, $\epsilon > 0$ and $u$ satisfying $0 < (1 - \epsilon) u < (1+\epsilon)u < 1$, it holds that
\begin{align} \label{eq:GQ.9}
	& q((1-\epsilon)u)
	\leq e^{-r s}q(u e^{\lambda s})
	\leq q((1+\epsilon)u).
\end{align}
	Using \eqref{lem:qC!}, letting $\epsilon \to 0$ and then $u \uparrow 1$ in \eqref{eq:GQ.9}, we get that
\[
	q(1)
	=1
	= e^{- r s} q(e^{\lambda s}),
	\quad s \geq 0.
\]
	In other word, $q(u) = u^{r/\lambda}$ for $u\in (0,1]$.
	Finally noticing that $q$ is non-negative and non-decreasing on $[0,1]$, we also have $q(0) = 0$.

	We are now ready to finish the proof of Lemma \ref{prop:G*:H1:H2:H3:H4}.
	Fix an unbounded increasing positive sequence $\mathbf t=(t_n)_{n\in \mathbb N}$ and a function $f\in \mathcal B(E,[0,\infty])$, we only need to  prove that $1-G_r f =(1- G^{\mathbf t}f)^{r/\lambda}.$
	
	From the definition of $G^{\mathbf t} f$, we can choose a subsequence $\mathbf t'=(t'_n)_{n \in \mathbb N}$ of $\mathbf t$ such that for each $n\in \mathbb N$, we have $t'_n > T$ and
\begin{equation} \label{eq:vp.5}
		G^{\mathbf t}f = \Gamma_{t_n'}f + C^{18}_n
\end{equation}	
for some real $C^{18}_n$ (depending on both $f$ and $\mathbf t'$) such that $\lim_{n\to \infty} |C^{18}_n| =0$.	
	
Therefore, we have for any $n \in \mathbb N$,
\begin{align}
	& 1 - e^{- G^{\mathbf t}f}
		\overset{\eqref{eq:vp.5}}= 1 - e^{- \Gamma_{t'_n} f  - C^{18}_n}
		= (1 - e^{- \Gamma_{t'_n} f }) e^{- C^{18}_n} + (1 - e^{-C_n^{18}})
		\\&\overset{\eqref{lem:Gfnv!}}= \frac{1 - e^{- \nu( V_{(t_n')}f)}}{1- e^{- \nu(v_{(t_n')})}}  e^{ - C^{18}_n} + (1 - e^{-C_n^{18}})
		\\& \label{eq:vp.6}=
	\frac{\nu (V_{(t_n')} f)}{\nu(v_{(t_n')})}  (1+C^{19}_n) + (1 - e^{-C_n^{18}})
\end{align}
for some real $C^{19}_n$ with $\lim_{n\to \infty} |C^{19}_n| =0$, by Proposition
 \ref{prop:Vf1::H1:H2::Y} and the fact that $(1- e^{-x})/x \xrightarrow[x\to 0]{}1$.
	Thus
\begin{equation} \label{eq:vp.61}
	1 - e^{- G^{\mathbf t}f}
	 \overset{\text{ Proposition \ref{prop:Vf2}}}=  \frac{V_{(t_n')}f(x)}{v_{(t_n')}(x)}
\frac{1+C^4_{t_n',x,\infty \mathbf 1_E}}{1+C^4_{t_n',x,f}} (1+C^{19}_n) + (1 - e^{-C^{18}_n}).
\end{equation}
	It is elementary to see that
	\[
	\lim_{n\to \infty} \sup_{x\in E} \left|\frac{1+C^4_{t_n',x,\infty \mathbf 1_E}}{1+C^4_{t_n',x,f}} (1+C^{19}_n) -1 \right| = 0.
	\]
	Therefore, for any $\epsilon > 0$, there exists $N_\epsilon>0$ such that for any $n>N_\epsilon$,
\begin{equation} \label{eq:vp.62}
	\Big| \Big(\frac{1+C^4_{t_n',x,\infty \mathbf 1_E}}{1+C^4_{t_n',x,f}} (1+C^{19}_n)\Big)^{-1} -1 \Big| < \epsilon;
	\text{~and~} |1 - e^{-C^{18}_n}| < \epsilon.
\end{equation}
	Note from \eqref{Fact:BV!}, $0 \leq V_tf \leq v_t$ for each $t\geq 0$.
	It is elementary to verify from \eqref{eq:vp.61} and \eqref{eq:vp.62} that, for any $\epsilon>0$, $n>N_\epsilon$ and $x\in E$,
\[
	(1-\epsilon) \left((1 - e^{- G^{\mathbf t}f} - \epsilon)\vee 0\right)
	\leq \frac{V_{(t_n')}f(x)}{v_{(t'_n)}(x)}
	\leq (1+\epsilon) ( 1 - e^{- G^{\mathbf t}f} + \epsilon) \wedge 1.
\]
	Since $G_r$ is a monotone functional, we know that for each $t\geq 0$, $Q_t$ is also a monotone functional.
	This implies that  for any $\epsilon>0$ and $n>N_\epsilon$,
\begin{align} \label{eq:vp.7}
	&Q_{(t'_n)} \left[(1-\epsilon) \left((1 - e^{- G^{\mathbf t}f} - \epsilon)\vee 0\right) \mathbf 1_E \right]
	\leq Q_{(t'_n)}\left( \frac{V_{(t'_n)}f}{v_{(t'_n)}} \right)
	\\&\leq Q_{(t'_n)}\left[ \left((1+\epsilon) ( 1 - e^{- G^{\mathbf t}f} + \epsilon) \wedge 1\right) \mathbf 1_E \right].
\end{align}
	Note from the definition of $(Q_t)_{t\geq 0}$ and $G_r$, we always have for $t>T$ that
\[
	Q_t \left( \frac{V_tf}{v_t}  \right)
	= e^{- r t}( 1 - e^{- G_rV_tf}  )
	= 1- e^{- G_r f}.
\]
	Therefore, taking $n \to \infty$ in \eqref{eq:vp.7}, and using \eqref{eq:GQ}  we get that
\[
	\left((1-\epsilon) \left((1 - e^{- G^{\mathbf t}f} - \epsilon)\vee 0\right) \right)^{r/\lambda}
	\leq 1 - e^{- G_r f}
	 \leq \left((1 + \epsilon) (1 - e^{- G^{\mathbf t} f} + \epsilon)\wedge 1 \right)^{r/\lambda}.
\]
	Taking $\epsilon \to 0$, we get the desired result.
\end{proof}

\begin{proof}[Proof of Proposition \ref{prop:G}]
	Combining  Lemmas \ref{prop:Gtb:H1:H2:H3:H4} and \ref{prop:G*:H1:H2:H3:H4}
	(taking $r=\lambda$)
	with a sub-sub-sequence type argument, we can easily get the conclusion of Proposition \ref{prop:G}.
\end{proof}

\subsection{Proof of Proposition \ref{prop::GD:H1:H2:H3:H4::Y}}
\begin{proof}[Proof of Proposition \ref{prop::GD:H1:H2:H3:H4::Y}] \label{sec:GD}
	We first consider the case that $g = 0$ $\nu$-almost surely.
	From \eqref{eq:Vf1.1} and \eqref{asp:H2!}, we have
\begin{align}\label{eq:GD.1}
	&V_1 g_n(x)
	\leq P^\beta_1 g_n(x)
	\leq C^{20} \phi(x) \nu(g_n),
	\quad n \in \mathbb N, x\in E,
\end{align}
	where $C^{20} := \sup_{x\in E, f\in L_+^1(\nu)}e^{\lambda }(1+|H_{1,x,f}|)$.
	By the bounded convergence theorem, we have
\begin{equation} \label{eq:GD.11}
	\lim_{n\to \infty} \nu(g_n) = \nu(g) = 0.
\end{equation}
	On the other hand, from \eqref{eq:nuP.1}, we know that $ t\mapsto e^{-\lambda t}\nu(v_t)$ is a non-increasing $(0,\infty)$-valued continuous function on $(T,\infty)$.
	Since $\lambda <0$, we have
\begin{equation} \label{eq:GD.12}
\begin{minipage}{0.9\textwidth}
	$ t\mapsto \nu(v_t)$ is a strictly decreasing $(0,\infty)$-valued continuous function on $(T,\infty)$.
\end{minipage}
\end{equation}
	By Proposition \ref{prop:Vf1::H1:H2::Y}, we have
\begin{equation} \label{eq:GD.13}
	\lim_{t\to \infty}\nu(v_t) =0.
\end{equation}
	Using \eqref{eq:GD.11}, \eqref{eq:GD.12} and \eqref{eq:GD.13} we can see that there exist $n_0>0$ and a sequence $\{t_n: n>n_0\}$ of positive numbers such that
\begin{equation} \label{eq:GD.14}
	\lim_{n\to \infty} t_n = \infty
\end{equation}
	and that, for any $n>n_0$,
\begin{equation} \label{eq:GD.2}
2C^{20} \nu(g_n)
	\leq \nu(v_{t_n}).
\end{equation}
	It follows from Proposition \ref{prop:Vf2} that there exists $n_1 > n_0$ such that for all $n>n_1$ and $x\in E$,
\begin{equation} \label{eq:GD.25}
	\nu(v_{t_n})\leq 2\phi(x)^{-1} v_{t_n}(x).
\end{equation}
	Now, for any $n>n_1$ and $x\in E$, we have
\begin{align}
	& V_1g_n(x)
\overset{\eqref{eq:GD.1}}\leq C^{20} \phi(x)\nu(g_n)
	\overset{\eqref{eq:GD.2}}\leq \frac{1}{2}\phi(x)\nu(v_{t_n})
	\\\label{eq:GD.26} & \overset{\eqref{eq:GD.25}}\leq v_{t_n}(x).
\end{align}
	Therefore, for any $n>n_1$,
\begin{equation}
	1 - e^{- Gg_n}
	\overset{\text{\eqref{eq:G.0}}}= e^{- \lambda} (1- e^{- GV_1g_n})
	\leq e^{- \lambda} (1- e^{- G v_{t_n}})
	= e^{- \lambda} e^{\lambda t_n},
\end{equation}
	where in the inequality above we used \eqref{eq:GD.26} and the monotonicity of $G$ (Proposition \ref{prop:G}), and in the last equality, we used Proposition \ref{prop:G} with $f = \infty \mathbf 1_E$.
	Letting $n\to \infty$ in the display above, noticing \eqref{eq:GD.14} and the fact that $\lambda < 0$,
	we get the desired result in this case.

	We now consider the case that $g_n \downarrow g$ pointwisely
	where $\nu(g) > 0$.
	The monotonicity of $G$ (Proposition \ref{prop:G}) implies that $\lim_{n \to \infty} Gg_n$ exists and is greater
	than $G g$.
	So we only need to show that $\lim_{n\to \infty} Gg_n \leq Gg$.
	From Proposition \ref{prop:Vf2}, for any $\epsilon>0$ there exists $T^2_\epsilon >0$ such that for any $t\geq T^2_\epsilon, x\in E$ and $f\in \mathcal B(E,[0,\infty])$,
\begin{equation} \label{eq:GD.n1}
	(1-\epsilon)\phi(x) \nu(V_tf)\leq V_tf(x)
	\leq (1+\epsilon)\phi(x) \nu(V_tf).
\end{equation}
	Therefore, we have for any $\epsilon>0$, $t \geq T^2_\epsilon$, $x\in E$ and $f, h\in \mathcal B(E,[0,\infty])$ with $\nu(h) >0$ that
\begin{align}
	&V_tf(x)
	\overset{\eqref{eq:GD.n1}}\geq (1-\epsilon)\phi(x) \nu(V_tf)
	\\& \overset{\eqref{lem:nVn!}}= (1-\epsilon)\phi(x) \frac{\nu(V_tf)}{\nu(V_th)} \nu(V_th)
\overset{\eqref{eq:GD.n1}}\geq \frac{1-\epsilon}{1+\epsilon} \frac{\nu(V_tf)}{\nu(V_th)} V_th(x)
	\\ \label{eq:GD.n2} & \geq \left(\frac{1-\epsilon}{1+\epsilon} \frac{\nu(V_tf)}{\nu(V_th)} \wedge 1\right) V_th(x).
\end{align}
	Since $G$ is a monotone concave function (Proposition \ref{prop:G}), we know that for any $f\in \mathcal B(E,[0,\infty])$, $u \mapsto 1 - e^{-G(uf)}$ is a concave function on $[0,1]$ (Lemma \ref{lem:CE}); and therefore,
\begin{equation} \label{eq:GD.n3}
	1 - e^{- G(uf)}\geq u(1- e^{- Gf}) + (1-u) (1- e^{- G(0  \mathbf 1_E)}) = u(1- e^{- Gf}), \quad u \in [0,1].
\end{equation}
	Now we have for any $\epsilon>0$, $t \geq T^2_\epsilon$, $x\in E$ and $f, h\in \mathcal B(E,[0,\infty])$ with $\nu(h) >0$ that
\begin{align}
	&1 - e^{-Gf}
	\overset{\text{Proposition \ref{prop:G}}}= e^{-\lambda t}(1- e^{- GV_t f})
	\overset{\eqref{eq:GD.n2}}\geq e^{-\lambda t}\left(1- e^{- G\left( \left(\frac{1-\epsilon}{1+\epsilon} \frac{\nu(V_tf)}{\nu(V_th)} \wedge 1\right) V_th\right) }\right)
	\\ &\overset{\eqref{eq:GD.n3}}\geq e^{-\lambda t} \left(\frac{1-\epsilon}{1+\epsilon} \frac{\nu(V_tf)}{\nu(V_th)} \wedge 1 \right) \left(1- e^{- GV_th }\right)
	\overset{\text{Proposition \ref{prop:G}}}= \left(\frac{1-\epsilon}{1+\epsilon} \frac{\nu(V_tf)}{\nu(V_th)} \wedge 1\right) (1 - e^{- Gh}).
\end{align}
	Replacing $f$ by $g$, $h$ by $g_n$, and then taking $n\to \infty$, noticing that by monotone convergence theorem $\nu(V_tg_n) \xrightarrow[n\to \infty]{} \nu(V_tg)$, we get
\begin{equation}
	1 - e^{- Gg} \geq \frac{1 - \epsilon}{1+\epsilon}\lim_{n\to \infty}(1 - e^{- Gg_n}),
\end{equation}
	as desired (noticing $\epsilon > 0$ is arbitrary).
\end{proof}

\section{Proofs of Propositions \ref{prop:EQ}--\ref{prop:UC}} \label{sec:propsforthm2}

\subsection{Proof of Proposition \ref{prop:EQ}} \label{sec:EQ}
\begin{proof}[Proof of Proposition \ref{prop:EQ} (1)]	
	Denote by $G$ the functional given by Proposition \ref{prop:G}; and by $\mathbf Q_\lambda$ the Yaglom limit given by Theorem \ref{Theorem:Y:H1:H2:H3:H4}.
	By \eqref{eq:Y.000}, we know that $G$ is the log-Laplace functional of $\mathbf Q_\lambda$.
	Now note that for $t\geq 0$,
\begin{align}
	&(\mathbf Q_\lambda \mathbb P) (\|X_t\|>0)
	\overset{\text{\eqref{eq:OY.1}}}= \int_{\mathcal M_f(E)}(1-e^{-\mu(v_t)})\mathbf Q_\lambda (d\mu)
	\overset{\eqref{eq:Y.000}}= 1 - e^{-G v_t}
	\\&\overset{\text{Proposition \ref{prop:G}}}= e^{\lambda t}.  \label{eq:EQ.3}
\end{align}
	Therefore, we have that for all $f\in \mathcal B(E,[0,\infty])$ and $t \geq 0$,
\begin{align}
	&(\mathbf Q_\lambda \mathbb P)[1 - e^{-X_t(f)} | \|X_t\| > 0]
	\overset{\text{\eqref{eq:EQ.3}}}= e^{- \lambda t} (\mathbf Q_\lambda \mathbb P)[1- e^{-X_t(f)}]
	\\& \overset{\eqref{eq:BGD.2}}= e^{- \lambda t} \int_{\mathcal M_f(E)} (1- e^{-\mu(V_tf)} ) \mathbf Q_\lambda (d\mu)
	\overset{\eqref{eq:Y.000}}= e^{-\lambda t} (1 - e^{- G V_tf})
	\\& \overset{\text{Proposition \ref{prop:G}}}= 1 - e^{-Gf}
	\overset{\eqref{eq:Y.000}} =\int_{\mathcal M_f(E)} (1 - e^{-\mu(f)}) \mathbf Q_\lambda (d\mu).
\end{align}
	According to \cite[Theorem 1.17]{Li2011MeasureValued}, this says that
\[
	(\mathbf Q_\lambda \mathbb P)(\cdot |\|X_t\| > 0) = \mathbf Q_{\lambda}(\cdot),
	\quad t \geq 0.
\]
	Therefore $\mathbf Q_\lambda$ is a QSD of $X$.
	From \eqref{eq:EQ.3} and \eqref{eq:S.2}, its mass decay rate is $\lambda$.
\end{proof}

\begin{proof}[Proof of Proposition \ref{prop:EQ} (2)]
	Denote by $\gamma = r / \lambda \in (0,1)$.
	We first claim that there exists a $\mathbb Z_+$-valued random variable $\{Z;P\}$ with probability generating function $P[s^Z] = 1 - (1- s)^{\gamma}, s\in [0,1]$.
	To see this, we set
\[
	P(Z = n) = \frac{\gamma(1-\gamma ) \cdots (n-1-\gamma  )}{n!},
	\quad n \in \mathbb Z_+.
\]
	Using Newton's binomial theorem (see \cite[Exercise 8.22]{Rudin1976Principles}), we get
\[
	1 - (1 - s)^\gamma
	= \sum_{n = 1}^\infty \frac{\gamma (1-\gamma)\cdots (n-1-\gamma )}{n!} s^n,
	\quad s\in [0,1],
\]
	thus, such a random variable exists.
	
	Now let $\{(Y_n)_{n \in \mathbb N}; P\}$
	be an $\mathcal M^o_f(E)$-valued i.i.d. sequence
	with law of the Yaglom limit $\mathbf Q_\lambda$.
	Let $Z$ and $(Y_n)_{n\in \mathbb N}$ be independent of each other.
	Define the probability $\mathbf Q_r$ on $\mathcal M^o_f(E)$ as the law of the finite random measure $\sum_{n=1}^Z Y_n$.
	
	In the rest of this proof, we will argue that $\mathbf Q_r$ is a QSD of $X$ with mass decay rate $r$.
	To do this, we calculate that
\begin{align}
	&e^{- \mathscr L_{\mathbf Q_r} f}
	= P[ e^{-\sum_{n=1}^Z Y_n(f)} ] 	
	= P\left[P\left[ \prod_{n=1}^Z e^{-Y_n(f)} \middle | \sigma(Z)\right]\right]
	= P \left[ e^{-Z \cdot \mathscr L_{\mathbf Q_\lambda } f}\right]
	\\&= 1 - (1 - e^{- \mathscr L_{\mathbf Q_\lambda} f})^\gamma, \quad f\in \mathcal B(E,[0,\infty]).  \label{eq:EQ.4}
\end{align}
	Therefore, for each $t> 0$ and $f\in \mathcal B(E,[0,\infty])$, we have
\begin{align}
	&(\mathbf Q_r \mathbb P)\left[ 1 - e^{-X_t (f)} \middle|\|X_t\|>0 \right]
	= (\mathbf Q_r \mathbb P)(\|X_t\| >0)^{-1} \cdot (\mathbf Q_r \mathbb P) [1 - e^{- X_t(f)}]
	\\&\overset{\eqref{eq:BGD.2}, \eqref{eq:OY.1}}= (1 - e^{- \mathscr L_{\mathbf Q_r} v_t})^{-1}  (1 - e^{- \mathscr L_{\mathbf Q_r} V_tf})
	\overset{\text{\eqref{eq:EQ.4}}}= (1 - e^{- \mathscr L_{\mathbf Q_\lambda} v_t})^{-\gamma}(1 - e^{- \mathscr L_{\mathbf Q_\lambda} V_tf})^{\gamma}
	\\&\overset{\eqref{eq:BGD.2}, \eqref{eq:OY.1}}= (\mathbf Q_\lambda \mathbb P)\left[1 - e^{- X_t(f)}\middle| \|X_t\|>0\right]^{\gamma}
	\overset{\text{Proposition \ref{prop:EQ} (1)}}= (1 - e^{- \mathscr L_{\mathbf Q_\lambda} f})^{\gamma}
	\overset{\text{\eqref{eq:EQ.4}}}= 1 - e^{- \mathscr L_{\mathbf Q_r} f}.
\end{align}
	This proves that $\mathbf Q_r$ is a QSD.
	To see its mass decay rate is $r$, we calculate that for each $t\geq 0$,
\begin{align}
	&(\mathbf Q_r \mathbb P)(\|X_t\|>0)
	\overset{\eqref{eq:OY.1}}= 1 - e^{- \mathscr L_{\mathbf Q_r} v_t}
	\\& \overset{\text{\eqref{eq:EQ.4}}}= (1 - e^{- \mathscr L_{\mathbf Q_\lambda} v_t})^\gamma
	\overset{\eqref{eq:OY.1}}= (\mathbf Q_\lambda \mathbb P) (\|X_t > 0\|)^\gamma \overset{\text{Proposition \ref{prop:EQ} (1)}}= e^{ r t}. \qedhere
\end{align}
\end{proof}

\subsection{Proof of Proposition \ref{prop:CQ}} \label{sec:CQ}

\begin{proof}[Proof of Proposition \ref{prop:CQ} (1)]
	First observe that for any $t\geq 0$,
\begin{equation} \label{eq:CQ.1}
	e^{rt} = (\mathbf Q_r^*\mathbb P)(\|X_t\|>0)
	\overset{\eqref{eq:OY.1}}= 1 - e^{- \mathscr L_{\mathbf Q_r^*}(v_t)}.
\end{equation}
	According to Lemma \ref{Fact:CP!}, for any $t>0$, we know that $u\mapsto \mathscr L_{\mathbf Q_r^*} (uv_t)$ is a $[0,\infty]$-valued concave function on $[0,\infty)$.
	According to Lemma \ref{lem:CE}, for any $t>0$, we know that $u \mapsto 1 - e^{- \mathscr L_{\mathbf Q_r^*}(uv_t)}$ is a $[0,1]$-valued concave function on $[0, \infty)$.
	 In particular, we have for any $t>0$ and $u \in [0,1]$ that
\begin{equation} \label{eq:CQ.2}
		1 - e^{- \mathscr L_{\mathbf Q_r^*}(uv_t)} \geq u(1 - e^{- \mathscr L_{\mathbf Q_r^*}(1\cdot v_t)}) + (1-u) (1 - e^{- \mathscr L_{\mathbf Q_r^*}(0 \cdot v_t)})  = u(1 - e^{- \mathscr L_{\mathbf Q_r^*}(v_t)}).
\end{equation}
Recall that $T^1_{s,\epsilon}$ is the constant given in \eqref{eq:GQ.6}. Now for any $s>0, \epsilon > 0$ and $t > T^1_{s, \epsilon}$ we have
\begin{align}
	& e^{rs} \overset{\text{\eqref{eq:CQ.1}}}= \frac{1 - e^{-\mathscr L_{\mathbf Q_r^*}v_{t+s}}}{1 - e^{-\mathscr L_{\mathbf Q_r^*}v_{t}}}
	\overset{\eqref{eq:GQ.6}}\geq \frac{1 - e^{-\mathscr L_{\mathbf Q_r^*}( \frac{e^{\lambda s}}{1+\epsilon}v_t )}}{1 - e^{-\mathscr L_{\mathbf Q_r^*}(v_{t})}}
	\overset{\eqref{eq:CQ.2}}\geq \frac{e^{\lambda s}}{1+\epsilon}.
\end{align}
	Letting $\epsilon \to 0$, we get the desired result.
\end{proof}

\begin{proof}[Proof of Proposition \ref{prop:CQ} (2)]
	From the definition of QSD, we know that $\mathbf Q_r^*$ has no concentration
	on $\{\mathbf 0\}$.
	Therefore $\mathscr L_{\mathbf Q_r^*}(\infty  \mathbf 1_E) = \infty$.
	According to Lemma \ref{Fact:CP!}, we know that $\mathscr L_{\mathbf Q_r^*}$ is a monotone concave functional.
	Knowing that $\mathbf Q^*_r$ is a QSD for $X$ with mass decay rate $r$, it can be verified that for each $f\in \mathcal B(E,[0,\infty])$ and $t\geq 0$,
\begin{align}
	&1 - e^{-\mathscr L_{\mathbf Q_r^*}f}
	= (\mathbf Q_r^* \mathbb P) \left[1 - e^{-X_t(f)} \middle|\|X_t\|>0 \right]
	\\&= e^{-rt}(\mathbf Q_r^*\mathbb P)[1 - e^{- X_t(f)}]
	\overset{\eqref{eq:BGD.2}}= e^{-rt} \int_{\mathcal M_f(E)} (1 - e^{-\mu(V_tf)}) \mathbf Q_r^*(d\mu)
	\\&= e^{-rt} (1 - e^{- \mathscr L_{\mathbf Q_r^*}V_tf}).
	\qedhere
\end{align}
	
\end{proof}

\subsection{Proof of Proposition \ref{prop:UC}} \label{sec:UC}
\begin{proof}[Proof of Proposition \ref{prop:UC}]
	This is now obvious from Lemma \ref{prop:G*:H1:H2:H3:H4} and the fact that $Gf = \lim_{t\to \infty} \Gamma_t f$ for $f\in \mathcal B(E,[0,\infty])$ (Theorem \ref{prop:G}).
\end{proof}

\appendix\section{}
\subsection{Extended values} \label{sec:EV}
	In this paper, we often work with the extended non-negative real number system $[0,\infty]$ which consists of the non-negative real line $[0,\infty)$ and an extra point $\infty$.
	We consider $[0,\infty]$ as the one point compactification of $[0,\infty)$; and therefore, it is a compact Hausdorff space.
	We also make the following conventions that
\begin{itemize}
\item
	$x + \infty = \infty$ for each $x\in [0,\infty]$;
\item
	$x \cdot \infty = \infty$ for each $x\in (0,\infty]$;
\item
	$\frac{1}{\infty} = 0$; $\frac{1}{0} = \infty$; $e^{-\infty} =0$; $-\log 0 = \infty$.
\end{itemize}
	Note that $ \infty \cdot 0$ has no meaning, but we use the convention that $\infty \cdot 0 = 0$ when we are dealing with indicator functions.
	For example, we may write expression like
\begin{equation}
	h(x)
	= g(x) \cdot  \mathbf 1_{A} (x)+ \infty \cdot \mathbf 1_{E\setminus A}(x), \quad x\in E,
\end{equation}
	as a shorthand of
\begin{equation}
	x =
\begin{cases}
	g(x) & \text{if $x\in A$},
	\\ \infty & \text{if $x\in E\setminus A$}.
\end{cases}
\end{equation}

\subsection{Concave functionals}
	We say an $\mathbb R$-valued (or $[0,\infty]$-valued) function $f$ on a convex subset $D$ of $\mathbb R$ is concave iff
\[
   	f(rx+(1-r) y)
 	\geq r f(x) + (1-r) f(y),
 	\quad x,y \in D, r \in [0,1].
\]
	The following lemmas about concave functions are elementary, we refer our readers to \cite[Chapter 6]{Dudley2002Real} for more details.

\begin{lem} \label{Fact:CR!}
	If $f$ is a non-decreasing $\mathbb R$-valued concave function on $(a,b]$ where $a<b$ in $\mathbb R$, then $f$ is continuous on $(a,b]$.
\end{lem}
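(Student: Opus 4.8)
The plan is to split the claim into two parts: continuity at every point of the open interval $(a,b)$, and left-continuity at the right endpoint $b$. For the interior, I would simply invoke the standard fact that a real-valued concave function on an interval is automatically continuous at every interior point (see \cite[Chapter 6]{Dudley2002Real}); since the restriction of $f$ to $(a,b)$ is still concave, this disposes of every point of $(a,b)$ with no further work.

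It then remains to treat $x = b$, where the only requirement is that $\lim_{x\uparrow b} f(x) = f(b)$. Since $f$ is non-decreasing and $\mathbb R$-valued, the left limit $L := \lim_{x\uparrow b} f(x)$ exists in $\mathbb R$ and satisfies $L \le f(b)$. For the reverse inequality I would fix some $x_0 \in (a,b)$ and, for $x \in (x_0,b)$, write $x$ as a convex combination $x = r x_0 + (1-r) b$ with $r = (b-x)/(b-x_0) \in (0,1)$; concavity then yields $f(x) \ge r f(x_0) + (1-r) f(b)$. Letting $x \uparrow b$ forces $r \downarrow 0$, so the right-hand side converges to $f(b)$, whence $L \ge f(b)$. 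Combining the two bounds gives $L = f(b)$, i.e.\ $f$ is left-continuous at $b$, which is the only continuity condition needed there.

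There is no genuine obstacle in this argument; the one point worth flagging is the division of labour between the two hypotheses. Monotonicity supplies the bound $L \le f(b)$ — concavity alone cannot, since $(a,b]$ has no points lying to the right of $b$ — while concavity supplies $L \ge f(b)$ via the convex-combination estimate above. The two one-sided bounds together establish continuity at $b$, and combined with the interior continuity this proves the lemma.
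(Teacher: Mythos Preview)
Your argument is correct. The paper itself does not supply a proof of this lemma: it simply declares the result elementary and refers the reader to \cite[Chapter 6]{Dudley2002Real}. Your two-step treatment --- invoking the standard interior-continuity of real-valued concave functions for points of $(a,b)$, and at the endpoint $b$ combining monotonicity (which gives $L\le f(b)$) with the convex-combination estimate from concavity (which gives $L\ge f(b)$) --- is precisely the kind of elementary verification the paper has in mind, so there is nothing to compare beyond noting that you have written out what the paper leaves to the cited reference.
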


\begin{lem} \label{Fact:CP!}
	Suppose that $\{Z; P\}$ is a $[0,\infty]$-valued random variable.
	Define $L(u):= - \log P[e^{- u Z}]$ with $u \in [0,\infty)$, then $L$ is a $[0,\infty]$-valued concave function on $[0,\infty)$.
\end{lem}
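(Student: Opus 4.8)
The plan is to verify the concavity inequality
\[
	L\big(ru+(1-r)v\big) \ge r\,L(u) + (1-r)\,L(v), \qquad u,v\in[0,\infty),\ r\in[0,1],
\]
directly, by reducing it to H\"older's inequality. First I would record the routine points: since $e^{-uZ}$ takes values in $[0,1]$ (with the conventions $e^{-\infty}=0$ and $e^{-0\cdot Z}=1$), the expectation $P[e^{-uZ}]$ lies in $[0,1]$, so $L(u)=-\log P[e^{-uZ}]$ is a well-defined element of $[0,\infty]$; and the cases $r\in\{0,1\}$ of the inequality are trivial. Thus it remains to treat $r\in(0,1)$.

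For fixed $r\in(0,1)$ and $u,v\ge 0$, the key step is the pointwise identity
\[
	e^{-(ru+(1-r)v)Z} = \big(e^{-uZ}\big)^{r}\,\big(e^{-vZ}\big)^{1-r},
\]
valid for every outcome, including on the event $\{Z=\infty\}$ (there both sides vanish when $ru+(1-r)v>0$, and both equal $1$ when $u=v=0$). Applying H\"older's inequality with conjugate exponents $p=1/r$ and $q=1/(1-r)$ to the nonnegative factors $(e^{-uZ})^{r}$ and $(e^{-vZ})^{1-r}$ gives
\[
	P\big[e^{-(ru+(1-r)v)Z}\big] = P\big[(e^{-uZ})^{r}(e^{-vZ})^{1-r}\big] \le P\big[e^{-uZ}\big]^{r}\,P\big[e^{-vZ}\big]^{1-r}.
\]
Taking $-\log$ of both sides reverses the inequality and, under the convention $-\log 0=\infty$, produces exactly $L(ru+(1-r)v)\ge r\,L(u)+(1-r)\,L(v)$, which is the desired concavity.

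There is no deep obstacle here; the only care needed is the bookkeeping with extended values. In particular, because $(e^{-uZ})^{r}$ and $(e^{-vZ})^{1-r}$ are both bounded by $1$, their $L^{p}$- and $L^{q}$-norms are finite, so H\"older's inequality applies with no $0\cdot\infty$ ambiguity; and when $P[e^{-uZ}]=0$ (so that $L(u)=\infty$) the displayed inequality still reads correctly as $\infty\ge\infty$, since $L$ is $[0,\infty]$-valued and the right-hand side of the concavity inequality is a sum of nonnegative extended reals. Alternatively, one could apply Jensen's inequality to the convex function $t\mapsto -\log t$; but the H\"older route is the most direct.
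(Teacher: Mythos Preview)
Your proof is correct. The paper itself does not provide a proof of this lemma: it simply declares the statement elementary and refers the reader to \cite{Dudley2002Real}. Your H\"older-inequality argument is the standard way to establish that $u\mapsto \log P[e^{-uZ}]$ is convex (equivalently, that $L$ is concave), and your handling of the extended values on $\{Z=\infty\}$ and of the case $P[e^{-uZ}]=0$ is careful and consistent with the paper's conventions in the appendix.
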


\begin{lem} \label{lem:CE}
	Suppose that $g$ is a concave function on some convex subset $D$ of $\mathbb R$, then so is $q:= 1- e^{-g}.$
\end{lem}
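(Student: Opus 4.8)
The plan is to write $q = \varphi \circ g$, where $\varphi$ is the single-variable function $\varphi(t) := 1 - e^{-t}$ (extended by $\varphi(\infty) = 1$ in accordance with the conventions of Subsection~\ref{sec:EV}), and then to invoke the classical principle that a non-decreasing concave function composed with a concave function is again concave.

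First I would verify the two properties of $\varphi$ on which everything rests. It is non-decreasing, since $t \mapsto e^{-t}$ is non-increasing; and it is concave, i.e. $\varphi(ra+(1-r)b) \ge r\varphi(a)+(1-r)\varphi(b)$ for all $a,b$ in the domain and all $r\in[0,1]$. On $[0,\infty)$ (equivalently, on all of $\mathbb R$ if one wishes to allow $g$ to take negative values) this is immediate from $\varphi''(t) = -e^{-t} < 0$; when $a$ or $b$ equals $\infty$ it is checked directly using $\varphi(\infty)=1$ and $\varphi \le 1$.

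With these two facts in hand, the conclusion follows from a three-step chain of inequalities: for $x,y\in D$ and $r\in[0,1]$, concavity of $g$ gives $g(rx+(1-r)y) \ge rg(x)+(1-r)g(y)$; applying the non-decreasing map $\varphi$ preserves this, giving $\varphi(g(rx+(1-r)y)) \ge \varphi(rg(x)+(1-r)g(y))$; and concavity of $\varphi$ gives $\varphi(rg(x)+(1-r)g(y)) \ge r\varphi(g(x))+(1-r)\varphi(g(y))$. Reading off the two ends of the chain yields $q(rx+(1-r)y) \ge rq(x)+(1-r)q(y)$, which is exactly the asserted concavity of $q$.

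There is no genuine obstacle here; the only point requiring any care is the arithmetic with the symbol $\infty$, which is completely governed by the conventions in Subsection~\ref{sec:EV} together with the observation that both the monotonicity and the concavity inequality for $\varphi$ persist on the compactified half-line $[0,\infty]$.
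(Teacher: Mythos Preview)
Your argument is correct: writing $q=\varphi\circ g$ with $\varphi(t)=1-e^{-t}$ and using that a non-decreasing concave function composed with a concave function is concave is exactly the standard proof, and your handling of the value $\infty$ via the conventions of Subsection~\ref{sec:EV} is fine. The paper does not actually supply its own proof of this lemma---it declares it elementary and refers the reader to \cite[Chapter~6]{Dudley2002Real}---so there is nothing to compare against; your write-up simply fills in what the paper leaves to the reader.
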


\subsection{Continuity theorem for the Laplace functional of random measures}
	In this subsection, we discuss the continuity theorem for finite random measures on Polish space.	
	The following result is not new.
	We included it here for the sake of completeness.
	Let $E$ be a Polish space.
	Denote by $\mathcal M_f(E)$ the collection of all the finite Borel measures on $E$ equipped with the topology of weak convergence.
	According to \cite[Lemma 4.5]{Kallenberg2017Random}, $\mathcal M_f(E)$ is a Polish space.
\begin{lem} \label{fact:WC}
	Let $(\mathbf P_n)_{n\in \mathbb N}$ be a sequence of probabilities on $\mathcal M_f(E)$.
	Suppose that
	(1) for each $f \in \mathcal B_b(E,[0,\infty))$, limit $Lf := \lim_{n\to \infty}\mathscr L_{\mathbf P_n}f$ exists; and
	(2) for each $f_n \downarrow f$ pointwisely in $\mathcal B_b(E,[0,\infty))$, $Lf_n \downarrow Lf$.
	Then there exist an unique probability $\mathbf Q$ on $\mathcal M_f(E)$
	such that $(\mathbf P_n)_{n \in \mathbb N}$ converges weakly
to $\mathbf Q$ and  $\mathscr L_\mathbf Q = L$ on $\mathcal B_b(E, [0,\infty))$.
\end{lem}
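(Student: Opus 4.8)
The plan is to derive uniqueness from the fact that log-Laplace functionals determine laws on $\mathcal M_f(E)$, to produce $\mathbf Q$ as a weak limit of $(\mathbf P_n)$ via a tightness argument, and then to upgrade the identity $\mathscr L_{\mathbf Q}=L$ from continuous to all bounded Borel test functions. Uniqueness is immediate: a probability on $\mathcal M_f(E)$ is determined by the restriction of its log-Laplace functional to $C_b(E,[0,\infty))$ (this is \cite[Theorem 1.17]{Li2011MeasureValued}; equivalently, for Polish $E$ the Borel $\sigma$-field of $\mathcal M_f(E)$ is generated by the evaluations $\mu\mapsto\mu(g)$, $g\in C_b(E,[0,\infty))$), so at most one $\mathbf Q$ can satisfy the conclusion.

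For existence, I would first show that $(\mathbf P_n)$ is tight in $\mathcal M_f(E)$. Since $c\mathbf 1_E\downarrow 0$ as $c\downarrow 0$ and $\mathscr L_{\mathbf P_n}$ of the zero function vanishes, hypothesis~(2) gives $L(c\mathbf 1_E)\to 0$ as $c\downarrow 0$; from $\mathscr L_{\mathbf P_n}(c\mathbf 1_E)=-\log\int e^{-c\|\mu\|}\,\mathbf P_n(d\mu)$ and the pointwise bound $\mathbf 1_{\{\|\mu\|>R\}}\le(1-e^{-c\|\mu\|})/(1-e^{-cR})$ one obtains $\limsup_n\mathbf P_n(\|\mu\|>R)\le(1-e^{-L(c\mathbf 1_E)})/(1-e^{-cR})$, which is made arbitrarily small by taking $c$ small and then $R$ large, so the total masses are tight. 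For spatial tightness, fix a complete metric on $E$ and a countable dense set $\{x_i\}$, put $U^\rho_m:=\bigcup_{i\le m}B(x_i,\rho)$, and note that $\mathbf 1_{E\setminus U^\rho_m}\downarrow 0$ as $m\to\infty$, whence $L(t\mathbf 1_{E\setminus U^\rho_m})\to 0$ by~(1) and~(2); the same Chebyshev-type estimate applied to $f=t\mathbf 1_{E\setminus U^\rho_m}$ gives, for each $\rho,\eta>0$, a finite $m$ with $\sup_n\mathbf P_n(\mu(E\setminus U^\rho_m)>\eta)$ as small as desired, and intersecting the closures $\overline{U^{1/k}_{m_k}}$ over $k$ with the parameters chosen summable in $k$ produces a compact set $K$ (closed and totally bounded, by completeness) with $\sup_n\mathbf P_n(\mu(E\setminus K)>\eta)$ arbitrarily small. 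Together with the mass bound this gives tightness, so by Prohorov's theorem $(\mathbf P_n)$ is relatively weakly compact; any subsequential limit $\mathbf Q$ satisfies $\mathscr L_{\mathbf Q}g=\lim_k\mathscr L_{\mathbf P_{n_k}}g=Lg$ for $g\in C_b(E,[0,\infty))$ (because $\mu\mapsto e^{-\mu(g)}$ is then bounded continuous on $\mathcal M_f(E)$), so by the uniqueness step all subsequential limits coincide and $\mathbf P_n\xrightarrow{w}\mathbf Q$ with $\mathscr L_{\mathbf Q}=L$ on $C_b(E,[0,\infty))$.

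It remains to upgrade $\mathscr L_{\mathbf Q}=L$ to all $f\in\mathcal B_b(E,[0,\infty))$, and this is the hard part: weak convergence does not by itself control $\int e^{-\mu(f)}\,\mathbf P_n(d\mu)$ when $f$ is discontinuous, so one must exploit hypothesis~(1). Given $f$ and $\varepsilon>0$, I would use the mass tightness to fix $R$ (outside the at most countably many atoms of $\mu\mapsto\|\mu\|$ under $\mathbf Q$) with $\sup_n\mathbf P_n(\|\mu\|>R)+\mathbf Q(\|\mu\|>R)<\varepsilon$; the truncated intensity measures $\bar{\mathbf P}^R_n(\cdot):=\int_{\{\|\mu\|\le R\}}\mu(\cdot)\,\mathbf P_n(d\mu)$ have total mass $\le R$ and converge weakly on $E$ to $\bar{\mathbf Q}^R$, hence are tight; applying Lusin's theorem on a suitable common compact set and Tietze's extension theorem one finds $\tilde f\in C_b(E,[0,\|f\|_\infty])$ with $\sup_n\bar{\mathbf P}^R_n(\{f\ne\tilde f\})+\bar{\mathbf Q}^R(\{f\ne\tilde f\})$ small, and then, using $|e^{-\mu(f)}-e^{-\mu(\tilde f)}|\le\mu(|f-\tilde f|)\wedge 1\le 2\|f\|_\infty\,\mu(\{f\ne\tilde f\})+\mathbf 1_{\{\|\mu\|>R\}}$, the quantities $\mathscr L_{\mathbf P_n}f$ and $\mathscr L_{\mathbf P_n}\tilde f$, respectively $\mathscr L_{\mathbf Q}f$ and $\mathscr L_{\mathbf Q}\tilde f$, differ by an amount small uniformly in $n$; combining with $\mathscr L_{\mathbf P_n}\tilde f\to\mathscr L_{\mathbf Q}\tilde f$ (continuity of $\tilde f$) and $\mathscr L_{\mathbf P_n}f\to Lf$ (hypothesis~(1)) and then letting $\varepsilon\to 0$ gives $Lf=\mathscr L_{\mathbf Q}f$. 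The most delicate point I expect to be the uniform-in-$n$ control of $\bar{\mathbf P}^R_n(\{f\ne\tilde f\})$, which requires the $\bar{\mathbf P}^R_n$ to lose little mass on the (small, but possibly dense) exceptional set; one can also sidestep the whole upgrade by citing a characterization of the Laplace functionals of finite random measures on a Polish space (in the spirit of \cite[Theorem 1.18]{Li2011MeasureValued} and of the general theory in \cite{Kallenberg2017Random}): conditions~(1)--(2) force $f\mapsto e^{-Lf}$ to be the Laplace functional of a unique probability $\mathbf Q$ on $\mathcal M_f(E)$ with $\mathscr L_{\mathbf Q}=L$ on $\mathcal B_b(E,[0,\infty))$, and the weak convergence then follows from the matching on $C_b(E,[0,\infty))$ together with the tightness above.
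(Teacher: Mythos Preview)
Your overall plan is sound but takes a different route from the paper, and the main line of your argument has a real gap in the upgrade step.

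The paper's proof is short and runs in the opposite order from yours. It first observes that each $f\mapsto e^{-\mathscr L_{\mathbf P_n}f}$ is positive definite on $\mathcal B_b(E,[0,\infty))$ (by \cite[Theorem~3.3.3]{Dawson1992Infinitely}), hence so is the pointwise limit $f\mapsto e^{-Lf}$. Then it invokes a representation theorem, \cite[Corollary~(A.6)]{Fitzsimmons1989Construction}, which together with hypothesis~(2) produces a sub-probability $\mathbf Q$ on $\mathcal M_f(E)$ with $\int e^{-\mu(f)}\mathbf Q(d\mu)=e^{-Lf}$ for \emph{all} $f\in\mathcal B_b(E,[0,\infty))$; taking $f=0$ shows $\mathbf Q$ is a probability. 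Weak convergence then drops out of hypothesis~(1) and a Laplace-functional continuity theorem \cite[Theorem~1.18]{Li2011MeasureValued}. No tightness is proved by hand, and no upgrade from $C_b$ to $\mathcal B_b$ is needed because Fitzsimmons' result already works on $\mathcal B_b$.

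Your tightness/Prohorov argument for existence of a weak limit and its identification on $C_b(E,[0,\infty))$ is correct and self-contained, which is a virtue if one wants to avoid quoting the Dawson--Fitzsimmons machinery. The problem is your Lusin-based upgrade. You claim that from weak convergence $\bar{\mathbf P}^R_n\to\bar{\mathbf Q}^R$ one can find, via Lusin and Tietze, a continuous $\tilde f$ with $\sup_n\bar{\mathbf P}^R_n(\{f\ne\tilde f\})$ small. This is false in general: take $E=[0,1]$, $\bar{\mathbf P}^R_n=\delta_{1/n}\to\delta_0$, and $f=\mathbf 1_{\{0\}}$; any continuous $\tilde f$ close to $f$ at $0$ must, by continuity, be far from $f$ at $1/n$ for all large $n$, so $\bar{\mathbf P}^R_n(\{f\ne\tilde f\})=1$ for infinitely many $n$. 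Lusin's theorem is inherently a one-measure statement, and weak convergence does not give the uniform absolute continuity you would need here. Hypothesis~(1) is what rules out such pathology, but your argument does not use~(1) at this point, and it is not clear how to feed it into a Lusin scheme.

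You yourself flag this as the delicate point and then propose to sidestep it by citing a characterization of Laplace functionals of random measures. That alternative is exactly the paper's approach (positive definiteness plus Fitzsimmons), and once you take it, the tightness argument becomes unnecessary: the existence of $\mathbf Q$ with $\mathscr L_{\mathbf Q}=L$ on $\mathcal B_b$ comes first, and weak convergence follows directly from the matching on $C_b$. So your fallback is correct, but your primary argument should either be repaired with a genuinely different idea for the upgrade or replaced by the characterization route.
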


\begin{proof}
	We say a $[0,\infty)$-valued functional $\Gamma$ on $\mathcal B_b(E,[0,\infty))$ is positive definite if
\[
	\sum_{i,j =1}^n a_i a_j \Gamma (f_i + f_j)\geq 0
\]
for any $\mathbb R$-valued list $(a_k)_{k = 1}^n$ and $\mathcal B_b(E,[0,\infty))$-valued list $(f_k)_{k = 1}^n$.
	It is proved in \cite[Theorem 3.3.3]{Dawson1992Infinitely} that for any $n \in \mathbb N$, $f\mapsto e^{- \mathscr L_{\mathbf P_n}f}$ is positive definite on $\mathcal B_b(E,[0,\infty))$.
	Therefore, $f \mapsto e^{- L f}$ is positive definite.
	Now from \cite[Corollary (A.6)]{Fitzsimmons1989Construction} and the condition (2), we know that there exists a sub-probability $\mathbf Q$ on $\mathcal M_f(E)$ such that
\begin{equation}	\label{eq:WC.1}
	\int_{\mathcal M_f(E)}e^{-\mu(f) } \mathbf Q(d\mu) = e^{-Lf}, \quad f \in \mathcal B_b(E,[0,\infty)).
\end{equation}
	Taking $f = 0 \cdot \mathbf 1_E$ in condition (1) we get that $L(0\cdot \mathbf 1_E) = 0$.
	This says that $\mathbf Q$ is a probability on $\mathcal M_f(E)$.
	Now condition (1) and \cite[Theorem 1.8]{Li2011MeasureValued} imply that $(\mathbf P_n)_{n \in \mathbb N}$ convergence to $\mathbf Q$ weakly.
	Finally, \eqref{eq:WC.1} implies that $\mathscr L_{\mathbf Q} = L$ on $\mathcal B_b(E,[0,\infty))$.
\end{proof}

\begin{acknowledgment*}
	We thank Zenghu Li and Leonid Mytnik for helpful conversations.
	We also thank the two referees for helpful comments on the first version of this paper.
\end{acknowledgment*}

\end{document}